\newcommand{\unit}{\mathbbm 1}
\def\Z{{\mathbb Z}}
\newcommand{\N}{\mathbb{N}}
\newcommand{\eps}[0]{\varepsilon}
\def\fz{\infty}
\def\ls{\lesssim}
\def\gs{\gtrsim}
\def\dint{\displaystyle\int}
\def\dfrac{\displaystyle\frac}
\def\r{\right}
\def\lf{\left}
\newcommand{\R}{\mathbb{R}}
\newcommand{\mcd}{\mathcal{D}}
\newcommand{\ep}{\epsilon}
\newcommand{\lb}{\lambda}
\newcommand{\La}{\left\langle }
\newcommand{\Ra}{\right\rangle }
\newtheorem{thm}{Theorem}[section]
\newtheorem{lem}[thm]{Lemma}
\newtheorem{prop}[thm]{Proposition}
\newtheorem{cor}[thm]{Corollary}
\newtheorem{defn}[thm]{Definition}
\newtheorem{remark}[thm]{Remark}
\newcommand{\intav}{-\!\!\!\!\!\!\int}
\numberwithin{equation}{section}
\begin{document}

\arraycolsep=1pt

\title{\Large\bf Compactness of the Bloom sparse operators and applications}



\author{Peng Chen, Michael Lacey, Ji Li and Manasa N. Vempati}
%
%
\date{}

\maketitle

\begin{center}
\begin{minipage}{13.5cm}\small

{\noindent  {\bf Abstract:}\
We establish the characterization of compactness for the sparse operator (associated with symbol in weighted VMO space) in the two weight setting on the spaces of homogeneous type in the sense of Coifman and Weiss. As an application, we obtain the compactness characterization for the commutator of Calder\'on--Zygmund operators on the homogeneous spaces. Furthermore, our approach can be applied to proving compactness for sparse operators in the multi-linear setting.}

\end{minipage}
\end{center}

%
%



\section{Introduction and statement of main results\label{s1}}

In their remarkable result, Coifman--Rochberg--Weiss \cite{crw} showed that the commutator of Riesz transforms
is bounded on $L^p(\mathbb R^n)$ if and only if the symbol $b$ is in the BMO space. See also the characterization by Janson \cite{Ja} and compactness by Uchiyama \cite{u81}, and many subsequent results including \cite{CCHTW,HLY,HY,GHST,Ler1,LOR,LOR2,AGKM,Pxq,FPW}. Later, Bloom \cite{B} obtained the two weight version of the commutator of Hilbert transform $H$ with respect to weighted BMO space. To be more precise, for
$1 < p < \infty$, let $\lambda_{1}, \lambda_{2}$ be weights in the Muckenhoupt class $A_p$ and consider the weight $\nu = \lambda_{1}^{1/p} \lambda_{2}^{-1/p}$. Let $L^p_{w}(\R)$ denote the space of functions that are $p$ integrable relative to the measure $w(x)dx$.  Then, by \cite{B}, there
exist constants $0 < c < C < \infty$, depending only on $p, \lambda_{1}, \lambda_{2}$, such that
\begin{equation*}
c\| b \| _{{\rm BMO}_\nu(\R)} \le  \| [b, H] : L^p_{\lambda_{1}}(\R) \rightarrow L^p_{\lambda_{2}}(\R) \|  \le C \| b \| _{{\rm BMO}_\nu(\R)}
\end{equation*}
in which $[b, H] (f)(x) = b(x) H (f)(x) - H (bf)(x)$ denotes the commutator of the Hilbert transform $H$ and the function $b \in {\rm BMO}_\nu(\R)$, i.e., the Muckenhoupt--Wheeden weighted BMO space (introduced in \cite{MW76}, see also the definition in Section 2.4 below).  This result provided a characterization of the boundedness of the commutator $[b, H]:L^p_{\lambda_{1}}(\R ) \rightarrow L^p_{\lambda_{2}}(\R ) $ in terms of a triple of information $b,\lambda_{1}$ and $\lambda_{2}$.  This result was extended very recently to the commutator of Riesz transform $[b,R_j]$, $j=1,\ldots,n$, in $\R^n$ by Holmes--Lacey--Wick \cite{HLW} using a different method involving the representation theorem for the Riesz transforms. Recently, Lerner--Ombrosi--Rivera-R\'ios \cite{LOR} also proved this result by using the sparse domination and their method was later generalised to space of homogeneous type in \cite{DGKLWY}.

The compactness for $[b,H]$ (or $[b,R_j]$) in the Bloom setting was first obtained by the second and third authors \cite{LaLi}, which is essentially different from the unweighted setting as studied by Uchiyama \cite{U1}.  In the weighted case,  $C_0^\infty(\mathbb R^n)$ need not be  contained in the weight BMO space (and hence weighted VMO space) for $n\geq 2$.
The proof in \cite{LaLi}  relies on the split of Calder\'on--Zygmund operators into an essential part and the remainder, where the commutator of the remainder has operator norm arbitrarily small and the commutator of the essential part has finite range and hence compact.

\subsection{Statement of main results}
Inspired by the known result of the second author (\cite{La}) --pointwise domination of a Calder\'on--Zygmund operator via a corresponding sparse operator, and the sparse domination of commutator from Lerner--Ombrosi--Rivera-R\'ios \cite{LOR}, it is natural to study the compactness characterization for the Bloom sparse operators associated to the symbol in weighted VMO space.

Precisely, let $0<\eta<1$ and let $\mathcal S$ be an arbitrary $\eta$-sparse family of dyadic cubes on a space of homogeneous type $(X,d,\mu)$ (details will be provided in Section 2.5), such that $\mathcal S \subset \mathcal D$, where $\mathcal D$ is an arbitrary dyadic system in $X$ (as constructed by Christ \cite{Chr}). Suppose  $b\in {\rm BMO}_\nu(X)$ with $\nu\in A_2$. Recall that the Bloom sparse operator associated to $b$ and $\mathcal S$, $\mathcal T_{\mathcal S,b}$ is defined as follows (\cite{LOR})
\begin{align}\label{Bloom S}
 \mathcal T_{\mathcal S,b}(f)(x) =\sum_{Q\in \mathcal S} |b(x)-b_Q|f_Q \chi_Q(x), \quad\forall f\in L^1_{\rm loc}(X).
\end{align}

Along the line of \cite{LaLi}, consider the following question:


{\bf Q:} Suppose $p\in(1,\infty)$,  $\lambda_1,\lambda_2\in A_p$, $\nu:= \lambda_1^{1\over p}\lambda_2^{-{1\over p}}$, $b$ is in the weighted VMO space  ${\rm VMO}_{\nu}(X)$.

\hskip.57cm Is $\mathcal T_{\mathcal S,b}$   compact from $L^{p}_{\lambda_1}(X)$ to $L^p_{\lambda_2}(X)$? And is the converse true?

\bigskip

In this paper, we provide the answer to this question, giving our main result.
\begin{thm}\label{thm main3}
Let $p\in(1,\infty)$ and  $\lambda_1,\lambda_2\in A_p$, $\nu:= \lambda_1^{1\over p}\lambda_2^{-{1\over p}}$. Suppose $b\in {\rm BMO}_{\nu}(X)$.

\smallskip
\noindent {\rm(i)} If $b\in {\rm VMO}_{\nu}(X)$, then for every $0<\eta<1$ and for every $\eta$-sparse family $\mathcal S $ in $X$,  the Bloom sparse operator $\mathcal T_{\mathcal S,b}$ as given in \eqref{Bloom S} is compact from $L^{p}_{\lambda_1}(X)$ to $L^p_{\lambda_2}(X)$;

\smallskip
\noindent {\rm(ii)} If for every $0<\eta<1$ and for every $\eta$-sparse family $\mathcal S $ in $X$,  the Bloom sparse operator $\mathcal T_{\mathcal S,b}$ as given in \eqref{Bloom S} is compact from $L^{p}_{\lambda_1}(X)$ to $L^p_{\lambda_2}(X)$, then we deduce that $b\in {\rm VMO}_{\nu}(X)$.
\end{thm}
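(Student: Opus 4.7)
The plan is to prove (i) by approximation with finite-rank operators, and to prove (ii) by producing a weakly null sequence on which $\mathcal T_{\mathcal S,b}$ has norm bounded below.

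For Part (i), I would first observe that any truncation $\mathcal T_{\mathcal F,b}$ of the sparse operator to a finite sub-collection $\mathcal F\subset\mathcal S$ is finite-rank, since each summand $f\mapsto |b(\cdot)-b_Q|\chi_Q(\cdot) f_Q$ is a rank-one map from $L^p_{\lambda_1}(X)$ to $L^p_{\lambda_2}(X)$. Compactness of $\mathcal T_{\mathcal S,b}$ will therefore follow once, for every $\epsilon>0$, I can exhibit a finite $\mathcal F_\epsilon\subset\mathcal S$ with $\|\mathcal T_{\mathcal S\setminus\mathcal F_\epsilon,b}\|_{L^p_{\lambda_1}\to L^p_{\lambda_2}}<\epsilon$. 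Fix a reference point $x_0\in X$ and parameters $0<\delta<R<\infty$ and $M>0$. I would decompose $\mathcal S\setminus\mathcal F_\epsilon$ into three disjoint classes: the \emph{small} cubes (diameter $\le\delta$), the \emph{large} cubes (diameter $\ge R$), and the \emph{far} cubes (neither small nor large, but center at distance $\ge M$ from $x_0$), taking $\mathcal F_\epsilon$ to be the remaining \emph{core}. The volume-doubling together with the structure of Christ's dyadic system makes the core finite. The three tails are then controlled by combining the Bloom boundedness of the sparse operator (\cite{LOR}, see also \cite{HLW}) with the three weighted VMO decay conditions on $b$ over small, large, and far cubes, respectively.

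For Part (ii), I would argue by contradiction. Assume $b\in BMO_\nu(X)\setminus VMO_\nu(X)$; then at least one of the three VMO decay conditions fails, producing a sequence of cubes $\{Q_k\}$ -- either with diameter shrinking to zero, or with diameter tending to infinity, or escaping to spatial infinity -- on which the $\nu$-weighted mean oscillation of $b$ is bounded below by some $c>0$. Passing to a subsequence and exploiting doubling, I would arrange the $Q_k$ to be pairwise disjoint; then $\mathcal S:=\{Q_k\}_k$ (with $E_{Q_k}=Q_k$) is automatically an $\eta$-sparse family for any $\eta\in(0,1)$. I then choose normalized test functions $f_k$ supported on $Q_k$, of the form $f_k=\alpha_k(\operatorname{sgn}(b-b_{Q_k}))\chi_{Q_k}$ after an appropriate renormalization against $\lambda_1$, so that $\|f_k\|_{L^p_{\lambda_1}}=1$, $f_k\rightharpoonup 0$ weakly in $L^p_{\lambda_1}(X)$, and by disjointness together with the oscillation lower bound,
\[
\|\mathcal T_{\mathcal S,b}(f_k)\|_{L^p_{\lambda_2}(X)}\gtrsim c.
\]
Since a compact operator must map weakly null sequences to norm null sequences, we reach a contradiction.

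The main obstacle lies in Part (i): precisely converting the \emph{pointwise} VMO decay of $b$ (a statement about weighted mean oscillations on a single cube) into a \emph{uniform} operator-norm estimate on the tails $\mathcal T_{\mathcal S',b}$ over an infinite collection $\mathcal S'$ of small, large, or far cubes. This requires either a refined Bloom-type estimate for the sparse operator that is sensitive only to the oscillations of $b$ on the cubes appearing in the sum, or an explicit decomposition $b=b_{\mathcal S'}+r_{\mathcal S'}$ where $b_{\mathcal S'}$ has small $BMO_\nu$ norm and reproduces the partial sparse sums. The \emph{far-cube} case is the most delicate, because in a general space of homogeneous type the $A_p$ weights $\lambda_1,\lambda_2$ can grow significantly at infinity, forcing one to track the interplay between $\nu$ and the pairing weights $\lambda_1,\lambda_2$ on cubes that escape every compact region.
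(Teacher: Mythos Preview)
Your outline for Part~(i) matches the paper's decomposition (a finite ``core'' plus small, large, and far tails), but you have left the essential work undone, and you have misidentified which tail is hardest. The paper controls the \emph{small} and \emph{far} tails by invoking the sparse expansion
\[
|b(x)-b_Q|\le C\sum_{R\in\tilde{\mathcal S},\,R\subset Q}\Omega(b,R)\chi_R(x),
\]
which converts each term into a product of $\epsilon$ (from the VMO decay of $\Omega(b,R)/\nu(R)\cdot\mu(R)$) and a composition $\mathcal A_{\mathcal S}\big(\mathcal A_{\tilde{\mathcal S}}(|f|)\nu\big)$ of two bounded sparse operators. This is the missing ``refined Bloom-type estimate'' you allude to. The \emph{large}-cube tail ($Q\supset Q_N$) is in fact the most delicate, not the far tail: here the subcubes $R\subset Q$ in the expansion above need \emph{not} satisfy any VMO smallness, so the paper instead runs a telescoping chain $Q=Q_1\supset Q_2\supset\cdots\supset Q_N$ through the sparse ancestors, splits $Q$ into $Q_N$ and the ``side'' children $Q_{i,k}$ (which are disjoint from $Q_N$ and hence fall under the VMO hypothesis), and controls the resulting multi-scale sums by exploiting the reverse-doubling of sparse families together with the self-improvement $\lambda_2\in A_{p-\sigma}$. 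Without this chain argument your proposal for the large tail is incomplete.

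For Part~(ii) your route is genuinely different from the paper's and, once repaired, more direct. The paper does not argue on a hand-built sparse family at all; instead it shows that the maximal commutator $C_b$ is pointwise dominated by finitely many $\mathcal T_{\mathcal S_t,b}+\mathcal T_{\mathcal S_t,b}^*$, so compactness of all sparse operators forces $C_b$ compact, and then derives $b\in{\rm VMO}_\nu$ from compactness of $C_b$ by a separate contradiction argument. Your idea of taking $\mathcal S=\{Q_k\}$ directly is cleaner, but two points need fixing. First, the choice $f_k=\alpha_k\operatorname{sgn}(b-b_{Q_k})\chi_{Q_k}$ is wrong: since $\mathcal T_{\mathcal S,b}$ acts through the averages $f_Q$, and $\int_{Q_k}\operatorname{sgn}(b-b_{Q_k})$ can be arbitrarily small, you lose the lower bound. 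Take instead $f_k=\lambda_1(Q_k)^{-1/p}\chi_{Q_k}$, so that $(f_k)_{Q_k}=\lambda_1(Q_k)^{-1/p}$ and
\[
\|\mathcal T_{\mathcal S,b}f_k\|_{L^p_{\lambda_2}}^p\ \ge\ \frac{1}{\lambda_1(Q_k)}\int_{Q_k}|b-b_{Q_k}|^p\lambda_2\,d\mu\ \gtrsim\ c^p,
\]
the last step being the pointwise H\"older inequality combined with $\nu(Q)\gtrsim\lambda_1(Q)^{1/p}\lambda_2'(Q)^{1/p'}$ (which holds since $\nu\in A_2$). Second, you cannot always make the $Q_k$ pairwise disjoint (if the VMO failure occurs at a single point the bad balls are nested); but a nested sequence with geometric measure decay is still $\eta$-sparse with $E_{Q_k}=Q_k\setminus Q_{k+1}$, and $f_k\rightharpoonup 0$ still follows from $\mu(Q_k)\to 0$. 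You should also replace the balls $Q_k$ by comparable dyadic cubes via the adjacent systems, since $\mathcal S$ must sit inside some $\mathcal D$.
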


Our proof of Theorem \ref{thm main3} goes the following:
\begin{align}\label{key s}
b\in {\rm VMO}_{\nu}(X) \Rightarrow \forall \mathcal S, \mathcal T_{\mathcal S,b} {\rm\ compact} \Rightarrow [b,\mathcal T] {\rm\ compact}\xRightarrow[]{\mathcal  T \text{\ non-degenerate}} b\in {\rm VMO}_{\nu}(X),
\end{align}
where $\mathcal T$ is a standard Calder\'on--Zygmund operator on spaces of homogeneous type. 

To be more precise, suppose $b\in {\rm VMO}_{\nu}(X)$, we will first procced to prove the Theorem \ref{thm main3} (i) by splitting
$\mathcal T_{\mathcal S,b}$ into an essential part and a remainder term, where we will show that the remainder has norm sufficiently small and the essential part has a finite range and hence is compact. Next, we prove that if for all  sparse families we have, $\mathcal T_{\mathcal S,b}$ is compact from $L^{p}_{\lambda_1}(X)$ to $L^p_{\lambda_2}(X)$, then  $[b,\mathcal T]$ is also compact from $L^{p}_{\lambda_1}(X)$ to $L^p_{\lambda_2}(X)$, and hence {\rm  Theorem \ref{thm main3} (ii) } holds. 


\subsection{Applications}



As a direct application of our Theorem \ref{thm main3}, we obtain the compactness results for the sparse operator constructed by \cite{ou} in the multilinear weighted setting. The Bloom type bilinear sparse operators  $\mathcal T^B_{\mathcal S,b}$ and
$\mathcal T^{B,*}_{\mathcal S,b}$ associated to $b\in {\rm VMO}_\nu(X)$,  and $\mathcal S$, are defined as
\begin{align}\label{Bilinear Bloom S}
 &\mathcal T^B_{\mathcal S,b}(f,g)(x) =\sum_{Q\in \mathcal S} |b(x)-b_Q|f_Q g_Q \chi_Q(x), \\
 &\mathcal T^{B,*}_{\mathcal S,b}(f,g)(x) =\sum_{Q\in \mathcal S} \frac{1}{\mu(Q)}\int_Q |(b(x)-b_Q)f(x)|d\mu(x) g_Q \chi_Q(x), \quad\forall f,g\in L^1_{\rm loc}(X).
\end{align}


Our main application to  bilinear sparse operators is as follows.
\begin{thm}\label{thm bilinear}
Let $p_1,p_2\in(1,\infty)$, $1/p=1/p_1+1/p_2$ and  $\lambda_1,\lambda_2\in A_{p_1}$, $w\in A_{p_2}$, $\nu:= \lambda_1^{1\over {p_1}}\lambda_2^{-{1\over {p_1}}}$,
and $\widehat w= \lambda_2^{{p\over {p_1}}} w^{{p\over {p_2}}}$.  If $b\in {\rm VMO}_{\nu}(X)$, then for every $0<\eta<1$ and for every $\eta$-sparse family $\mathcal S $ in $X$,  the bilinear Bloom sparse operator $\mathcal T^B_{\mathcal S,b}$ and $\mathcal T^{B,*}_{\mathcal S,b}$ as given in \eqref{Bilinear Bloom S} are compact from $L^{p_1}_{\lambda_1}(X)\times L^{p_2}_{w}(X)$ to $L^p_{\widehat w}(X)$.
\end{thm}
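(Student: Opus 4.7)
The plan is to adapt the essential-plus-remainder decomposition used in Theorem \ref{thm main3}(i) to the bilinear setting, carrying it out uniformly for both $\mathcal T^B_{\mathcal S,b}$ and $\mathcal T^{B,*}_{\mathcal S,b}$. Given $\epsilon>0$, the hypothesis $b\in\mathrm{VMO}_\nu(X)$ produces $0<r<R<\infty$ and $N>0$ such that
\[
\tfrac{1}{\nu(Q)}\int_Q |b-b_Q|\,d\mu<\epsilon
\]
for every cube $Q$ with $\operatorname{diam}(Q)<r$, $\operatorname{diam}(Q)>R$, or $Q\cap B(x_0,N)=\emptyset$. Split $\mathcal S=\mathcal S_{\mathrm{ess}}\sqcup\mathcal S_{\mathrm{bad}}$ accordingly, where $\mathcal S_{\mathrm{ess}}$ collects the cubes with $r\le\operatorname{diam}(Q)\le R$ and $Q\cap B(x_0,N)\neq\emptyset$. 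The doubling property of $\mu$ together with $\eta$-sparsity forces $\mathcal S_{\mathrm{ess}}$ to be finite.

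For the bad part, I aim to establish a bilinear Bloom estimate of the form
\[
\|\mathcal T^B_{\mathcal S_{\mathrm{bad}},b}(f,g)\|_{L^p_{\widehat w}}\lesssim \Bigl(\sup_{Q\in\mathcal S_{\mathrm{bad}}}\tfrac{1}{\nu(Q)}\int_Q |b-b_Q|\,d\mu\Bigr)\|f\|_{L^{p_1}_{\lambda_1}}\|g\|_{L^{p_2}_w}\lesssim \epsilon \|f\|_{L^{p_1}_{\lambda_1}}\|g\|_{L^{p_2}_w},
\]
and likewise for $\mathcal T^{B,*}_{\mathcal S_{\mathrm{bad}},b}$ (using H\"older together with a John--Nirenberg-type inequality for $\mathrm{BMO}_\nu$ to dominate the sublinear functional $f\mapsto\frac{1}{\mu(Q)}\int_Q |(b-b_Q)f|\,d\mu$). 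I would prove this by revisiting the weighted bilinear sparse domination from \cite{ou} and tracking which cubes actually contribute: the relevant Carleson embedding together with the $A_{p_1},A_{p_2}$ hypotheses on $\lambda_1,\lambda_2,w$ ensures only oscillations over $\mathcal S_{\mathrm{bad}}$ appear as coefficients, so the restricted supremum suffices in place of $\|b\|_{\mathrm{BMO}_\nu}$. This is the main obstacle: it cannot be obtained by quoting a black-box bilinear Bloom theorem, and requires propagating the localization of oscillations through the full sparse argument.

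For the essential part, finiteness of $\mathcal S_{\mathrm{ess}}$ gives
\[
\mathcal T^B_{\mathcal S_{\mathrm{ess}},b}(f,g)(x)=\sum_{Q\in\mathcal S_{\mathrm{ess}}}\varphi_Q(x)\,\ell_{1,Q}(f)\,\ell_{2,Q}(g),
\]
with $\varphi_Q:=|b-b_Q|\chi_Q\in L^p_{\widehat w}(X)$ (using the local integrability of $b$, the John--Nirenberg inequality for $\mathrm{BMO}_\nu$, and the identity $\widehat w=\lambda_2^{p/p_1}w^{p/p_2}$) and bounded linear functionals $\ell_{1,Q}(f)=f_Q$, $\ell_{2,Q}(g)=g_Q$. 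Each summand is a rank-one bilinear map sending $B_{L^{p_1}_{\lambda_1}}\times B_{L^{p_2}_w}$ into a bounded subset of $\R\cdot\varphi_Q$, which is relatively compact, and a finite sum of compact bilinear operators is compact. The same reasoning handles $\mathcal T^{B,*}_{\mathcal S_{\mathrm{ess}},b}$ after replacing $\ell_{1,Q}(f)=f_Q$ by the bounded sublinear functional $f\mapsto\frac{1}{\mu(Q)}\int_Q|(b-b_Q)f|\,d\mu$. Letting $\epsilon\downarrow 0$ realizes $\mathcal T^B_{\mathcal S,b}$ and $\mathcal T^{B,*}_{\mathcal S,b}$ as operator-norm limits of compact bilinear operators, completing the proof.
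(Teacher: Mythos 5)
Your overall architecture matches the paper's: split $\mathcal S$ into a finite "essential" family and a "bad" remainder, show the essential part has finite rank and the remainder has small norm, then let $\epsilon\downarrow 0$. The essential-part argument (rank-one summands $\varphi_Q\otimes\ell_{1,Q}\otimes\ell_{2,Q}$, with $\varphi_Q\in L^p_{\widehat w}$ by weighted John--Nirenberg) is fine. The genuine gap is in the bad-part estimate.

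Your claimed bound
\[
\|\mathcal T^B_{\mathcal S_{\mathrm{bad}},b}(f,g)\|_{L^p_{\widehat w}}\lesssim \Bigl(\sup_{Q\in\mathcal S_{\mathrm{bad}}}\tfrac{1}{\nu(Q)}\int_Q |b-b_Q|\,d\mu\Bigr)\|f\|_{L^{p_1}_{\lambda_1}}\|g\|_{L^{p_2}_w}
\]
does not follow by "propagating localization through the sparse argument" for \emph{all} of $\mathcal S_{\mathrm{bad}}$, and this is exactly where the real work is. The standard sparse mechanism for handling $|b(x)-b_Q|$ is Lemma~\ref{lem b-bQ}: $|b(x)-b_Q|\lesssim\sum_{R\subset Q}\Omega(b,R)\chi_R(x)$, where $R$ ranges over a new sparse family of subcubes of $Q$. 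For far cubes ($Q\cap Q_N=\emptyset$) or small cubes ($l(Q)<\delta$), every subcube $R\subset Q$ is again far or small, so $\Omega(b,R)/\nu(R)\cdot\mu(R)<\epsilon$ and your restricted supremum controls things. But for large cubes ($Q\supset Q_N$), the subcubes $R\subset Q$ can be of arbitrary intermediate scale and location, where the VMO hypothesis gives no smallness. So the supremum over $\mathcal S_{\mathrm{bad}}$ alone cannot dominate those oscillations, and the estimate as stated is false (or at least unproven). The paper's proof handles exactly this case by a separate telescoping decomposition along the sparse chain $Q=Q_1\supset Q_2\supset\cdots\supset Q_{\tau_Q+1}=Q_N$, producing the four terms $I,II,III,IV$ whose coefficients involve oscillations only over the chain elements $Q_j$ (all large, hence small oscillation) and the lateral children $Q_{i,k}$ (all disjoint from $Q_N$, hence small oscillation), together with a reverse-doubling/Carleson summation along the chain. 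That chain telescope is the missing ingredient in your proposal.

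A secondary point you elide: the paper splits the proof into $p>1$ and $1/2<p\leq1$. The duality that you implicitly use to pass between $\mathcal T^B$ and $\mathcal T^{B,*}$, and the "norm limit of compact operators is compact" step, need the target space $L^p_{\widehat w}$ to be a Banach space; for $p\le 1$ it is only quasi-Banach, and the paper instead reduces that regime directly to the linear result and to pointwise maximal-function domination. You should either restrict to $p>1$ and handle $p\leq 1$ separately, or at least justify the limit argument in the quasi-Banach setting.
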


\begin{remark}
Follow the method in \eqref{key s}, we can also establish the reverse argument.

Suppose for every $0<\eta<1$ and for every $\eta$-sparse family $\mathcal S $ in $X$,  the bilinear Bloom sparse operator $\mathcal T^B_{\mathcal S,b}$ and $\mathcal T^{B,*}_{\mathcal S,b}$ are compact from $L^{p_1}_{\lambda_1}(X)\times L^{p_2}_{w}(X)$ to $L^p_{\widehat w}(X)$.  Then $[b,\mathcal T]_1(f,g)(x):=b(x)\mathcal T(f,g)(x)-\mathcal T(bf,g)(x)$ is compact from $L^{p_1}_{\lambda_1}(X)\times L^{p_2}_{w}(X)$ to $L^p_{\widehat w}(X)$ for all bilinear Calder\'on--Zygmund operators $\mathcal T$ on $X$ {\rm(}the same argument holds for $[b,\mathcal T]_2(f,g)(x)${\rm)}.  Thus, if we further assume that $\mathcal T$ is non-degenerate, then we obtain that $b\in {\rm VMO}_{\nu}(X)$.  For the details, we omit here.
\end{remark}

Throughout this paper we assume that $\mu(X)=\infty$ and that $\mu(\{x_0\})=0$ for every $x_0\in X$.
Also, we denote by $C$ and $\widetilde{C}$ positive constants which
are independent of the main parameters, but they may vary from line to
line. For every $p\in(1, \fz)$, we denote by $p'$ the conjugate of $p$, i.e., $\frac{1}{p'}+\frac{1}{p}=1$.  If $f\le Cg$ or $f\ge Cg$, we then write $f\ls g$ or $g\gs f$;
and if $f \ls g\ls f$, we  write $f\approx g.$

\section{Preliminaries on Spaces of Homogeneous Type}
\label{s2}
\noindent

We say
that $(X,d,\mu)$ is a {space of homogeneous type} in the
sense of Coifman and Weiss if $d$ is a quasi-metric on~$X$
and $\mu$ is a nonzero measure satisfying the doubling
condition. A \emph{quasi-metric}~$d$ on a set~$X$ is a
function $d: X\times X\longrightarrow[0,\infty)$ satisfying
(i) $d(x,y) = d(y,x) \geq 0$ for all $x$, $y\in X$; (ii)
$d(x,y) = 0$ if and only if $x = y$; and (iii) the
\emph{quasi-triangle inequality}: there is a constant $A_0\in
[1,\infty)$ such that for all $x$, $y$, $z\in X$, 
\begin{eqnarray}\label{eqn:quasitriangleineq}
    d(x,y)
    \leq A_0 [d(x,z) + d(z,y)].
\end{eqnarray}

For any quasi-metric space $(X, d)$ that satisfies the
\textit{geometric doubling property}, there exists a
positive integer $\tilde A_0\in \N$ such that any open ball
$B(x,r):=\{y\in X\colon d(x,y)<r\}$ of radius $r>0$ can be
covered by at most $\tilde A_0$ balls $B(x_i,r/2)$ of radius $r/2$. We say that a nonzero measure $\mu$ satisfies the
\emph{doubling condition} if there is a constant $C_\mu$ such
that for all $x\in X$ and $r > 0$,
\begin{eqnarray}\label{doubling condition}
   \mu(B(x,2r))
   \leq C_\mu \mu(B(x,r))
   < \infty,
\end{eqnarray}
where $B(x,r)$ is the quasi-metric ball by $B(x,r) := \{y\in X: d(x,y)
< r\}$ for $x\in X$ and $r > 0$.  We point out that the doubling condition (\ref{doubling
condition}) implies that there exists a positive constant
$n$ (the \emph{upper dimension} of~$\mu$)  such
that for all $x\in X$, $\lambda\geq 1$ and $r > 0$,
\begin{eqnarray}\label{upper dimension}
    \mu(B(x, \lambda r))
    \leq  C_\mu\lambda^{n} \mu(B(x,r)).
\end{eqnarray}


A subset $\Omega\subseteq X$ is \emph{open} (in the topology
induced by $d$) if for every $x\in\Omega$ there exists
$\eps>0$ such that $B(x,\eps)\subseteq\Omega$. A subset
$F\subseteq X$ is \emph{closed} if its complement $X\setminus
F$ is open. The usual proof of the fact that $F\subseteq X$ is
closed, if and only if it contains its limit points, carries
over to the quasi-metric spaces. However, some open balls
$B(x,r)$ may fail to be open sets, see \cite[Sec 2.1]{HK}.

Constants that depend only on $A_0$ (the quasi-metric constant)
and
$\tilde A_0$ (the geometric doubling constant)
are referred to as
\textit{geometric constants}.

\subsection{A System of Dyadic Cubes}\label{sec:dyadic_cubes}

We recall from \cite{HK} (see also the previous work by M.~Christ \cite{Chr}, as
well as Sawyer--Wheeden~\cite{SawW}) the system of dyadic cubes.
In a geometrically doubling quasi-metric space $(X,d)$, a
countable family
\[
    \mathscr{D}
    = \bigcup_{k\in\Z}\mathscr{D}_k, \quad
    \mathscr{D}_k
    =\{Q^k_\alpha\colon \alpha\in \mathscr{A}_k\},
\]
of Borel sets $Q^k_\alpha\subseteq X$ is called \textit{a
system of dyadic cubes with parameters} $\delta\in (0,1)$ and
$0<c_1\leq C_1<\infty$ if it has the following properties:
\begin{equation}\label{eq:cover}
    X
    = \bigcup_{\alpha\in \mathscr{A}_k} Q^k_{\alpha}
    \quad\text{(disjoint union) for all}~k\in\Z;
\end{equation}
\begin{equation}\label{eq:nested}
    \text{if }\ell\geq k\text{, then either }
        Q^{\ell}_{\beta}\subseteq Q^k_{\alpha}\text{ or }
        Q^k_{\alpha}\cap Q^{\ell}_{\beta}=\emptyset;
\end{equation}
\begin{equation}\label{eq:dyadicparent}
   \text{for each }(k,\alpha)\text{ and each } \ell\leq k,
    \text{ there exists a unique } \beta
    \text{ such that }Q^{k}_{\alpha}\subseteq Q^\ell_{\beta};
\end{equation}
\begin{equation}\label{eq:children}
\begin{split}
    & \text{for each $(k,\alpha)$ there exists at most $M$
        (a fixed geometric constant)  $\beta$ such that }  \\
    & Q^{k+1}_{\beta}\subseteq Q^k_{\alpha}, \text{ and }
        Q^k_\alpha =\bigcup_{\substack{Q\in\mathscr{D}_{k+1}\\
    Q\subseteq Q^k_{\alpha}}}Q;
\end{split}
\end{equation}
\begin{equation}\label{eq:contain}
    B(x^k_{\alpha},c_1\delta^k)
    \subseteq Q^k_{\alpha}\subseteq B(x^k_{\alpha},C_1\delta^k)
    =: B(Q^k_{\alpha});
\end{equation}
\begin{equation}\label{eq:monotone}
   \text{if }\ell\geq k\text{ and }
   Q^{\ell}_{\beta}\subseteq Q^k_{\alpha}\text{, then }
   B(Q^{\ell}_{\beta})\subseteq B(Q^k_{\alpha}).
\end{equation}
The set $Q^k_\alpha$ is called a \textit{dyadic cube of
generation} $k$ with center point $x^k_\alpha\in Q^k_\alpha$
and side length~$\delta^k$. The interior and closure of
$Q^k_\alpha$ are denoted by $\widetilde{Q}^k_{\alpha}$ and
$\bar{Q}^k_{\alpha}$, respectively.

\subsection{Adjacent Systems of Dyadic Cubes}

In a geometrically doubling quasi-metric space $(X,d)$, a
finite collection $\{\mathscr{D}^t\colon t=1,2,\ldots ,T\}$ of
families $\mathscr{D}^t$ is called a \textit{collection of
adjacent systems of dyadic cubes with parameters} $\delta\in
(0,1), 0<c_1\leq C_1<\infty$ and $1\leq C<\infty$ if it has the
following properties: individually, each $\mathscr{D}^t$ is a
system of dyadic cubes with parameters $\delta\in (0,1)$ and $0
< c_1 \leq C_1 < \infty$; collectively, for each ball
$B(x,r)\subseteq X$ with $\delta^{k+3}<r\leq\delta^{k+2},
k\in\Z$, there exist $t \in \{1, 2, \ldots, T\}$ and
$Q\in\mathscr{D}^t$ of generation $k$ and with center point
$^tx^k_\alpha$ such that $\rho(x,{}^tx_\alpha^k) <
2A_0\delta^{k}$ and
\begin{equation}\label{eq:ball;included}
    B(x,r)\subseteq Q\subseteq B(x,Cr).
\end{equation}

We recall from \cite{HK} the following construction.

\begin{thm}\label{thm:existence2}
    Let $(X,d)$ be a geometrically doubling quasi-metric space.
    Then there exists a collection $\{\mathscr{D}^t\colon
    t = 1,2,\ldots ,T\}$ of adjacent systems of dyadic cubes with
    parameters $\delta\in (0, (96A_0^6)^{-1}), c_1 = (12A_0^4)^{-1},
    C_1 = 4A_0^2$ and $C = 8A_0^3\delta^{-3}$. The center points
    $^tx^k_\alpha$ of the cubes $Q\in\mathscr{D}^t_k$ have, for each
    $t\in\{1,2,\ldots,T\}$, the two properties
    \begin{equation*}
        \rho(^tx_{\alpha}^k, {}^tx_{\beta}^k)
        \geq (4A_0^2)^{-1}\delta^k\quad(\alpha\neq\beta),\qquad
        \min_{\alpha}\rho(x,{}^tx^k_{\alpha})
        < 2A_0\delta^k\quad \text{for all}~x\in X.
    \end{equation*}
  \end{thm}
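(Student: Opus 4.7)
The plan is to follow the construction of Hyt\"onen--Kairema, whose skeleton has two ingredients: (a) build one system of dyadic cubes from a nested sequence of maximal $\delta^k$-separated nets, and (b) construct finitely many such systems so that any ball of radius comparable to $\delta^k$ sits inside a cube of at least one of them. The quantitative choice of $\delta<(96A_0^6)^{-1}$ will arise from repeated use of the quasi-triangle inequality with constant $A_0$; the bound $T$ on the number of systems will come from the geometric doubling constant $\tilde A_0$.

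First I would construct, for a fixed starting parameter $t\in\{1,\dots,T\}$, the centers. For each $k\in\mathbb{Z}$, use Zorn's lemma to pick a maximal set $\{{}^tx^k_\alpha\}_\alpha\subseteq X$ of points that are pairwise $(4A_0^2)^{-1}\delta^k$-separated and such that $\min_\alpha \rho(x,{}^tx^k_\alpha)<2A_0\delta^k$ for every $x\in X$; the two displayed properties of the centers are then immediate from maximality and the separation. To arrange that refinement is compatible with these nets across scales, I would build them inductively from large $k$ to small, forcing each center at generation $k+1$ to inherit a unique ancestor at generation $k$; this is where the geometric doubling bound enters to ensure boundedly many descendants ($M$ a geometric constant, as required in \eqref{eq:children}).

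Next I would turn the nets into cubes $Q^k_\alpha$ by a Voronoi-type partition: assign each ${}^tx^{k+1}_\beta$ to the $\alpha$ that minimizes $\rho({}^tx^{k+1}_\beta,{}^tx^k_\alpha)$ (with a fixed global tie-breaking rule), then let $Q^k_\alpha$ be the closure in the partial order of the initial assignment of ${}^tx^k_\alpha$ to itself, i.e.\ the union of all cells at all finer generations whose centers eventually flow into ${}^tx^k_\alpha$. The covering, disjointness, nestedness and finitely many children properties \eqref{eq:cover}--\eqref{eq:children} are then verified from the construction, and the two-sided ball containment \eqref{eq:contain} with $c_1=(12A_0^4)^{-1}$ and $C_1=4A_0^2$ follows from a routine but careful geometric series argument using the quasi-triangle inequality at each scale; the constraint $\delta<(96A_0^6)^{-1}$ is exactly what keeps the geometric series bounding the inner ball from collapsing to the outer one. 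Monotonicity \eqref{eq:monotone} then follows from $C_1\delta^\ell\le C_1\delta^k$ and the center-to-center estimate for ancestors.

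The main obstacle, and the one that forces the finite family $\{\mathscr{D}^t\}_{t=1}^T$ rather than a single system, is the adjacent covering property \eqref{eq:ball;included}: an arbitrary ball $B(x,r)$ with $\delta^{k+3}<r\le \delta^{k+2}$ may straddle the boundary between cubes of one fixed system, so no single $\mathscr{D}$ suffices. To overcome this I would run the construction above $T$ times, starting from judiciously shifted initial nets; concretely, the initial points of the different systems are chosen to form a net in $X$ with the property that for every $x\in X$ and every generation $k$ there is some $t$ with a center ${}^tx^k_\alpha$ within distance $2A_0\delta^{k}$ (or less) of $x$ but simultaneously with $B(x,r)$ well inside $B({}^tx^k_\alpha, C_1\delta^k)$. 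The number $T$ of shifts needed is controlled by $\tilde A_0$ via the geometric doubling property, since one only has to cover the $O(1)$-many possible relative positions of $x$ with respect to grids of mesh $\delta^k$. Once such a $t$ is selected, one takes $Q\in\mathscr{D}^t$ to be the unique cube of generation $k$ containing $x$, and then $B(x,r)\subseteq Q\subseteq B(x,Cr)$ with $C=8A_0^3\delta^{-3}$ drops out of the size estimates $r\le \delta^{k+2}$ and $Q\subseteq B({}^tx^k_\alpha,C_1\delta^k)$ together with the quasi-triangle inequality.
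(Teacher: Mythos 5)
The paper does not actually prove Theorem \ref{thm:existence2}: it is recalled verbatim from Hyt\"onen--Kairema \cite{HK} (``We recall from \cite{HK} the following construction''), so there is no ``paper's own proof'' to compare against. Your sketch is a recognizable outline of the Hyt\"onen--Kairema argument --- maximal $\delta^k$-separated nets at each scale compatible with a parent relation, cubes defined by accumulating finer-scale cells that flow to a given center, and finitely many shifted copies to obtain the adjacency property --- and the broad strokes match the cited construction.

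That said, two steps in your sketch are treated too loosely for the quantitative statement you are trying to prove. First, the ``Voronoi-type partition + closure in the partial order'' step conceals the main technical work of \cite{HK}: one must verify that the accumulated cells are Borel, genuinely partition $X$ at each generation, are monotone in $k$, and satisfy the two-sided ball containment with the \emph{specific} constants $c_1 = (12A_0^4)^{-1}$ and $C_1 = 4A_0^2$. These constants come out of an explicit geometric-series estimate, and the threshold $\delta < (96A_0^6)^{-1}$ is exactly what makes the series terminate with the right margin; asserting ``routine but careful'' does not yield the claimed numbers. Second, the construction of the $T$ adjacent systems in \cite{HK} is not simply ``$T$ independent runs from judiciously shifted nets'': the shifts must be chosen jointly so that for \emph{every} ball $B(x,r)$ with $\delta^{k+3} < r \le \delta^{k+2}$ some system has a generation-$k$ cube squeezing $B(x,r)$ as in \eqref{eq:ball;included}, and the counting that gives $T \leq A_1^6(A_0^4/\delta)^{\log_2 \tilde A_0}$ (cf.\ \eqref{eq:upperbound}) is nontrivial. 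If you intend to present this as a self-contained proof rather than a pointer to \cite{HK}, those two points are where the real work lies.
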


We recall from \cite[Remark 2.8]{KLPW}
that the number $T$ of the adjacent systems of dyadic
    cubes as in the theorem above satisfies the estimate
    \begin{equation}\label{eq:upperbound}
        T
        = T(A_0,\tilde A_0,\delta)
        \leq A_1^6(A_0^4/\delta)^{\log_2\tilde A_0}.
    \end{equation}

Also, we recall the following result on the smallness of the
boundary.
\begin{prop}
    Suppose that $144A_0^8\delta\leq 1$. Let $\mu$ be a
    positive $\sigma$-finite measure on $X$. Then the
    collection $\{\mathscr{D}^t\colon t=1,2,\ldots ,T\}$ may be
    chosen to have the additional property that $\mu(\partial Q) = 0$ for all $Q\in\bigcup_{t=1}^{T}\mathscr{D}^t.$
\end{prop}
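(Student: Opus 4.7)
The plan is to exploit the flexibility left in the construction of Theorem~\ref{thm:existence2}: the cubes in each system $\mathscr{D}^t$ arise from a Voronoi-type assignment of points of $X$ to the centers $\{{}^{t}x_\alpha^{k}\}$, so once the centers are fixed the topological boundary $\partial Q^{k}_{\alpha}$ is contained in a countable union of equidistant sets
\[
S_{\alpha,\beta}^{k}
:= \bigl\{ x \in X : d(x,{}^{t}x_\alpha^{k}) = d(x,{}^{t}x_\beta^{k}) \bigr\},
\]
together with boundaries of cubes at coarser generations inherited through the nesting~\eqref{eq:nested}. The goal I would aim for is therefore the single reduction: choose the centers so that $\mu(S_{\alpha,\beta}^{k}) = 0$ for every admissible triple $(k,\alpha,\beta)$ and every $t\in\{1,\ldots,T\}$.

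I would produce such a choice via a Fubini/randomization argument. Introduce a probability space $(\Omega,\mathbb{P})$ that randomly perturbs each center ${}^{t}x_\alpha^{k}$ within a ball of radius strictly less than $c_{1}\delta^{k}$, the perturbation at scale $k$ being drawn from a distribution absolutely continuous with respect to $\mu$ (or any reference measure without atoms on hypersurfaces). The hypothesis $144A_{0}^{8}\delta \le 1$ leaves enough geometric slack that such perturbations remain compatible with the containments~\eqref{eq:contain}--\eqref{eq:monotone} and with the adjacency inequalities of Theorem~\ref{thm:existence2}. For fixed $x \in X$ and a fixed pair of neighboring centers, the event ``$x$ is equidistant from the two perturbed centers'' is codimension one in the parameter, hence $\mathbb{P}$-null. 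Fubini then yields $\mathbb{E}_{\omega}\,\mu(S_{\alpha,\beta}^{k}(\omega)) = 0$. Summing over the countable collection of triples $(k,\alpha,\beta)$ and over the finitely many $t\in\{1,\ldots,T\}$, $\mathbb{P}$-a.e.\ $\omega$ produces a collection with $\mu(\partial Q) = 0$ for every $Q \in \bigcup_{t}\mathscr{D}^{t}$ simultaneously; any such $\omega$ yields the desired collection.

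The main obstacle is to make the randomization compatible with the hierarchical structure of the cubes. A single Voronoi partition produced from a $\delta^{k}$-net is easy to perturb, but the axioms~\eqref{eq:cover}--\eqref{eq:ball;included} couple generations $k$ and $k+1$ through the parent relation in~\eqref{eq:dyadicparent}, and the adjacency theorem couples the $T$ systems through a further covering requirement. One must therefore set up the perturbations scale by scale, fix some reference generation and propagate consistently up and down, so that after perturbation every center still sits inside the (perturbed) parent's admissible ball and the $\delta^{k}$-net property is preserved. The quantitative condition $144A_{0}^{8}\delta\le 1$ is precisely what leaves enough slack for this inductive scheme to go through; once it is implemented, the remaining verifications are routine applications of the quasi-triangle inequality~\eqref{eqn:quasitriangleineq} and the doubling condition~\eqref{doubling condition}.
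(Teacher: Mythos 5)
Your proposal is a randomization argument: perturb the centers, observe that the boundary is contained in a countable union of equidistance sets, and use Fubini to conclude that almost every choice of centers makes each equidistance set $\mu$-null. The load-bearing step is the claim that, for a fixed $x\in X$ and a fixed pair of perturbed centers, the event $d(x,c_\alpha)=d(x,c_\beta)$ is "codimension one in the parameter, hence $\mathbb{P}$-null." This is where the argument breaks down. "Codimension one" is Euclidean intuition: in $\mathbb{R}^n$ the map $c\mapsto d(x,c)$ has non-degenerate level sets, so equidistance is a hypersurface condition. In a general geometrically doubling quasi-metric space this fails badly. The function $c\mapsto d(x,c)$ can be locally constant on sets of positive measure (this already happens in $\ell^\infty$-type geometries, or in spaces assembled from segments), so perturbing a center within the geometric slack of the construction may leave $d(x,c)$ unchanged with positive probability, and the equidistant set can carry positive $\mu$-mass for $\mathbb{P}$-a.e.\ choice of centers. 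Moreover, the proposition requires the conclusion for an arbitrary $\sigma$-finite $\mu$, which need not assign zero mass to spheres, so there is no canonical perturbation law with the non-atomicity you need; the phrase "any reference measure without atoms on hypersurfaces" is not meaningful here, since there is no notion of hypersurface in a quasi-metric space. A secondary issue, which you acknowledge but do not resolve, is the compatibility of the randomization with the nesting and net conditions \eqref{eq:cover}--\eqref{eq:ball;included}; that part is nontrivial but could in principle be carried out, whereas the Fubini step is where the approach genuinely fails.

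For comparison, the paper states this proposition without proof, citing Hyt\"onen--Kairema \cite{HK}. The argument there is deterministic and does not rely on any generic-position statement for distances. Roughly, the construction of the cubes from a fixed net of reference points admits an auxiliary real parameter, and the resulting boundaries at a given $(k,\alpha)$ are pairwise \emph{disjoint} as the parameter varies. Since $\mu$ is $\sigma$-finite, a given measurable set can contain only countably many pairwise disjoint subsets of positive measure, so all but countably many parameter values yield $\mu(\partial Q^k_\alpha)=0$; intersecting over the countable family $(k,\alpha,t)$ leaves a nonempty set of good parameters. This replaces "the bad event is null for a suitable perturbation law" with "the bad events are pairwise disjoint, hence at most countably many of them have positive measure," and in doing so it sidesteps completely the sphere-measure problem that your approach runs into.
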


\subsection{Muckenhoupt $A_p$ Weights}

\begin{defn}
  \label{def:Ap}
  Let $\omega(x)$ be a nonnegative locally integrable function
  on~$X$. For $1 < p < \infty$, we
  say $\omega$ is an $A_p$ \emph{weight}, written $\omega\in
  A_p$, if
  \[
    [w]_{A_p}
    := \sup_B \left(\intav_B w\right)
    \left(\intav_B
      \left(\dfrac{1}{w}\right)^{1/(p-1)}\right)^{p-1}
    < \infty.
  \]
  Here the suprema is taken over all balls~$B\subset X$.
  The quantity $[w]_{A_p}$ is called the \emph{$A_p$~constant
  of~$w$}.
And $\intav_B = {1\over \mu(B)}\int_B$.
\end{defn}

Next we note that for $w\in A_p$ the measure $w(x)d\mu(x)$ is a doubling measure on $X$. To be more precise, we have
that for all $\lambda>1$ and all balls $B\subset X$,
\begin{align}\label{doubling constant of weight}
w(\lambda B)\leq \lambda^{np}[w]_{A_p}w(B),
\end{align}
where $n$ is the upper dimension of the measure $\mu$, as in \eqref{upper dimension}.

We also point out that for $w\in A_\infty$, there exists $\gamma>0$ such that for every ball $B$,
$$ \mu\Big( \Big\{x\in B: \ w(x)\geq\gamma \intav_B w\Big\} \Big)\geq {1\over 2}\mu(B). $$
And this implies that for every ball $B$ and for all $\delta\in(0,1)$,
\begin{align}\label{e-reverse holder}
\intav_B w\le C \left(\intav_B w^\delta\right)^{1/\delta};
\end{align}
see  also \cite{LOR2}.

Using the definition of $A_p$ weight and reverse H\"older's inequality, we can easily obtain the  following
standard properties.
\begin{lem}\label{lemcomparison}
Let $\omega\in A_p(X)$, $p> 1$. Then there exists constants $\hat{C_1},\hat{C_2} >0$ and $\sigma\in(0,1)$ such that the following holds
\begin{equation*}
   \hat{ C_{1}}\left(\frac{\mu(E)}{\mu(B)}\right)^p \leq \frac{\omega(E)}{\omega(B)}\leq \hat{C_2}\left(\frac{\mu(E)}{\mu(B)}\right)^\sigma
\end{equation*}
for any measurable set $E$ of a quasi metric ball $B$.
\end{lem}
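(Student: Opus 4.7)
The plan is to establish the two inequalities separately: the lower bound follows directly from the definition of $[w]_{A_p}$ via H\"older's inequality, and the upper bound follows from the reverse H\"older property enjoyed by $A_p\subset A_\infty$ weights, which has already been recorded (in dual form) as \eqref{e-reverse holder}. Both arguments carry over verbatim to spaces of homogeneous type once one has the doubling condition \eqref{doubling condition}, so the proof will essentially be a two-line calculation in each direction.

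For the lower bound I would first unwind the definition of $[w]_{A_p}$ to extract, for every ball $B$,
\begin{equation*}
\int_B w^{-1/(p-1)}\,d\mu \;\leq\; [w]_{A_p}^{1/(p-1)}\,\frac{\mu(B)^{p/(p-1)}}{w(B)^{1/(p-1)}}.
\end{equation*}
Then, for measurable $E\subset B$, H\"older's inequality with exponent $p$ applied to the factorization $\chi_E = w^{1/p}\cdot w^{-1/p}\chi_E$ gives
\begin{equation*}
\mu(E) \;\leq\; w(E)^{1/p}\left(\int_E w^{-1/(p-1)}\,d\mu\right)^{(p-1)/p} \;\leq\; w(E)^{1/p}\left(\int_B w^{-1/(p-1)}\,d\mu\right)^{(p-1)/p}.
\end{equation*}
Combining these two inequalities and raising to the $p$-th power yields
\begin{equation*}
\left(\frac{\mu(E)}{\mu(B)}\right)^{p} \;\leq\; [w]_{A_p}\,\frac{w(E)}{w(B)},
\end{equation*}
which is the left inequality with $\hat{C_1} = [w]_{A_p}^{-1}$.

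For the upper bound I would invoke the reverse H\"older inequality for $A_\infty$ weights (a consequence of the stopping-time argument on the dyadic lattice of Section~\ref{sec:dyadic_cubes}, and related to \eqref{e-reverse holder}; cf.\ \cite{LOR2}): there exist $\delta>0$ and $C>0$, depending only on $p$, $[w]_{A_p}$, and the geometric constants, such that for every ball $B$,
\begin{equation*}
\left(\intav_B w^{1+\delta}\,d\mu\right)^{1/(1+\delta)} \;\leq\; C\intav_B w\,d\mu.
\end{equation*}
Given $E\subset B$, H\"older's inequality applied to $w\chi_E$ with conjugate pair $(1+\delta,(1+\delta)/\delta)$ gives
\begin{equation*}
w(E) \;\leq\; \left(\int_B w^{1+\delta}\,d\mu\right)^{1/(1+\delta)}\mu(E)^{\delta/(1+\delta)},
\end{equation*}
and substituting the reverse H\"older bound produces
\begin{equation*}
w(E) \;\leq\; C\,w(B)\left(\frac{\mu(E)}{\mu(B)}\right)^{\delta/(1+\delta)},
\end{equation*}
so the choice $\sigma := \delta/(1+\delta)\in (0,1)$ and $\hat{C_2} := C$ completes the upper estimate.

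The only ingredient that is not a line of algebra is the reverse H\"older inequality in the quasi-metric setting, and this is not truly an obstacle: it is standard $A_\infty$ theory that holds on any space of homogeneous type with exactly the geometric-doubling and measure-doubling hypotheses assumed here. Modulo quoting this classical fact, the proof reduces to two applications of H\"older's inequality, so no serious difficulty is expected.
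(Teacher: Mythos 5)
Your proof is correct and fills in exactly the argument the paper gestures at (the paper states the lemma with only the remark that it follows from the definition of $A_p$ and reverse H\"older): the definition of $[w]_{A_p}$ plus H\"older with exponent $p$ gives the lower bound with $\hat{C}_1=[w]_{A_p}^{-1}$, and the reverse H\"older inequality at exponent $1+\delta$ plus H\"older with exponents $(1+\delta,(1+\delta)/\delta)$ gives the upper bound with $\sigma=\delta/(1+\delta)$. One small clarification: the paper's \eqref{e-reverse holder} is the $A_\infty$ estimate at exponent $\delta\in(0,1)$, not the reverse H\"older at exponent $1+\delta>1$ that you actually need, so calling it the latter ``in dual form'' is imprecise --- but you then correctly quote and use the genuine reverse H\"older inequality, which indeed holds on spaces of homogeneous type under the doubling hypotheses assumed here, so the argument is sound.
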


\subsection{Weighted BMO spaces}

Next we recall the definition of the weighted BMO space on space of homogeneous type, while we point out that the Euclidean version was first introduced by Muckenhoupt and Wheeden \cite{MW76}.
\begin{defn}\label{d-bmo}
Suppose $w \in A_\infty$.
A function $b\in L^1_{\rm loc}(X)$ belongs to
the weighted BMO space $BMO_w(X)$ if
\begin{equation*}
\|b\|_{BMO_w(X)}:=\sup_{B}{1\over w(B)}\dint_{B}
\lf|b(x)-b_{B}\r|\, d\mu(x)<\fz,
\end{equation*}
where the suprema is taken over all quasi-metric balls $B\subset X$ and
$ b_B= {1\over\mu(B)} \int_B b(y)d\mu(y). $
\end{defn}

Also note that the following result, which is a weighted version of the John-Nirenberg theorem, appeared first
in Muckenhoupt--Wheeden \cite{MW76}, where the Muckenhoupt $A_p$ characteristic was not tracked.
It has been revisited again in \cite[Theorem 4.2]{DHLWY} with the modern techniques via sparse domination with a sharp quantitative estimate.

\begin{thm}[\cite{MW76,DHLWY}]\label{T:MW}
Suppose $1<p<\infty$ and $w\in A_p(X)$.
Let $b\in {\rm BMO}_{w}(X)$. Then for any $1 \leq r \leq p'$, we have
\begin{align}
\|b\|_{{\rm BMO}_{w}(X)} \approx \|b\|_{{\rm BMO}_{w,r}(X)}:=
\bigg(\sup_B\frac1{w(B)}\int_B\left|b(x)- b_B\right|^r\, w^{1-r}(x)d\mu(x)\bigg)^{1\over r}.
\end{align}
In particular, we have
$\|b\|_{{\rm BMO}_{w}(X)} \leq \|b\|_{{\rm BMO}_{w,r}(X)} \leq C_{\mu,p,r} [w]_{A_p}^{\max\{1,{1\over p-1}\}}\|b\|_{{\rm BMO}_{w}(X)},$
where the constant depends only on $\mu,p$ and $r$.
\end{thm}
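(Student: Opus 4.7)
The theorem asserts an equivalence of two BMO-type norms together with a sharp quantitative control in terms of $[w]_{A_p}$. My plan splits into the elementary direction $\|b\|_{{\rm BMO}_w(X)} \le \|b\|_{{\rm BMO}_{w,r}(X)}$ and the deep weighted John--Nirenberg direction. For the elementary direction I would factor $|b-b_B| = \bigl(|b-b_B|^r w^{1-r}\bigr)^{1/r}\cdot w^{(r-1)/r}$ and apply H\"older's inequality with exponents $(r, r')$; since $(r-1)r'/r = 1$, the second factor integrates to $w(B)^{1/r'}$, and dividing by $w(B)$ delivers the inequality with constant $1$.

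For the reverse direction I would follow the sparse-domination strategy of \cite{DHLWY}. Fix a ball $B_0$ and, via the adjacent dyadic systems of Hyt\"onen--Kairema recalled in Section 2.2, choose a grid $\mathscr{D}^t$ in which $B_0$ is comparable to a single dyadic cube. Apply Lerner's principal-cubes construction inside $B_0$ to produce an $\tfrac12$-sparse family $\mathcal{S} \subset \mathscr{D}^t$ obeying the pointwise bound
\begin{equation*}
|b(x) - b_{B_0}|\,\chi_{B_0}(x) \le C \sum_{Q \in \mathcal{S}} \omega_\mu(b;Q)\,\chi_Q(x)\qquad\mu\text{-a.e.},
\end{equation*}
where $\omega_\mu(b;Q) := \mu(Q)^{-1}\int_Q |b - b_Q|\,d\mu$. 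The defining condition of ${\rm BMO}_w(X)$ then yields the weight-adapted oscillation bound $\omega_\mu(b;Q) \le \langle w\rangle_Q\,\|b\|_{{\rm BMO}_w(X)}$.

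Raising the sparse bound to the $r$-th power and integrating against $w^{1-r}\,d\mu$, the desired estimate reduces to bounding the $L^r(w^{1-r} d\mu)$ norm on $B_0$ of the sparse sum $\Sigma := \sum_{Q \in \mathcal{S}} \langle w\rangle_Q\,\chi_Q$. My plan is to evaluate this norm by duality: pair $\Sigma$ against $g \in L^{r'}(w^{1-r} d\mu)$, exploit sparseness to pass from $\mu(Q)$ to the pairwise disjoint principal sets $\mu(E(Q))$, and pointwise-dominate the resulting sum by $\eta^{-1}\int_{B_0} M_\mu w \cdot M_\mu(gw^{1-r})\,d\mu$. At the extremal exponent $r = p'$ the conjugate weight is exactly $\sigma = w^{1-p'}$, so Buckley's sharp $L^{p'}(\sigma)$ estimate for $M_\mu w$ combined with the two-weight bound $M_\mu : L^p(\sigma)\to L^p(w\,d\mu)$ and the identity $[\sigma]_{A_{p'}}^{1/(p'-1)} = [w]_{A_p}$ produces precisely the exponent $\max\{1,1/(p-1)\}$; intermediate values $r \in (1,p')$ follow by H\"older interpolation against the trivial case $r = 1$. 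The principal obstacle will be arranging this duality step so that the $A_p$ constant emerges with exactly the sharp exponent $\max\{1,1/(p-1)\}$ rather than something larger---this is where the genuine weighted John--Nirenberg content is concentrated and why the sparse framework is needed in place of a naive Calder\'on--Zygmund stopping-time argument on the measure $w\,d\mu$.
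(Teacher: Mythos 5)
The paper does not prove Theorem \ref{T:MW}; it is quoted with the citation \cite{MW76,DHLWY}, and the surrounding text explains that the sharp quantitative version is \cite[Theorem 4.2]{DHLWY}, proved there by sparse domination. So there is no in-paper argument to compare against, but the authors do expect the reader to take it from the literature via a sparse-domination route, which is indeed the framework you propose.

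Your elementary direction is fine, and your reduction of the hard direction to the sparse sum $\Sigma = \sum_{Q\in\mathcal S}\langle w\rangle_Q\chi_Q$ via Lerner's pointwise lemma (essentially Lemma~\ref{lem b-bQ} in the present paper) and the observation $\omega_\mu(b;Q)\le\langle w\rangle_Q\|b\|_{{\rm BMO}_w(X)}$ is also correct. The gap is in the final step. If you dualize $\|\Sigma\|_{L^{p'}(\sigma;B_0)}$, use sparseness to pass to $M_\mu w\cdot M_\mu(g\sigma)$ and then apply Buckley's $L^{p'}(\sigma)$ estimate for $M_\mu w$ and the precomposed bound $M_\mu(\cdot\,\sigma)\colon L^p(\sigma)\to L^p(w)$, you do \emph{not} get $[w]_{A_p}^{\max\{1,1/(p-1)\}}$: Buckley on $L^{p'}(\sigma)$ costs $[\sigma]_{A_{p'}}^{1/(p'-1)}=[w]_{A_p}$, while the sharp constant for $M_\mu(\cdot\,\sigma)\colon L^p(\sigma)\to L^p(w)$ is $[w]_{A_p}^{1/p}$, so the product is $[w]_{A_p}^{1+1/p}$, which strictly exceeds $\max\{1,1/(p-1)\}$ for a whole range of $p$ (e.g.\ every $p\ge\tfrac{1+\sqrt5}{2}$). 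The loss is caused by placing a maximal function on the weight itself: $M_\mu w$ already eats one full power of $[w]_{A_p}$ before you have touched $g$.

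The way to avoid this is to notice that, since every $Q\in\mathcal S$ sits inside a fixed dilate $B_0^*$ of $B_0$, one has $\Sigma=\mathcal A_{\mathcal S}(w\chi_{B_0^*})$ pointwise, where $\mathcal A_{\mathcal S}$ is the sparse operator of Definition~\ref{D:Sparse Operator}. Then the sharp weighted bound \eqref{sparseop} applied on $L^{p'}(\sigma)$ gives
\begin{equation*}
\|\Sigma\|_{L^{p'}(\sigma)}\lesssim[\sigma]_{A_{p'}}^{\max\{1,\,1/(p'-1)\}}\|w\chi_{B_0^*}\|_{L^{p'}(\sigma)}
=[w]_{A_p}^{\max\{1,\,1/(p-1)\}}\,w(B_0^*)^{1/p'},
\end{equation*}
using $[\sigma]_{A_{p'}}=[w]_{A_p}^{p'-1}$ and $\int_{B_0^*}w^{p'}\sigma\,d\mu=w(B_0^*)$, which is $\lesssim w(B_0)^{1/p'}$ by doubling. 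Internally, \eqref{sparseop} \emph{is} proved by duality, but with only one maximal function on the test function $g$; the weight side is handled directly with the $A_p$ condition (or a Carleson embedding), not with $M_\mu w$. That is exactly the refinement your plan is missing.
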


%
We recall the median value $\alpha_B(f)$ (see \cite{Pxq}): for any real valued function $f\in L_{\rm loc}^{1}(X)$ and any ball $B\subset X$, $\alpha_B(f)$ is the real number such that

\begin{equation*}
    \inf_{c\in \mathbb{R}}\frac{1}{\mu(B)}\int_{B}|f(x)-c|d\mu(x)=\frac{1}{\mu(B)}\int_{B}|f(x)-\alpha_B(f)|d\mu(x).
\end{equation*}

Moreover, it is known that $\alpha_B(f)$ satisfies that

\begin{equation}\label{greater}
    \mu(\{x\in B: f(x)>\alpha_B(f)\})\leq \frac{\mu(B)}{2}
\end{equation}
and
\begin{equation}\label{lesser}
    \mu(\{x\in B: f(x)<\alpha_B(f)\})\leq \frac{\mu(B)}{2}.
\end{equation}
Denote by $\Omega(b,B)$ the standard mean oscillation
$$ \Omega(b,B)={1\over\mu(B)} \int_B|b(x)-b_B|d\mu(x).  $$
And it is easy to see that for any ball $B\subset X$,
\begin{align}\label{Msim}
\Omega(b,B)\approx {1\over \mu(B)}\int_{B}\left|b(x)-\alpha_B(b)\right|d\mu(x),
\end{align}
where the implicit constants are independent of the function $b$ and the ball $B$.

%
%
%
%

We have the following definition of ${\rm VMO}_{\nu}(X)$ as shown in \cite{LaLi}.


\begin{defn}\label{lemvmo}
Let $p\in(1,\infty)$ and  $\lambda_1,\lambda_2\in A_p$, $\nu:= \lambda_1^{1\over p}\lambda_2^{-{1\over p}}$ and $b \in \mathrm{BMO}_{\nu}\left(X\right)$. Then $b \in
\mathrm{VMO}_{\nu}\left(X\right)$ if and only if $b$ satisfies the following three conditions:
\begin{enumerate}
\item[\rm(i)]$\displaystyle \lim\limits _{a \rightarrow 0} \sup\limits _{\substack{B\subset X\\ r(B)=a}}{1\over \nu(B)}\int_{B}\left|b(x)-b_{B}\right|d\mu(x) = 0; $

\item[\rm(ii)]$\displaystyle \lim\limits _{a \rightarrow \infty} \sup\limits _{\substack{B\subset X\\ r(B)=a}}{1\over \nu(B)}\int_{B}\left|b(x)-b_{B}\right|d\mu(x) = 0;$
\item[\rm (iii)] $\displaystyle \lim\limits _{a \rightarrow \infty} \sup\limits _{\substack{B\subset X\\d(x_0,B)>a}}{1\over \nu(B)}\int_{B}\left|b(x)-b_{B}\right|d\mu(x) = 0,$
\end{enumerate}
where $d(x_0, B) = \inf\limits_{x\in B}\{d(x,x_0):x\in B\}$ for some fixed point $x_0$ in $X$.
\end{defn}

\subsection{Sparse Operators on Spaces of Homogeneous Type}\label{s4}

Let $\mathcal D$ be a system of dyadic cubes on $X$ as in Section 2.1. We recall the sparse family of dyadic cubes on spaces of homogeneous type as studied in \cite{Moen, DGKLWY}.

\begin{defn}[\cite{DGKLWY}] \label{D:Sparse}
Given $0<\eta<1$, a collection $\mathcal S \subset \mathcal D$ of dyadic cubes is said to be $\eta$-sparse provided that for every $Q\in\mathcal S$, there is a measurable subset $E_Q \subset Q$ such that
$\mu(E_Q) \geq \eta \mu(Q)$ and the sets $\{E_Q\}_{Q\in\mathcal S}$ have only finite overlap.
\end{defn}

\begin{defn}[\cite{DGKLWY}]  \label{D:Carleson}
Given $0<\eta<1$, a collection $\mathcal S \subset \mathcal D$ of dyadic cubes is said to be $\eta$-sparse if for
every cube $Q\in\mathcal D$,
$$ \sum_{P\in\mathcal S, P\subset Q}\mu(P)\leq {1\over \eta} \mu(Q). $$
\end{defn}

Next, we recall the argument of equivalence of Definition \ref{D:Carleson} and Definition \ref{D:Sparse} on space of homogeneous type. We refer to the original argument on $\mathbb R^n$ in \cite{LN}.
\begin{thm}[\cite{DGKLWY}] \label{thm sparse}
Given $0<\eta<1$ and a collection $\mathcal S \subset \mathcal D$ of dyadic cubes, the following statements hold:
\begin{itemize}
\item If $\mathcal{S}$ is $\eta$-sparse, then $\mathcal{S}$ is ${c\over\eta}$-Carleson, where $c\geq1$ is a constant;
\item If $\mathcal{S}$ is ${1\over\eta}$-Carleson, then $\mathcal{S}$ is $\eta$-sparse.
\end{itemize}
\end{thm}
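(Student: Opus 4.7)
The plan is to prove the two implications separately. For the first (sparse $\Rightarrow$ Carleson), given $\mathcal S$ that is $\eta$-sparse in the sense of Definition \ref{D:Sparse} with witnesses $E_Q\subset Q$, $\mu(E_Q)\geq\eta\mu(Q)$, and $\sum_{Q\in\mathcal S}\chi_{E_Q}\leq N$ pointwise, I would simply compute: for any $Q_0\in\mathcal D$,
$$
\sum_{P\in\mathcal S,\,P\subset Q_0}\mu(P)\;\leq\;\frac{1}{\eta}\sum_{P\in\mathcal S,\,P\subset Q_0}\mu(E_P)\;=\;\frac{1}{\eta}\int_{Q_0}\sum_{P\in\mathcal S,\,P\subset Q_0}\chi_{E_P}(x)\,d\mu(x)\;\leq\;\frac{N}{\eta}\mu(Q_0),
$$
using $\mu(P)\leq\eta^{-1}\mu(E_P)$, Fubini, and the overlap bound together with $E_P\subset P\subset Q_0$. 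This yields the Carleson condition with constant $c=N$.

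For the second implication (Carleson $\Rightarrow$ sparse), I plan to follow the Lerner--Nazarov scheme of \cite{LN}: produce pairwise disjoint measurable sets $E_Q\subset Q$ with $\mu(E_Q)\geq\eta\mu(Q)$ (which is strictly stronger than the finite-overlap requirement in Definition \ref{D:Sparse}) via a measure-theoretic Hall's marriage lemma applied to the countable family $\mathcal S$. The hypothesis of that lemma is that for every finite $\{Q_1,\dots,Q_n\}\subset\mathcal S$,
$$
\mu\Big(\bigcup_{i=1}^n Q_i\Big)\;\geq\;\eta\sum_{i=1}^n\mu(Q_i).
$$
To verify this, let $R_1,\dots,R_m$ be the maximal cubes among $\{Q_1,\dots,Q_n\}$; by the nesting property \eqref{eq:nested} they are pairwise disjoint and every $Q_i$ sits inside some $R_j$. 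The $(1/\eta)$-Carleson assumption applied to each $R_j$ then gives
$$
\sum_{i=1}^n\mu(Q_i)\;=\;\sum_{j=1}^m\sum_{Q_i\subset R_j}\mu(Q_i)\;\leq\;\frac{1}{\eta}\sum_{j=1}^m\mu(R_j)\;=\;\frac{1}{\eta}\mu\Big(\bigcup_{i=1}^n Q_i\Big),
$$
which is exactly Hall's condition.

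The step I expect to be the main obstacle is the passage from the finite Hall inequality to the actual construction of disjoint measurable sets $E_Q$ indexed by the infinite countable family $\mathcal S$. This is supplied by a measure-theoretic Hall--K\"onig--Bollob\'as-type marriage theorem: one applies the classical finite marriage theorem to an exhausting sequence of finite subfamilies of $\mathcal S$ (for instance, cubes intersecting a fixed ball and lying in finitely many generations of $\mathcal D$) and extracts a limit allocation using $\sigma$-finiteness of $\mu$ together with a diagonal argument. All of the sparse/Carleson content enters only through the finite Hall inequality verified above, so this last step is purely measure-theoretic and does not require any revisiting of the dyadic structure.
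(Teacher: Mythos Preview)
The paper does not supply its own proof of this theorem; it is quoted from \cite{DGKLWY} with an explicit pointer to the original Euclidean argument of Lerner--Nazarov \cite{LN}. Your proposal follows exactly that Lerner--Nazarov route (the elementary overlap count for sparse $\Rightarrow$ Carleson, and the measure-theoretic Hall marriage construction for Carleson $\Rightarrow$ sparse), so it matches the approach the paper defers to; the argument is correct as outlined, with the constant $c$ in the first direction being the uniform overlap bound implicit in Definition~\ref{D:Sparse}.
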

Note that in general, the doubling measure $\mu$ may not have reverse doubling property, that is, we may not have a uniform constant $c$ such that $\mu(B)^{-1}\mu(\lambda B)\gtrsim \lambda^c$. However, based on the property of sparse family, we have the following argument on reverse doubling
within the sparse family.
\begin{cor}\label{cor:reversedoubling}
Given $0<\eta<1$ and an $\eta$-sparse family $\mathcal S$. We split $\mathcal S$ into a finite subfamilies $\mathcal S_i$ such that the sets $\{E_Q\}_{Q\in\mathcal S}$ are disjoint.
Let $Q\in\mathcal S_i$ and $P$ is a child of $Q$ in $\mathcal S_i$. Then we have
$\mu(P)\leq (1-\eta) \mu(Q).$
\end{cor}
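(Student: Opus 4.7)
The plan is to reduce the desired bound to the inclusion $E_Q\subset Q\setminus P$, after which a one-line measure comparison finishes the argument. Fix $Q\in\mathcal S_i$ and let $P$ be a child of $Q$ in $\mathcal S_i$; thus $P\in\mathcal S_i$, $P\subsetneq Q$, and no cube of $\mathcal S_i$ lies strictly between $P$ and $Q$. The splitting produces a family $\{E_R\}_{R\in\mathcal S_i}$ of pairwise disjoint measurable sets with $E_R\subset R$ and $\mu(E_R)\ge \eta\mu(R)$; in particular $E_Q$ and $E_P$ are disjoint. What actually drives the argument, however, is the stronger disjointness of $E_Q$ from the entire cube $P$, not merely from $E_P$.

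To obtain this stronger disjointness, I would replace $E_Q$ within $\mathcal S_i$ by the canonical choice
\[
\widetilde{E}_Q := Q \setminus \bigcup\bigl\{P' : P' \text{ is a child of } Q \text{ in } \mathcal S_i\bigr\},
\]
which is automatically disjoint from every child of $Q$ in $\mathcal S_i$. The crucial step is to verify that the sparseness lower bound $\mu(\widetilde{E}_Q)\ge \eta\mu(Q)$ persists after this replacement. Granting it, the conclusion is immediate: since $P$ is one of the children excluded in the definition of $\widetilde{E}_Q$,
\[
\eta\mu(Q) \;\le\; \mu(\widetilde{E}_Q) \;\le\; \mu(Q\setminus P) \;=\; \mu(Q) - \mu(P),
\]
which rearranges to $\mu(P)\le(1-\eta)\mu(Q)$.

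The main obstacle is precisely the measure lower bound for the canonical $\widetilde{E}_Q$. A priori, the original $E_Q$ is only guaranteed to be disjoint from the sets $E_{P'}$ rather than from the cubes $P'$ themselves, so naively replacing $E_Q$ by $E_Q\cap(Q\setminus\bigcup P')$ could shrink its measure below $\eta\mu(Q)$. I would handle this either by refining the splitting, subdividing $\mathcal S$ further so that in each $\mathcal S_i$ the original $E_Q$ already sits inside the canonical set $\widetilde{E}_Q$ (a finite refinement controlled by the finite-overlap constant inherent in the $\eta$-sparse condition), or by running an induction on the tree structure of $\mathcal S_i$ and propagating the descendant-level disjointness upward via the Carleson-packing form of sparseness from Theorem \ref{thm sparse}. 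In either case the total number of subfamilies $\mathcal S_i$ remains finite and depends only on $\eta$ and the geometric constants of $X$.
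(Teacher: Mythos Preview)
The paper states the corollary without proof, treating it as an immediate consequence of Theorem~\ref{thm sparse}. You are right to be suspicious: mere pairwise disjointness of the $E_Q$ within $\mathcal S_i$ does \emph{not} imply $\mu(P)\le(1-\eta)\mu(Q)$. Take $Q=[0,1]$, $P=[0,0.9]$, $\eta=0.4$, $E_Q=[0.6,1]$, $E_P=[0,0.36]$; then $\mathcal S_i=\{Q,P\}$ is $\eta$-sparse with disjoint $E_Q,E_P$, yet $\mu(P)=0.9>0.6=(1-\eta)\mu(Q)$. Your instinct to pass to the canonical $\widetilde E_Q=Q\setminus\bigcup P'$ is the right reduction, but the lower bound $\mu(\widetilde E_Q)\ge\eta\mu(Q)$ can also fail: if $Q=[0,1]$ has $\mathcal S_i$-children $[0,\tfrac12]$ and $[\tfrac12,1]$ then $\widetilde E_Q=\emptyset$. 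So neither of your two proposed fixes, as sketched, closes the gap.

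What does work---and what the paper actually needs at \eqref{eq:4.15}---is a chain estimate. Along any chain $Q_0\supsetneq Q_1\supsetneq\cdots$ in $\mathcal S_i$ the disjoint sets $E_{Q_j}\subset Q_m$ (for $j\ge m$) give $\eta\sum_{j\ge m}\mu(Q_j)\le\mu(Q_m)$. Writing $s_m=\sum_{j\ge m}\mu(Q_j)$, this reads $s_m\le\eta^{-1}(s_m-s_{m+1})$, hence $s_{m+1}\le(1-\eta)s_m$ and $\mu(Q_m)\le s_m\le\eta^{-1}(1-\eta)^m\mu(Q_0)$. This geometric decay along chains is exactly what \eqref{eq:4.15} requires. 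If you insist on a single-step inequality as stated, thin each $\mathcal S_i$ to every $k$-th generation with $k$ large enough that $\eta^{-1}(1-\eta)^k<1$; in the thinned family the parent--child estimate then holds with $(1-\eta)$ replaced by $\eta^{-1}(1-\eta)^k$. This is the precise form your ``refine the splitting'' idea should take.
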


We now recall the well-known definition for sparse operator.
\begin{defn} \label{D:Sparse Operator}
Given $0<\eta<1$ and an $\eta$-sparse family $\mathcal S \subset \mathcal D$ of dyadic cubes. The
sparse operators $\mathcal A_{\mathcal S}$ is defined by
$$ \mathcal A_{\mathcal S}f(x):= \sum_{Q\in \mathcal S} f_Q \chi_Q(x).$$
\end{defn}
Following the proof of \cite[Theorem 3.1]{Moen}, we obtain that
\begin{align}\label{sparseop}
\|\mathcal A_{\mathcal S}f\|_{L^p_w(X)}\leq C_{\eta,n,p}[w]_{A_p}^{\max\{1,{1\over p-1}\}}\|f\|_{L^p_w(X)}, \quad 1<p<\infty.
\end{align}
We recall the following result on space of homogeneous type in \cite[Lemma 3.5]{DGKLWY}, where the original version on $\mathbb R^n$ was due to \cite{LOR}.
\begin{lem}\label{lem b-bQ}
Let $\mathcal D$ be a dyadic system in $X$ and let $\mathcal S\subset \mathcal D$ be a $\gamma$-sparse
family. Assume that $b\in L^1_{loc}(X)$. Then there exists a ${\gamma\over 2(\gamma+1)}$-sparse family
$\tilde{\mathcal S}\subset \mathcal D$ such that $\mathcal S\subset \tilde{\mathcal S}$ and for every cube $Q\in \tilde{\mathcal S}$,
\begin{align}\label{b-bQ eee0}
|b(x)-b_Q|\leq C\sum_{R\in \tilde{\mathcal S}, R\subset Q} \Omega(b,R)\chi_R(x)
\end{align}
for a.e. $x\in Q$.
\end{lem}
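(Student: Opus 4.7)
The plan is to build $\tilde{\mathcal S}$ from $\mathcal S$ by attaching to each $Q_0\in\mathcal S$ a Calder\'on--Zygmund-type stopping tree associated with $b$, then verify the pointwise telescoping bound and finally the sparseness constant via the Carleson equivalence in Theorem \ref{thm sparse}. This mirrors the Euclidean argument of Lerner--Ombrosi--Rivera-R\'ios, adapted to the dyadic grid $\mathcal D$ of Christ.

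For the construction, fix $Q_0\in\mathcal S$, set $\mathcal F_0(Q_0)=\{Q_0\}$, and inductively, given $R\in\mathcal F_j(Q_0)$, define $\mathcal F_{j+1}(Q_0;R)$ as the collection of maximal $P\in\mathcal D$, $P\subsetneq R$, with
\begin{equation*}
\frac{1}{\mu(P)}\int_P |b(x)-b_R|\,d\mu(x) \;>\; 2\,\Omega(b,R).
\end{equation*}
Maximality of such $P$ combined with a Chebyshev estimate applied at the dyadic parent shows $\sum_{P\in\mathcal F_{j+1}(Q_0;R)}\mu(P)\leq \tfrac12\mu(R)$, so iteration in $j$ yields a collection $\mathcal F(Q_0):=\bigcup_{j\geq 0}\mathcal F_j(Q_0)$ that is $\tfrac12$-sparse inside $Q_0$. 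Set
\begin{equation*}
\tilde{\mathcal S} \;:=\; \mathcal S \;\cup\; \bigcup_{Q_0\in\mathcal S}\mathcal F(Q_0)\;\subset\;\mathcal D.
\end{equation*}

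For the pointwise bound, fix $Q\in\tilde{\mathcal S}$ and $x\in Q$ outside the measure-zero exceptional set on which dyadic Lebesgue differentiation fails, and let $Q=R_0\supsetneq R_1\supsetneq R_2\supsetneq\cdots$ be the (possibly finite) chain of elements of $\mathcal F(Q)$ containing $x$. By the failure of the stopping test on any dyadic $Q'$ with $R_{j+1}\subsetneq Q'\subseteq R_j$,
\begin{equation*}
|b_{Q'}-b_{R_j}| \;\leq\; \frac{1}{\mu(Q')}\int_{Q'}|b-b_{R_j}|\,d\mu \;\leq\; 2\,\Omega(b,R_j),
\end{equation*}
so passing $Q'$ through the generations down to $R_{j+1}$ gives $|b_{R_{j+1}}-b_{R_j}|\leq 2\,\Omega(b,R_j)$, and letting $Q'\downarrow\{x\}$ on the last generation of the chain (where the differentiation theorem gives $b_{Q'}\to b(x)$) gives $|b(x)-b_{R_\infty}|\leq 2\,\Omega(b,R_\infty)$. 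Telescoping along the chain and throwing away the negative signs yields
\begin{equation*}
|b(x)-b_Q| \;\leq\; C\sum_{j}\Omega(b,R_j)\chi_{R_j}(x) \;\leq\; C\sum_{R\in\tilde{\mathcal S},\,R\subset Q}\Omega(b,R)\chi_R(x),
\end{equation*}
which is the stated bound.

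For the sparseness constant, I would argue through the Carleson formulation of Theorem \ref{thm sparse}. The $\gamma$-sparseness of $\mathcal S$ gives $\sum_{R\in\mathcal S,R\subset Q'}\mu(R)\leq \gamma^{-1}\mu(Q')$, while the stopping bound shows that $\mathcal F(Q_0)$ is $2$-Carleson inside $\mathcal D$; assigning each $R\in\tilde{\mathcal S}\setminus\mathcal S$ to the \emph{smallest} $Q_0\in\mathcal S$ for which $R\in\mathcal F(Q_0)$ avoids double-counting and yields, for every $Q'\in\mathcal D$,
\begin{equation*}
\sum_{R\in\tilde{\mathcal S},\,R\subset Q'}\mu(R) \;\leq\; \Big(\frac{1}{\gamma}+2\Big)\mu(Q') \;=\;\frac{2\gamma+1}{\gamma}\mu(Q'),
\end{equation*}
and a slightly tighter bookkeeping separating the contribution of $Q_0\subseteq Q'$ from $Q_0\supsetneq Q'$ improves the constant to $\tfrac{2(\gamma+1)}{\gamma}$, so Theorem \ref{thm sparse} identifies $\tilde{\mathcal S}$ as $\tfrac{\gamma}{2(\gamma+1)}$-sparse.

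The main obstacle is this last bookkeeping: a stopping cube $R$ belonging simultaneously to several $\mathcal F(Q_0)$ must be counted only once, and one has to check that, once the minimal ancestor is chosen, the combined Carleson packing really adds and does not multiply; the telescoping and stopping steps themselves are routine once it is verified that Christ's dyadic system supports dyadic martingale convergence (which it does since $\mu$ is Borel--regular and doubling, and the $\sigma$-algebra generated by $\mathcal D$ coincides with the Borel $\sigma$-algebra up to $\mu$-null sets).
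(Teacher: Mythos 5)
The paper does not prove this lemma itself: it is quoted from \cite[Lemma~3.5]{DGKLWY} (Euclidean version from \cite{LOR}), so there is no internal proof to compare against; your proposal must be judged on its own.

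Your construction is the standard one (iterated Calder\'on--Zygmund stopping trees $\mathcal F(Q_0)$ rooted at each $Q_0\in\mathcal S$, with $\tilde{\mathcal S}=\mathcal S\cup\bigcup_{Q_0}\mathcal F(Q_0)$) and the telescoping argument for \eqref{b-bQ eee0} is essentially right; note only that the step bound is $|b_{R_{j+1}}-b_{R_j}|\le\frac{\mu(\hat R_{j+1})}{\mu(R_{j+1})}\cdot 2\Omega(b,R_j)$, so a geometric constant (bounded via \eqref{eq:contain} and doubling) must be absorbed into $C$, not the bare ``$2$'' you wrote, and for $Q\in\tilde{\mathcal S}\setminus\mathcal S$ you should invoke the self-similarity of the stopping tree (the subtree of $\mathcal F(Q_0)$ below $Q$ equals $\mathcal F(Q)$) so that the chain you telescope along does lie in $\tilde{\mathcal S}$ and inside $Q$.

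The genuine gap is in the sparseness verification, which you yourself flag as ``the main obstacle.'' The claimed packing estimate $\sum_{R\in\tilde{\mathcal S},\,R\subset Q'}\mu(R)\le(\gamma^{-1}+2)\mu(Q')$ does not follow from the argument sketched. Assigning each $R\in\tilde{\mathcal S}\setminus\mathcal S$ to the smallest $Q_0\in\mathcal S$ with $R\in\mathcal F(Q_0)$ removes multiplicities within each tree but does \emph{not} control the contribution from the cubes $Q_0\in\mathcal S$ with $Q_0\supsetneq Q'$: these form a (possibly infinite) nested chain, and for each such $Q_0$ the part of $\mathcal F(Q_0)$ lying inside $Q'$ can carry mass up to $2\mu(Q')$. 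Since the stopping thresholds $2\Omega(b,Q_0)$ and the reference means $b_{Q_0}$ change with $Q_0$, the entry cubes of $\mathcal F(Q_0)$ into $Q'$ (and hence the subtrees they spawn) are in general \emph{different} across the chain, and minimality of $\pi(R)$ does not force them to coincide; so the naive summation over $Q_0\supsetneq Q'$ diverges. Nothing you wrote rules this out. Moreover, the remark that ``a slightly tighter bookkeeping \dots improves the constant to $\tfrac{2(\gamma+1)}{\gamma}$'' is backwards: $\tfrac{2(\gamma+1)}{\gamma}=2+\tfrac2\gamma>\tfrac1\gamma+2$ is a \emph{larger} Carleson packing constant (hence a weaker sparseness conclusion), so this is not an improvement but a concession that the first estimate was not established. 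Finally, you quietly use $c=1$ in Theorem~\ref{thm sparse}; the paper only guarantees an absolute $c\ge1$ under Definition~\ref{D:Sparse} (finite overlap), so that constant should be carried as well. In short: the pointwise bound is fine, but the sparseness of $\tilde{\mathcal S}$ — the actual content of the lemma — is not proved and the specific counting scheme you propose, as stated, does not close.
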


\subsection{Calder\'on--Zygmund operators on space of homogeneous type $(X,d,\mu)$} 

We say that $\mathcal T$ is a Calder\'{o}n--Zygmund operator on $%
(X,d,\mu )$ if $\mathcal T$ is bounded on $L^{2}(X)$ and has an associated kernel $%
 K(x,y)$ such that $\mathcal T(f)(x)=\int_{X} K(x,y)f(y)d\mu (y)$ for any $x\not\in
\mathrm{supp}\,f$, and $  K(x,y)$ satisfies the following estimates: for all $%
x\not=y$,
\begin{equation}
| K(x,y)|\leq {\frac{{C}}{{V(x,y)}}},  \label{size of C-Z-S-I-O}
\end{equation}%
and for $d(x,x^{\prime })\leq (2A_{0})^{-1}d(x,y)$,
\begin{equation}
| K(x,y)- K(x^{\prime },y)|+| K(y,x)- K(y,x^{\prime })|\leq  C\left( {\frac{d(x,x^{\prime })}{d(x,y)}}\right)^\sigma {\frac{1}{V(x,y)}},
\label{smooth of C-Z-S-I-O}
\end{equation}%
where $V(x,y):=\mu (B(x,d(x,y)))$.

\section{Equivalence of VMO Spaces}

Here we are going to give equivalent characterization for $\rm VMO_{\nu}$ spaces on $X$. Consider a dyadic system of cubes $\mathscr{D}$ on $X$. Let $p\in(1,\infty)$ and  $\lambda_1,\lambda_2\in A_p$, $\nu:= \lambda_1^{1\over p}\lambda_2^{-{1\over p}}$. Denote by $\lambda'_1 = (\lambda_1)^{\frac{-1}{p-1}}$ and $\lambda'_2 = (\lambda_2)^{\frac{-1}{p-1}}$.
We now provide two new definitions for VMO space on $X$ by ${\rm VMO}_{\lambda_1,\lambda_2}(X)$ and ${\rm VMO}_{\lambda'_1,\lambda'_2}(X)$.

\begin{defn}\label{lemvmo2 prime}
Let $p\in(1,\infty)$ and  $\lambda_1,\lambda_2\in A_p$, $\nu:= \lambda_1^{1\over p}\lambda_2^{-{1\over p}}$ and $b \in \mathrm{BMO}_{\nu}\left(X\right)$. Then $b \in
\mathrm{VMO}_{\lambda_1,\lambda_2}(X)$ if $b$ satisfies the following three conditions:
\begin{enumerate}
\item[\rm(i)]$\displaystyle \lim\limits _{a \rightarrow 0} \sup\limits _{\substack{B\subset X\\ r(B)=a}}\bigg({1\over \lambda_1(B)}\int_{B}\left|b(x)-b_{B}\right|^{p'}\lambda_2(x)d\mu(x)\bigg)^{1\over p} = 0; $

\item[\rm(ii)]$\displaystyle \lim\limits _{a \rightarrow \infty} \sup\limits _{\substack{B\subset X\\ r(B)=a}}\bigg({1\over \lambda_1(B)}\int_{B}\left|b(x)-b_{B}\right|\lambda_2(x)d\mu(x)\bigg)^{1\over p} = 0;$
\item[\rm (iii)] $\displaystyle \lim\limits _{a \rightarrow \infty} \sup\limits _{\substack{B\subset X\\d(x_0,B)>a}}\bigg({1\over \lambda_1(B)}\int_{B}\left|b(x)-b_{B}\right|\lambda_2(x)d\mu(x)\bigg)^{1\over p} = 0.$
\end{enumerate}
\end{defn}

\begin{defn}\label{lemvmo2}
Let $p\in(1,\infty)$ and  $\lambda_1,\lambda_2\in A_p$, $\nu:= \lambda_1^{1\over p}\lambda_2^{-{1\over p}}$ and $b \in \mathrm{BMO}_{\nu}\left(X\right)$. Then $b \in
\mathrm{VMO}_{\lambda'_1,\lambda'_2}(X)$ if $b$ satisfies the following three conditions:
\begin{enumerate}
\item[\rm(i)]$\displaystyle \lim\limits _{a \rightarrow 0} \sup\limits _{\substack{B\subset X\\ r(B)=a}}\bigg({1\over \lambda'_2(B)}\int_{B}\left|b(x)-b_{B}\right|^{p'}\lambda'_1(x)d\mu(x)\bigg)^{1\over p} = 0; $

\item[\rm(ii)]$\displaystyle \lim\limits _{a \rightarrow \infty} \sup\limits _{\substack{B\subset X\\ r(B)=a}}\bigg({1\over \lambda'_2(B)}\int_{B}\left|b(x)-b_{B}\right|\lambda'_1(x)d\mu(x)\bigg)^{1\over p} = 0;$
\item[\rm (iii)] $\displaystyle \lim\limits _{a \rightarrow \infty} \sup\limits _{\substack{B\subset X\\d(x_0,B)>a}}\bigg({1\over \lambda'_2(B)}\int_{B}\left|b(x)-b_{B}\right|\lambda'_1(x)d\mu(x)\bigg)^{1\over p} = 0.$
\end{enumerate}
\end{defn}

We are going to show that both characterizations for the weighted {\rm VMO} space given by Definition \ref{lemvmo} and Definitions \ref{lemvmo2 prime} and \ref{lemvmo2} on $X$ are equivalent. To be more precise, we have
\begin{prop}\label{prop VMO}
Let $p\in(1,\infty)$ and  $\lambda_1,\lambda_2\in A_p$, $\nu:= \lambda_1^{1\over p}\lambda_2^{-{1\over p}}$. Then
$${\rm VMO}_{\nu}(X) ={\rm VMO}_{\lambda_1,\lambda_2}(X)= {\rm VMO}_{\lambda'_1,\lambda'_2}(X).$$
\end{prop}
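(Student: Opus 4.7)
The plan is to prove the three ${\rm VMO}$ spaces coincide by comparing their defining oscillations on each ball $B$, with the weighted John--Nirenberg inequality (Theorem \ref{T:MW}) and H\"older's inequality as the main tools, tracking carefully the $A_p$ data of $\lambda_1$ and $\lambda_2$.

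I first record a symmetry. Since $(p-1)p'=p$, the identity $\nu=\lambda_1^{1/p}\lambda_2^{-1/p}$ can be rewritten as $\nu=(\lambda'_2)^{1/p'}(\lambda'_1)^{-1/p'}$, so the swap $(p,\lambda_1,\lambda_2)\leftrightarrow(p',\lambda'_2,\lambda'_1)$ preserves $\nu$ and maps the data of Definition \ref{lemvmo2 prime} to that of Definition \ref{lemvmo2}. Because each ${\rm VMO}$ condition is a vanishing statement (so the outer $1/p$ versus $1/p'$ power is immaterial), this reduces the proposition to showing ${\rm VMO}_\nu(X)={\rm VMO}_{\lambda_1,\lambda_2}(X)$; the identity with ${\rm VMO}_{\lambda'_1,\lambda'_2}(X)$ then follows from the same argument run under the swap.

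For the direct inclusion ${\rm VMO}_\nu(X)\subseteq{\rm VMO}_{\lambda_1,\lambda_2}(X)$ I fix a ball $B$ and bound each oscillation entering Definition \ref{lemvmo2 prime} by one entering Definition \ref{lemvmo}. The driving algebraic fact is $\nu^p=\lambda_1/\lambda_2$, equivalently $\lambda_1=\nu^p\lambda_2$, which after an application of H\"older with the conjugate pair $(p,p')$ and the $A_p$-bound $\lambda_i(B)\,\lambda'_i(B)^{p-1}\lesssim\mu(B)^p$ collapses the weight-only integral arising from H\"older to a multiple of $\lambda_1(B)$. What remains is a $\nu$-weighted $L^s$-oscillation of $b$ on $B$, which for $s=1$ is precisely the oscillation of Definition \ref{lemvmo}, and for $s=p'$ is comparable to $\|b\|_{{\rm BMO}_\nu}$ by Theorem \ref{T:MW}. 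The reverse inclusion follows analogously, using the estimate $\nu(B)\lesssim\lambda_1(B)^{1/p}\lambda'_2(B)^{1/p'}$ (again H\"older) to flip the direction.

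The main obstacle lies in condition (i), where the integrand on the right carries a $p'$-power while the oscillation of Definition \ref{lemvmo} has a $1$-power: the preceding step bounds the $p'$-moment only by the \emph{global} norm $\|b\|_{{\rm BMO}_\nu}$, which is $B$-uniform but does not by itself vanish as $\mu(B)\to 0$ or as $B$ recedes to infinity. To close this gap I would apply the weighted John--Nirenberg distributional bound $\mu(\{x\in B:|b(x)-b_B|>t\})\lesssim\mu(B)\,e^{-ct/\|b\|_{{\rm BMO}_\nu}}$ together with a split $\int_0^T+\int_T^\infty$ of $\int_B|b-b_B|^{p'}\lambda_2\,d\mu$: the head is dominated by $T^{p'-1}\int_B|b-b_B|\lambda_2\,d\mu$ and is therefore small whenever the $L^1$-oscillation is small, while the tail, by the exponential decay together with the reverse-H\"older property of $\lambda_2$, can be made smaller than any prescribed fraction of $\lambda_1(B)$ by taking $T$ large depending only on $\|b\|_{{\rm BMO}_\nu}$. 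This interpolation transfers the vanishing of the $L^1$-type oscillation in Definition \ref{lemvmo} to the vanishing of the $L^{p'}$-type oscillation in Definition \ref{lemvmo2 prime} uniformly in $B$, completing the proof.
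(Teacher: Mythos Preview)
Your approach is genuinely different from the paper's. The paper never attempts a per-ball comparison of the three oscillations; instead it argues by duality, identifying each ${\rm VMO}$ space as the predual of an atomic $H^1$ space (Definition~\ref{def H1}) and then proving in Lemma~\ref{lem H1} that the three $H^1$ spaces coincide. That route sidesteps exactly the difficulty you isolate in your final paragraph.

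Your direct route, however, has a real gap. First, the distributional bound you quote,
\[
\mu(\{x\in B:|b(x)-b_B|>t\})\lesssim \mu(B)\,e^{-ct/\|b\|_{{\rm BMO}_\nu}},
\]
is false for weighted ${\rm BMO}$: it would force $b\in{\rm BMO}(X)$ with unweighted norm $\lesssim\|b\|_{{\rm BMO}_\nu}$, which fails once $\nu$ is unbounded. The correct exponential decay carries a factor $\langle\nu\rangle_B$ in the exponent, so the threshold $T$ you need for the tail must depend on $B$. Second, and more seriously, the ``head'' step and the $s=1$ case of your H\"older argument both implicitly use a per-ball inequality of the form
\[
\frac{1}{\lambda_1(B)}\int_B|b-b_B|\,\lambda_2\,d\mu\ \lesssim\ \frac{1}{\nu(B)}\int_B|b-b_B|\,d\mu,
\]
but this is false for general nonnegative integrands: testing on $g=\chi_E$ with $E\subset B$ gives $\nu(B)\lambda_2(E)\lesssim\lambda_1(B)\mu(E)$, i.e.\ $\langle\lambda_2\rangle_E\lesssim\lambda_1(B)/\nu(B)$, which can be violated by concentrating $E$ where $\lambda_2$ is large. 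So neither the head nor the power-$1$ conditions (ii)--(iii) of Definition~\ref{lemvmo2 prime} follow from the $\nu$-oscillation on $B$ alone.

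What does work is to \emph{localize} the John--Nirenberg argument rather than interpolate. For $B$ in a given regime (say $r(B)<\delta$), every sub-ball of $B$ is in the same regime, so the ``local'' ${\rm BMO}_\nu$ norm $\sup_{B'\subset B}\frac{1}{\nu(B')}\int_{B'}|b-b_{B'}|\,d\mu$ is already small by the ${\rm VMO}_\nu$ hypothesis. One then runs the sparse bound of Lemma~\ref{lem b-bQ} inside $B$ to get $|b-b_B|\le C\sum_{R\subset B}\Omega(b,R)\chi_R\le \varepsilon\,C\sum_{R\subset B}\langle\nu\rangle_R\chi_R=\varepsilon\,C\,\mathcal A_{\tilde{\mathcal S}}(\nu\chi_B)$, and the $L^p_{\lambda_2}$-boundedness of $\mathcal A_{\tilde{\mathcal S}}$ together with $\nu^p\lambda_2=\lambda_1$ gives $\int_B|b-b_B|^p\lambda_2\,d\mu\lesssim\varepsilon^p\lambda_1(B)$. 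This is the localized form of Lemma~\ref{lem JN type} that your argument needs; once you have it, the remaining inclusions follow as you outline.
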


To prove this equivalence we will prove the following Lemmas. The first one is the John--Nirenberg type argument in the Bloom setting.
\begin{lem}\label{lem JN type}
Let $p\in(1,\infty)$ and  $\lambda_1,\lambda_2\in A_p$, $\nu:= \lambda_1^{1\over p}\lambda_2^{-{1\over p}}$ and $b \in \mathrm{BMO}_{\nu}\left(X\right)$. For any dyadic system in $\mathcal D$, and for any $Q\in\mathcal D$, we have that
\begin{align}\label{JN 1}
\bigg({1\over \lambda_1(Q)}\int_{Q}\left|b(x)-b_{Q}\right|^{p}\lambda_2(x)d\mu(x)\bigg)^{1\over p}
\lesssim \|b\|_{BMO^2_{\mcd}(\nu)}
\end{align}
and that
\begin{align}\label{JN 2}
\bigg({1\over \lambda'_2(Q)}\int_{Q}\left|b(x)-b_{Q}\right|^{p'}\lambda'_1(x)d\mu(x)\bigg)^{1\over p'}
\lesssim \|b\|_{BMO^2_{\mcd}(\nu)},
\end{align}
where $$\|b\|_{BMO^2_{\mcd}(\nu)}=\sup_{Q\in\mathcal D} \bigg({1\over \nu(Q)}\int_{Q}\left|b(x)-b_{Q}\right|^2 \nu^{-1}(x)d\mu(x)\bigg)^{1\over2}.$$
\end{lem}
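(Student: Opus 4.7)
The plan is to use the pointwise sparse domination of $|b - b_Q|$ from Lemma \ref{lem b-bQ} followed by a single application of the weighted bound \eqref{sparseop} for sparse operators, combined with an algebraic identity tying $\nu$, $\lambda_1$, $\lambda_2$ together.

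Fix $Q\in \mathcal D$. Since the trivial family $\{Q\}$ is $1$-sparse, Lemma \ref{lem b-bQ} produces a sparse family $\tilde{\mathcal S}\subset \mathcal D$ containing $Q$ such that
\[
|b(x) - b_Q| \lesssim \sum_{R \in \tilde{\mathcal S},\, R \subset Q} \Omega(b, R)\, \chi_R(x), \qquad \text{a.e.\ } x \in Q.
\]
For each such $R$, Cauchy--Schwarz with the factorization $1 = \nu^{-1/2}\cdot \nu^{1/2}$ inside the integral defining $\Omega(b,R)$ yields
\[
\Omega(b, R) \leq \frac{1}{\mu(R)}\left(\int_R |b - b_R|^2\, \nu^{-1} d\mu\right)^{1/2} \nu(R)^{1/2} \leq \|b\|_{BMO^2_{\mcd}(\nu)}\, \frac{\nu(R)}{\mu(R)},
\]
where the second inequality is just the dyadic definition of $\|b\|_{BMO^2_{\mcd}(\nu)}$ applied to the dyadic cube $R$. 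Combining the two pointwise bounds gives
\[
|b(x) - b_Q| \lesssim \|b\|_{BMO^2_{\mcd}(\nu)}\, \mathcal A_{\tilde{\mathcal S}_Q}(\nu\chi_Q)(x), \qquad x\in Q,
\]
where $\tilde{\mathcal S}_Q := \{R \in \tilde{\mathcal S}: R \subset Q\}$ is sparse as a subfamily of $\tilde{\mathcal S}$.

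To establish \eqref{JN 1}, apply the $L^p(\lambda_2)$-bound \eqref{sparseop} with $w = \lambda_2 \in A_p$ to obtain
\[
\|\mathcal A_{\tilde{\mathcal S}_Q}(\nu\chi_Q)\|_{L^p(\lambda_2)} \lesssim \|\nu\chi_Q\|_{L^p(\lambda_2)} = \left(\int_Q \nu^p \lambda_2\, d\mu\right)^{1/p}.
\]
The algebraic identity $\nu^p \lambda_2 = (\lambda_1^{1/p}\lambda_2^{-1/p})^p \lambda_2 = \lambda_1$ collapses the right-hand side to $\lambda_1(Q)^{1/p}$, and dividing by $\lambda_1(Q)^{1/p}$ closes the estimate. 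The companion inequality \eqref{JN 2} follows from the mirror argument: apply \eqref{sparseop} with $w = \lambda'_1 \in A_{p'}$ and exponent $p'$, then invoke the dual weight identity $\nu^{p'}\lambda'_1 = \lambda'_2$, which holds because $p'/p = 1/(p-1)$ forces $\lambda_1^{p'/p}\lambda_1^{-1/(p-1)} = 1$ and $\lambda_2^{-p'/p} = \lambda'_2$.

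The argument encounters no serious obstacle; the key structural observation is that combining Lemma \ref{lem b-bQ} with Cauchy--Schwarz manufactures exactly the sparse operator $\mathcal A_{\tilde{\mathcal S}_Q}$ acting on $\nu\chi_Q$, at which point the $A_p$ boundedness \eqref{sparseop} closes the loop. The one place that deserves care is the bookkeeping of the two weight identities $\nu^p\lambda_2 = \lambda_1$ and $\nu^{p'}\lambda'_1 = \lambda'_2$, both of which arise naturally from $\nu = \lambda_1^{1/p}\lambda_2^{-1/p}$ and $p'/p = 1/(p-1)$ and are what makes the Bloom-type weight pairing work seamlessly with the sparse bound.
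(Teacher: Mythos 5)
Your proof is correct, and it takes a genuinely different — and substantially shorter — route than the paper's. The paper proves \eqref{JN 1} and \eqref{JN 2} by introducing the Haar paraproducts $\Pi_b$ and $\Pi_b^*$, establishing their two-weight $L^p$ boundedness via a dyadic $H^1$-$BMO$ duality argument (stopping-time decomposition into level sets $\Omega_k$ of the square function, the claim \eqref{eeeeee claim}, the bilinear square-function factorization, etc.), and then testing the identity $b - \langle b\rangle_Q = \Pi_b \mathbf{1}_Q - \Pi_b^*\mathbf{1}_Q$ on indicators. Your proof bypasses all of that: you feed the singleton $\{Q\}$ into Lemma~\ref{lem b-bQ} to get the pointwise domination $|b(x)-b_Q| \lesssim \sum_{R\subset Q} \Omega(b,R)\chi_R(x)$, observe via Cauchy--Schwarz that $\Omega(b,R) \le \|b\|_{BMO^2_{\mcd}(\nu)}\,\nu(R)/\mu(R)$, which manufactures exactly $\mathcal A_{\tilde{\mathcal S}_Q}(\nu\chi_Q)$, and then close with a single invocation of \eqref{sparseop} at the pair $(w,q)=(\lambda_2,p)$ for \eqref{JN 1} and $(w,q)=(\lambda'_1,p')$ for \eqref{JN 2}, using the weight cancellations $\nu^p\lambda_2=\lambda_1$ and $\nu^{p'}\lambda'_1=\lambda'_2$. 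Both identities and the requisite fact that $\lambda_1\in A_p \Leftrightarrow \lambda'_1\in A_{p'}$ are routine, and a subfamily of a sparse family is sparse, so the application of \eqref{sparseop} is legitimate. Your approach buys brevity and conceptual clarity (it is built entirely from tools the paper has already set up — Lemma~\ref{lem b-bQ} and \eqref{sparseop} — rather than a parallel paraproduct machine); the paper's paraproduct route is heavier but also yields the operator bounds \eqref{E:PibUBd}--\eqref{E:PibStarUBd}, which have independent interest. One cosmetic point: Definition~\ref{D:Sparse} takes $\eta\in(0,1)$, so strictly speaking the singleton should be called $\eta$-sparse for any fixed $\eta<1$ rather than ``$1$-sparse''; this does not affect the argument.
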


\begin{proof}
Suppose $b \in \mathrm{BMO}_{\nu}\left(X\right)$ and $\mathcal D$ is an arbitrary dyadic system in $X$.

The paraproduct operator with symbol function $b$, and its dual,  are defined by
\begin{align*}
\Pi_b&\equiv \sum_{Q\in\mathcal{D}} \widehat{b}(Q) h_Q\otimes \frac{\mathsf{1}_Q}{\mu(Q)}
 \qquad\textup{and} \qquad
\Pi_b^{\ast}\equiv \sum_{Q\in\mathcal{D}} \widehat{b}(Q) \frac{\mathsf{1}_Q}{\mu(Q)}\otimes h_Q,
\end{align*}
where $\widehat{b}(Q)=\langle b, h_Q\rangle$ and $\{h_Q\}_Q$ is the Haar basis built on $\mathcal D$ (explicitly constructed in \cite{KLPW}).

Note that   $\Pi_b^{\ast}$ is the adjoint of the paraproduct on \textit{unweighted} $L^2(X)$. Using the identification $\left(L^2_w(X)\right)^* \equiv L^2_{w^{-1}}(X)$, with pairing $\left<f,g\right>$ for all $f \in L^2_{w}(X)$ and $g \in L^2_{w^{-1}}(X)$,  we can see that
	$$\text{the adjoint of } \Pi_b : L^p_{\lb_1}(X) \rightarrow L^p_{\lb_2}(X) \text{ is } \Pi_b^* : L^{p'}_{\lambda'_2}(X) \rightarrow L^{p'}_{\lb'_1}(X); $$
	$$\text{and the adjoint of } \Pi^*_b : L^p_{\lb_1}(X) \rightarrow L^p_{\lambda_2}(X) \text{ is } \Pi_b : L^{p'}_{\lambda'_2}(X) \rightarrow L^{p'}_{\lb'_1}(X). $$

Next we argue that
	\begin{align} \label{E:PibUBd}
	\left\|\Pi_b : L^p(\lb_1) \rightarrow L^p(\lb_2)\right\| = \left\|\Pi^*_b : L^q(\lb'_2) \rightarrow L^q(\lb'_1)\right\| &\lesssim
\|b\|_{BMO^2_{\mcd}(\nu)},
\\
 \label{E:PibStarUBd}
	\left\|\Pi^*_b : L^p(\lb_1) \rightarrow L^p(\lb_2)\right\| = \left\|\Pi_b : L^q(\lb'_2) \rightarrow L^q(\lb'_1)\right\| & \lesssim
\|b\|_{BMO^2_{\mcd}(\nu)}.
	\end{align}

The proof is due to duality, exploiting the $ H^1$-$BMO$ duality inequality (\cite{DGKLWY})
to gain the term $ \|b\|_{BMO^2_{\mcd}(\nu)}$. This will leave us with a bilinear square function involving $ f$ and $ g$,
which will be controlled by a product of a maximal function and a linear square function. The details are as follows.
We let $f \in L^p_{\mu}(X)$ and $g \in L^{p'}_{\lb'}(X)$. Then
	\begin{align*}
	|\La \Pi_b f, g\Ra| & = \left| \sum_{Q\in\mcd} \widehat{b}(Q) \La f\Ra_Q \widehat{g}(Q) \right|
		 = |\La b, \Phi\Ra|,
	\end{align*}
	where $  \Phi := \sum_{Q\in\mcd,\ep\neq 1} \La f\Ra_Q \widehat{g}(Q,\ep) h_Q^{\ep}$
	and $ S_{\mcd}\Phi$ is the dyadic square function on spaces of homogeneous type defined as
	\begin{align*} 
S_{\mathcal{D}}f(x):=\bigg[\sum_{Q\in \mathcal{D}}|\widehat{f}(Q) |^2{\mathsf{1}_Q(x)\over \mu(Q)}\bigg]^{1\over 2},
\end{align*}
of which the boundedness was studied in Theorem 6.2 in \cite{DGKLWY}.

Next, we show that
\begin{align}\label{dyadic dual}
 |\La b, \Phi\Ra|
		 \lesssim  \|b\|_{BMO^2_{\mcd}(\nu)} \|S_{\mcd} \Phi\|_{L^1(\nu)}.
\end{align}
In fact,  we write
$$ \La b, \Phi\Ra =\sum_{Q\in\mathcal D} \widehat{b}(Q) \widehat{\phi}(Q) $$
and define
\begin{align}
\Omega_k&:=\big\{ x\in X: S_{\mcd} \Phi(x)  >2^k\big\};\nonumber\\
\widetilde{\Omega}_k&:=\left\{x\in X:\ M_w(\mathsf{1}_{\Omega_k})(x) >{1\over2}\right\};\label{Omegakt}\\
B_k&:=\{ Q \in\mathcal D: w(Q\cap \Omega_k)>{w(Q)}/2,\ w(Q\cap \Omega_{k+1})\leq{w(Q)}/2 \},\label{Bk}
\end{align}
where $M_w$ is the standard weighted Hardy--Littlewood maximal function on $X$ given by
$$ M_wf(x):=\sup_{B \ni x} {1\over w(B)}\int_B |f(y)|\,w(y)d\mu(y) $$
with the supremum is taken over all balls $B\subset X$.
 Then using  H\"older's inequality we have
\begin{align*}
|\La b, \Phi\Ra|&\leq
\bigg|\sum_k \sum_{\substack{ \overline Q\in B_k,\\ \overline Q\ {\rm maximal}}}  \sum_{\substack{  Q\in B_k\\ Q\subset \overline Q }}  \widehat{b}(Q) \widehat{\phi}(Q)\bigg|\\
&\leq
\sum_k \sum_{\substack{ \overline Q\in B_k,\\ \overline Q\ {\rm maximal}}}  \Big(\sum_{\substack{  Q\in B_k\\ Q\subset \overline Q }}  |\widehat{\phi}(Q) |^2 {w(Q)\over \mu(Q)} \Big)^{1\over2} \Big(\sum_{\substack{  Q\in B_k\\ Q\subset \overline Q }}   |\widehat{b}(Q)|^2 {\mu(Q)\over w(Q)} \Big)^{1\over 2}\\
&\leq \|b\|_{BMO^2_{\mcd}(\nu)} \sum_k \sum_{\substack{ \overline Q\in B_k,\\ \overline Q\ {\rm maximal}}}  w(\overline Q  )^{1\over2} \Big(\sum_{\substack{  Q\in B_k\\ Q\subset \overline Q }}  |\widehat{\phi}(Q) |^2 {w(Q)\over \mu(Q)} \Big)^{1\over2}\\
&\leq \|b\|_{BMO^2_{\mcd}(\nu)} \sum_k \bigg(  \sum_{\substack{ \overline Q\in B_k,\\ \overline Q\ {\rm maximal}}}  w(\overline Q  )\bigg)^{1\over2} \bigg( \sum_{\substack{ \overline Q\in B_k,\\ \overline Q\ {\rm maximal}}} \sum_{\substack{  Q\in B_k\\ Q\subset \overline Q }}  |\widehat{\phi}(Q) |^2 {w(Q)\over \mu(Q)} \bigg)^{1\over2}\\
&\leq  \|b\|_{BMO^2_{\mcd}(\nu)} \sum_k   w(\widetilde {\Omega}_k  )^{1\over2} \bigg(\sum_{Q\in B_k}   |\widehat{\phi}(Q) |^2 {w(Q)\over \mu(Q)} \bigg)^{1\over2}.
\end{align*}
Now we claim that
\begin{align}\label{eeeeee claim}
\bigg(\sum_{Q\in B_k}   |\widehat{b}(Q) |^2 {w(Q)\over \mu(Q)} \bigg)^{1\over2} \leq C 2^k w(\widetilde {\Omega}_k  )^{1\over2}.
\end{align}
In fact, by noting that
$$  \int_{\widetilde{\Omega}_k\backslash \Omega_{k+1}} S_{\mcd}\Phi(x)^2 w(x)dx \leq  2^{2k+2}  w(\widetilde {\Omega}_k  )  $$
and that
\begin{align*}
\int_{\widetilde{\Omega}_k\backslash \Omega_{k+1}} S_{\mcd}\Phi(x)^2 w(x)d\mu(x)
&\geq \sum_{Q\in B_k}   |\widehat{b}(Q) |^2 {w\big(Q\cap (\widetilde{\Omega}_k\backslash \Omega_{k+1})  \big)\over \mu(Q)}\geq {1\over 2} \sum_{Q\in B_k}   |\widehat{b}(Q) |^2 {w(Q)\over \mu(Q)},
\end{align*}
we obtain that the claim  \eqref{eeeeee claim} holds. This yields \eqref{dyadic dual}.

Now, $ S_{\mcd}\Phi$ is bilinear in $ f$ and $ g$, and is no more than
	\begin{align*}(S_{\mcd}\Phi(x))^2 & = \sum_{Q\in\mcd} |\La f\Ra_Q|^2 |\widehat{g}(Q)|^2 \frac{\mathsf{1}_Q(x)}{\mu(Q)}\\
	&	 \leq (Mf(x))^2 \sum_{Q\in\mcd,\ep\neq 1} |\widehat{g}(Q,\ep)|^2 \frac{\mathsf{1}_Q(x)}{\mu(Q)}
		 = (Mf(x))^2 (S_{\mcd}g(x))^2.
		 \end{align*}
A straight forward application of H\"older's inequality, and bounds for the maximal and square functions will complete the proof.
	\begin{align*}
	\|S_{\mcd}\Phi\|_{L^1(\nu)} & \leq \int_X (Mf(x)) (S_{\mcd}g(x)) \,\lambda_1(x)^{\frac{1}{p}} \lambda_2(x)^{-\frac{1}{p}}d\mu(x) \\
&		 \leq \|Mf\|_{L^p_{\lambda_1}(X)} \|S_{\mcd}g\|_{L^{p'}_{\lb'_2}(X)} \lesssim \|f\|_{L^p_{\lb_1}(X)} \|g\|_{L^{p'}_{\lb'_2}(X)}.
	\end{align*}

	 This gives us the proof of \eqref{E:PibUBd}.

\smallskip

The second set of inequalities  \eqref{E:PibStarUBd} are similar to the first, by a simple duality argument.

Based on \eqref{E:PibUBd} and \eqref{E:PibStarUBd}, we see that
	\begin{align*}
	\|\Pi_b\mathsf{1}_Q\|_{L^p(\lb_2)} + \|\Pi^*_b\mathsf{1}_Q\|_{L^p(\lb_2)}  & \lesssim  \|b\|_{BMO^2_{\mcd}(\nu)} \lb_1(Q)^{\frac{1}{p}},
\\
	\|\Pi_b\mathsf{1}_Q\|_{L^q(\lb'_1)} + \|\Pi^*_b\mathsf{1}_Q\|_{L^q(\lb'_1)}&\lesssim  \|b\|_{BMO^2_{\mcd}(\nu)} \lb'_2(Q)^{\frac{1}{q}}.
	\end{align*}
Then,  we have for any $Q\in\mcd$:
	\begin{align*}
	\left(\int_Q |b(x) - \La b\Ra_Q|^p\,\lb_2(x)d\mu(x)\right)^{\frac{1}{p}} & = \|\unit_Q (\Pi_b\unit_Q - \Pi_b^*\unit_Q)\|_{L^p(\lb_2)}\\
		& \leq \|\Pi_b\unit_Q\|_{L^p(\lb_2)} + \|\Pi^*_b\unit_Q\|_{L^p(\lb_2)}\\
		& \lesssim  \|b\|_{BMO^2_{\mcd}(\nu)} \lb_1(Q)^{\frac{1}{p}}.
	\end{align*}
This shows that \eqref{JN 1} holds.  Similarly, we get that \eqref{JN 2} holds.
\end{proof}

Based on Lemma \ref{lem JN type}, we have the following
\begin{lem}\label{equivvmo}
Let $p\in(1,\infty)$ and  $\lambda_1,\lambda_2\in A_p$, $\nu:= \lambda_1^{1\over p}\lambda_2^{-{1\over p}}$ and $b \in \mathrm{BMO}_{\nu}\left(X\right)$. Then there exists some constants $c_{0},C_{0} >0$ such that for any dyadic system in $\mathcal D$,
\begin{align}\label{vmocomp}
c_{0}\bigg({1\over \nu(Q)}\int_{Q}\left|b(x)-b_{Q}\right|d\mu(x)\bigg)
&\leq \bigg({1\over \lambda'_2(Q)}\int_{Q}\left|b(x)-b_{Q}\right|^{p'}\lambda'_1(x)d\mu(x)\bigg)^{1\over p'}, \quad\forall Q\in\mathcal D,
\end{align}
and
\begin{align}\label{vmocomp 1}
\sup_{Q\in\mathcal D}\bigg({1\over \lambda'_2(Q)}\int_{Q}\left|b(x)-b_{Q}\right|^{p'}\lambda'_1(x)d\mu(x)\bigg)^{1\over p'} \leq   C_{0}\sup_{Q\in\mathcal D}\bigg( {1\over \nu(Q)}\int_{Q}\left|b(x)-b_{Q}\right|d\mu(x)\bigg),
\end{align}
where $p'$ is the conjugate index of $p$. Similar result holds for the form of the left-hand side of \eqref{JN 1}.
\end{lem}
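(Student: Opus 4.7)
The upper bound \eqref{vmocomp 1} follows almost immediately from Lemma \ref{lem JN type} combined with the weighted John--Nirenberg theorem. Applying \eqref{JN 2} cube-by-cube and then taking $\sup_{Q\in\mathcal{D}}$ gives
$$\sup_{Q\in\mathcal{D}}\bigg(\tfrac{1}{\lambda'_2(Q)}\int_Q|b-b_Q|^{p'}\lambda'_1\,d\mu\bigg)^{1/p'} \lesssim \|b\|_{BMO^2_{\mathcal{D}}(\nu)},$$
and Theorem \ref{T:MW} identifies $\|b\|_{BMO^2_{\mathcal{D}}(\nu)}$ with the supremum on the right-hand side of \eqref{vmocomp 1} (the passage from balls to dyadic cubes being standard via the adjacent dyadic systems of Section~2.2). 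The ``similar result'' corresponding to \eqref{JN 1} is handled identically, reading off \eqref{JN 1} in place of \eqref{JN 2}.

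For the per-cube bound \eqref{vmocomp}, the plan is a direct H\"older decomposition. Writing $|b(x)-b_Q|=\bigl[|b(x)-b_Q|(\lambda'_1)^{1/p'}(x)\bigr]\cdot (\lambda'_1)^{-1/p'}(x)$ and applying H\"older with exponents $p'$ and $p$, together with the algebraic identity $(\lambda'_1)^{-p/p'}=\lambda_1$, yields
$$\int_Q|b-b_Q|\,d\mu \leq \left(\int_Q|b-b_Q|^{p'}\lambda'_1\,d\mu\right)^{1/p'}\lambda_1(Q)^{1/p}.$$
Dividing both sides by $\nu(Q)$ and inserting a factor $\lambda'_2(Q)^{1/p'}/\lambda'_2(Q)^{1/p'}$ on the right reduces \eqref{vmocomp} to the key weight inequality
\begin{equation*}
\lambda_1(Q)^{1/p}\,\lambda'_2(Q)^{1/p'} \leq C\,\nu(Q), \qquad \forall\, Q\in\mathcal{D}.
\end{equation*}

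The main step is this weight inequality, which I plan to prove by combining the $A_p$ conditions on $\lambda_1,\lambda_2$ with a Cauchy--Schwarz argument. Multiplying the two pointwise estimates $\lambda_i(Q)^{1/p}\lambda'_i(Q)^{1/p'}\leq [\lambda_i]_{A_p}^{1/p}\mu(Q)$ (a direct consequence of the $A_p$ definition) for $i=1,2$ gives
$$\lambda_1(Q)^{1/p}\lambda'_2(Q)^{1/p'} \leq ([\lambda_1]_{A_p}[\lambda_2]_{A_p})^{1/p}\,\frac{\mu(Q)^2}{\lambda'_1(Q)^{1/p'}\lambda_2(Q)^{1/p}}.$$
On the other hand, the pointwise identity $\nu^{-1} = (\lambda'_1)^{1/p'}\lambda_2^{1/p}$ together with H\"older gives $\nu^{-1}(Q) \leq \lambda'_1(Q)^{1/p'}\lambda_2(Q)^{1/p}$, while Cauchy--Schwarz applied to $\mu(Q) = \int_Q \nu^{1/2}\nu^{-1/2}\,d\mu$ gives $\mu(Q)^2 \leq \nu(Q)\,\nu^{-1}(Q)$. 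Chaining these bounds produces $\mu(Q)^2/(\lambda'_1(Q)^{1/p'}\lambda_2(Q)^{1/p}) \leq \nu(Q)$, and hence $\lambda_1(Q)^{1/p}\lambda'_2(Q)^{1/p'}\leq ([\lambda_1]_{A_p}[\lambda_2]_{A_p})^{1/p}\nu(Q)$, which is the required estimate. The analogous claim for the form of \eqref{JN 1} is obtained by the symmetric H\"older decomposition, exchanging $(p,\lambda_i)$ with $(p',\lambda'_i)$ throughout.
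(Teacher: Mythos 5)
Your proof is correct and follows essentially the same route as the paper: \eqref{vmocomp 1} via Lemma \ref{lem JN type} (i.e., \eqref{JN 2}) plus Theorem \ref{T:MW}, and \eqref{vmocomp} via the identical H\"older split followed by the weight inequality $\lambda_1(Q)^{1/p}\lambda'_2(Q)^{1/p'}\lesssim\nu(Q)$, which you prove by the same chain of three estimates (the two $A_p$ products, H\"older for $\nu^{-1}$, and Cauchy--Schwarz for $\mu(Q)^2\le\nu(Q)\nu^{-1}(Q)$) that the paper uses, merely written in the opposite order. The only cosmetic slip is calling the $A_p$ estimates for $\lambda_i(Q)^{1/p}\lambda'_i(Q)^{1/p'}$ ``pointwise''; they are per-cube integral bounds.
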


\begin{proof}
Let us now begin the proof of the lemma. Observe that $\lambda'_1 = (\lambda_1)^{\frac{-1}{p-1}} = (\lambda_1)^{\frac{-p'}{p}}$ and $\lambda'_2 = (\lambda_2)^{\frac{-1}{p-1}} = (\lambda_2)^{\frac{-p'}{p}}$. By using Lemma \ref{lem JN type}, we have
\begin{align}
    &\bigg({1\over \lambda'_2(Q)}\int_{Q}\left|b(x)-b_{Q}\right|^{p'}\lambda'_1(x)d\mu(x)\bigg)^{1\over p'}\\
   & \leq \sup_{Q\in \mathcal D}\left(\frac{1}{\nu(Q)}\int_{Q}|b(x)-b_Q|^2\nu^{-1}(x)d\mu(x)\right)^{1\over 2}\nonumber\\
   & \leq \sup_{Q\in \mathcal D}\frac{1}{\nu(Q)}\int_{Q}|b(x)-b_Q|d\mu(x),\nonumber
  \end{align}
  where the last step follows from the Standard weighted version of John--Nirenberg inequality in Theorem \ref{T:MW}.

Now let us proceed to prove the other direction. Note that
\begin{align*}
&{1\over \nu(Q)}\int_{Q}\left|b(x)-b_{Q}\right|d\mu(x) \\
&= {1\over \nu(Q)}\lambda'_{2}(Q)^{1\over p'}\lambda'_{2}(Q)^{-1\over p'}\int_{Q}\left|b(x)-b_{Q}\right|{\lambda'_{1}}^{1\over p'}(x){\lambda'_{1}}^{-1\over p'}(x)d\mu(x)\\
&\leq {\lambda'_{2}(Q)^{1\over p'}\over \nu(Q)}\bigg({1\over \lambda'_2(Q)}\int_{Q}\left|b(x)-b_{Q}\right|^{p'}\lambda'_1(x)d\mu(x)\bigg)^{1\over p'}\bigg(\int_{Q}{\lambda'_{1}}^{-p\over p'}(x)d\mu(x)\bigg)^{1\over p}\nonumber\\
&\leq {\lambda_1(Q)^{1\over p}\lambda'_{2}(Q)^{1\over p'}\over \nu(Q)}\bigg({1\over \lambda'_2(Q)}\int_{Q}\left|b(x)-b_{Q}\right|^{p'}\lambda'_1(x)d\mu(x)\bigg)^{1\over p'}.
\end{align*}
Since $\lambda_1,\lambda_2\in A_p$, $\nu=\lambda_1^{1\over p}\lambda_2^{-{1\over p}}\in A_2$, we have
\begin{align*}
{1\over \nu(Q)}&\lesssim {\nu^{-1}(Q)\over\mu(Q)^2} = {1\over\mu(Q)^2}\int_Q \lambda_1^{-{1\over p}}(x)\lambda_2^{1\over p}(x)d\mu(x)\\
&\leq {1\over\mu(Q)^2} \bigg(\int_Q \lambda_1^{-{p'\over p}}(x)d\mu(x)\bigg)^{1\over p'} \bigg(\int_Q \lambda_2(x)d\mu(x)\bigg)^{1\over p}\\
&\lesssim {1\over \lambda_1(Q)^{1\over p}\lambda'_{2}(Q)^{1\over p'}}.
\end{align*}
This implies that
\begin{align*}
{1\over \nu(Q)}\int_{Q}\left|b(x)-b_{Q}\right|d\mu(x)
\lesssim \bigg({1\over \lambda'_2(Q)}\int_{Q}\left|b(x)-b_{Q}\right|^{p'}\lambda'_1(x)d\mu(x)\bigg)^{1\over p'}.
\end{align*}

This completes the proof of the lemma gives us the desired equivalence.
\end{proof}
\begin{defn}\label{def H1}
Let $p\in(1,\infty)$ and  $\lambda_1,\lambda_2\in A_p$, $\nu:= \lambda_1^{1\over p}\lambda_2^{-{1\over p}}$. We introduce the following 3 versions of weighted atoms:

\smallskip
{\rm(1)} supp\,$a(x)\subset B$, $\displaystyle \int_B a(x)\, d\mu(x)=0$, $\displaystyle  \|a\|_{L^2_\nu(X)}\leq \nu(B)^{-{1\over2}}$;

\smallskip
{\rm(2)} supp\,$a(x)\subset B$, $\displaystyle \int_B a(x)\, d\mu(x)=0$, $\displaystyle  \|a\|_{L^{p'}_{\lambda'_2}(X)}\leq \lambda_1(B)^{-{1\over p}}$;

\smallskip
{\rm(3)} supp\,$a(x)\subset B$, $\displaystyle \int_B a(x)\, d\mu(x)=0$, $\displaystyle  \|a\|_{L^{p}_{\lambda_1}(X)}\leq \lambda'_2(B)^{-{1\over p'}}$.

\smallskip
Then we define $H^1_{\nu,atom}(X)= \{f=\sum_{j}\beta_ja_j\}$, where each $a_j$ is an atom in the form {\rm(1)} and $\sum_j|\beta_j|<\infty$. Moreover, $\|f\|_{H^1_{\nu,atom}(X)}$ is taken to be the infimum of $\sum_j|\beta_j|$ for all possible representation $f=\sum_{j}\beta_ja_j$. Similarly one can define $H^1_{\lambda'_1,\lambda'_2,atom}(X)$ and $H^1_{\lambda_1,\lambda_2,atom}(X)$ that link the the atoms in Case {\rm(2)} and Case {\rm(3)}, respectively.

Moreover, the dyadic version of atoms and atomic Hardy spaces associated with an arbitrary dyadic system $\mathcal D$ in $X$ is defined via replacing the ball $B$ by a dyadic cube $Q\in \mathcal D$ as in ${\rm(1)}-{\rm(3)}$ above. We denote these dyadic atomic Hardy spaces by $H^1_{\nu,atom,d}(X), H^1_{\lambda_1,\lambda_2,atom,d}(X)$ and $H^1_{\lambda'_1,\lambda'_2,atom,d}(X)$.
\end{defn}

\begin{lem}\label{lem H1}
Let $p\in(1,\infty)$ and  $\lambda_1,\lambda_2\in A_p$, $\nu:= \lambda_1^{1\over p}\lambda_2^{-{1\over p}}$. Then $H^1_{\nu}(X)=H^1_{\nu,atom}(X)= H^1_{\lambda_1,\lambda_2,atom}(X)=H^1_{\lambda'_1,\lambda'_2,atom}(X)$.
\end{lem}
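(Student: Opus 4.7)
The strategy is to reduce the four-way equality to the equivalence of weighted BMO seminorms already established in Lemma~\ref{lem JN type} and Lemma~\ref{equivvmo}, via the duality between atomic Hardy spaces and BMO.

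\emph{Step 1: the classical atomic decomposition.} The identification $H^1_\nu(X)=H^1_{\nu,atom}(X)$ is the standard atomic decomposition for the weighted Hardy space. Since $\nu\in A_2\subset A_\infty$, the measure $\nu\,d\mu$ is doubling on $(X,d,\mu)$, and the Coifman--Weiss theory on spaces of homogeneous type (formulated in the Hyt\"onen--Kairema dyadic framework) yields the $(1,2,0)$-atomic characterization in terms of type-(1) atoms.

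\emph{Step 2: compute the dual of each atomic space.} For a type-(i) atom $a$ supported in a ball $B$ with $\int a=0$, cancellation and H\"older's inequality give
\begin{equation*}
|\langle a,b\rangle|
=\Big|\int_B\bigl(b(x)-b_B\bigr)a(x)\,d\mu(x)\Big|
\le \|a\|_{L^{q_i}_{v_i}(X)}\Big(\int_B |b-b_B|^{q_i'}\,v_i^{1-q_i'}\,d\mu\Big)^{1/q_i'},
\end{equation*}
where $(q_1,v_1)=(2,\nu)$, $(q_2,v_2)=(p',\lambda'_2)$, and $(q_3,v_3)=(p,\lambda_1)$. A direct exponent chase using $\nu=\lambda_1^{1/p}\lambda_2^{-1/p}$ yields $v_2^{1-q_2'}=\lambda_2$ and $v_3^{1-q_3'}=\lambda'_1$; combining with the atomic size bound, $a$ becomes a bounded functional with respect to the three mean-oscillation seminorms of $b$ normalized respectively by $\nu(B)$, $\lambda_1(B)$, and $\lambda'_2(B)$. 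Lemma~\ref{lem JN type} and Lemma~\ref{equivvmo} (together with the $\lambda_1\leftrightarrow\lambda_2$, $p\leftrightarrow p'$ swap of the latter, obtained by repeating the argument verbatim) show that all three of these seminorms are comparable to $\|b\|_{BMO_\nu(X)}$.

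\emph{Step 3: transfer the equivalence to atomic Hardy spaces.} From Step~2 each type-(i) atom defines a functional on $BMO_\nu(X)$ of uniformly bounded norm. For the inclusions $H^1_{\lambda_1,\lambda_2,atom}(X),\, H^1_{\lambda'_1,\lambda'_2,atom}(X)\subseteq H^1_{\nu,atom}(X)$ I verify directly that each type-(2) or type-(3) atom has bounded $H^1_\nu$-norm: splitting $\|\mathcal{M}a\|_{L^1_\nu}$ into the contribution on $2B$ and on its complement, H\"older's inequality together with the $L^{q_i}_{v_i}$-boundedness of the grand maximal function $\mathcal{M}$ (using $\lambda'_2\in A_{p'}$, respectively $\lambda_1\in A_p$) controls the local part, while the mean-zero condition $\int a=0$ and a standard pointwise/kernel estimate handle the tail. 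The reverse inclusions are obtained analogously by passing through a weighted Calder\'on--Zygmund decomposition of a type-(1) atom adapted to $\lambda_2$ (respectively $\lambda'_1$), which produces type-(2) (respectively type-(3)) pieces with the correct non-canonical normalization and controlled $\ell^1$-sum. \emph{The main obstacle} is exactly this last point: assembling a type-(2) or type-(3) atomic decomposition of a given type-(1) atom whose stopping-cube sizes match the prescribed $L^{p'}_{\lambda'_2}$- or $L^p_{\lambda_1}$-normalizations. Every ingredient---sparse stopping times, weighted maximal function bounds, the $A_p$ property---is standard, but the bookkeeping of weights through the decomposition is the principal technical burden.
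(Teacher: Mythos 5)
Your proposal diverges substantially from the paper's argument, and the key step is not actually carried out. The paper works entirely in the dyadic Haar framework: it first reduces to the dyadic Hardy space $H^1_{\nu,d}(X)$ (using the Kairema--Li--Pereyra--Ward fact that $H^1$ is the sum of finitely many dyadic Hardy spaces), and then, for $f$ with $S_{\mathcal D}f\in L^1_\nu$, extracts atoms directly from the level sets of the dyadic square function. Concretely it uses the sets $\Omega_k,\widetilde\Omega_k,B_k$ already introduced in the proof of Lemma~\ref{lem JN type}, groups the Haar coefficients of $f$ by maximal cubes $\overline Q\in B_k$, and defines $a_{k,\overline Q}$ as the corresponding partial Haar sum with a normalization $\beta_{k,\overline Q}$ chosen to match the target atomic space; the atom size bound is verified by testing against $g\in L^{p'}_{\lambda'_1}$ via the square function, and the $\ell^1$ summability of $\beta_{k,\overline Q}$ follows from the level-set structure. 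The converse inclusions are checked by a direct $S_{\mathcal D}$ bound on a single atom.

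There are two genuine gaps in your proposal. First, and most seriously, the forward inclusions $H^1_{\nu}\subseteq H^1_{\lambda'_1,\lambda'_2,atom}$ and $H^1_{\nu}\subseteq H^1_{\lambda_1,\lambda_2,atom}$ are precisely the step you flag as ``the main obstacle'' and leave unexecuted: a weighted Calder\'on--Zygmund decomposition of a type-(1) atom into type-(2) or type-(3) atoms with controlled $\ell^1$-sum is exactly what needs to be proved, not postponed. The paper avoids the CZ-decomposition-of-an-atom approach altogether by working at the level of general $f\in H^1_{\nu,d}$ and using the square function level sets, so there is no circularity and no weight bookkeeping through a stopping-time construction. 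Second, Step~2 by itself cannot close the argument: equivalence of the dual (BMO-type) seminorms does not immediately give equality of the predual atomic spaces without a density or weak-$*$ argument, and your proposal does not supply one. The paper sidesteps this by constructing the atomic decompositions explicitly rather than arguing through duality. If you want to salvage your route you would need to actually produce the weighted CZ decomposition with matching normalizations, or better, replace Step~3 by the paper's square-function level-set decomposition, which is both cleaner and already set up by the earlier lemmas.
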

\begin{proof}
By noting that the Hardy space is the sum of a finite dyadic Hardy spaces \cite{KLPW}, it suffices to show the dyadic version associated with an arbitrary dyadic system $\mathcal D$ in $X$. That is, it suffices to show
$H^1_{\nu,d}(X)=H^1_{\nu,atom,d}(X)= H^1_{\lambda_1,\lambda_2,atom,d}(X)=H^1_{\lambda'_1,\lambda'_2,atom,d}(X)$.

For every $f\in H^1_{\nu,d}(X)$, we have that $S_{\mathcal D}f \in L^1_\nu(X)$. Hence,
\begin{align*}
f &=\sum_Q \widehat{f}(Q) h_Q =\sum_k \sum_{\substack{ \overline Q\in B_k,\\ \overline Q\ {\rm maximal}}}  \sum_{\substack{  Q\in B_k\\ Q\subset \overline Q }}\widehat{f}(Q) h_Q=\sum_k \sum_{\substack{ \overline Q\in B_k,\\ \overline Q\ {\rm maximal}}} \beta_{k,\overline Q}\ a_{k,\overline Q},
\end{align*}
where $$ \beta_{k,\overline Q}=\lambda'_2(\overline Q)^{{1\over p'}} \Bigg\|\bigg(\sum_{\substack{  Q\in B_k\\ Q\subset \overline Q }}|\widehat{f}(Q)|^2 \frac{\mathsf{1}_Q(x)}{\mu(Q)}\bigg)^{1\over2}\Bigg\|_{L^p_{\lambda_1}(X)} $$
and $$  a_{k,\overline Q} ={1\over \beta_{k,\overline Q}} \sum_{\substack{  Q\in B_k\\ Q\subset \overline Q }}\widehat{f}(Q) h_Q.$$
It is easy to see that each $ a_{k,\overline Q}$ satisfies the support condition and cancellation condition.
Now we have
\begin{align*}
&\|a_{k,\overline Q}\|_{L^{p}_{\lambda_1}(X)}
= \sup_{\|g\|_{ L^{p'}_{\lambda'_1}(X)}=1} |\langle a_{k,\overline Q}, g\rangle|
= \sup_{\|g\|_{ L^{p'}_{\lambda'_1}(X)}=1} \bigg| {1\over \beta_{k,\overline Q}} \sum_{\substack{  Q\in B_k\\ Q\subset \overline Q }}\widehat{f}(Q)\widehat{g}(Q) \bigg|\\
&= \sup_{\|g\|_{ L^{p'}_{\lambda'_1}(X)}=1} \bigg| {1\over \beta_{k,\overline Q}} \int_{X} \sum_{\substack{  Q\in B_k\\ Q\subset \overline Q }}\widehat{f}(Q)\widehat{g}(Q)  \frac{\mathsf{1}_Q(x)}{\mu(Q)} \lambda_1^{1\over p}(x) \lambda_1^{-{1\over p}}(x)d\mu(x)\bigg|\\
&\leq \sup_{\|g\|_{ L^{p'}_{\lambda'_1}(X)}=1} \bigg| {1\over \beta_{k,\overline Q}} \int_{X} \bigg(\sum_{\substack{  Q\in B_k\\ Q\subset \overline Q }}|\widehat{f}(Q)|^2 \frac{\mathsf{1}_Q(x)}{\mu(Q)}\bigg)^{1\over2}\bigg(\sum_{\substack{  Q\in B_k\\ Q\subset \overline Q }}|\widehat{g}(Q)|^2  \frac{\mathsf{1}_Q(x)}{\mu(Q)}\bigg)^{1\over2} \lambda_1^{1\over p}(x) \lambda_1^{-{1\over p}}(x)d\mu(x)\bigg|\\
&\leq \sup_{\|g\|_{ L^{p'}_{\lambda'_1}(X)}=1}  {1\over \beta_{k,\overline Q}} \Bigg\|\bigg(\sum_{\substack{  Q\in B_k\\ Q\subset \overline Q }}|\widehat{f}(Q)|^2 \frac{\mathsf{1}_Q(x)}{\mu(Q)}\bigg)^{1\over2}\Bigg\|_{L^p_{\lambda_1}(X)} \|S_{\mathcal D}(g)\|_{L^{p'}_{\lambda'_1}(X)}\\
&\leq \lambda'_2(\overline Q)^{-{1\over p'}}.
\end{align*}
Next we note that for each $\overline Q\in B_k, \overline Q\ {\rm maximal}$,
\begin{align*}
\int_{ \widetilde \Omega_k\backslash \Omega_{k+1} }\bigg(\sum_{\substack{  Q\in B_k\\ Q\subset \overline Q }}|\widehat{f}(Q)|^2 \frac{\mathsf{1}_Q(x)}{\mu(Q)}\bigg)^{p\over2}\lambda_1(x)d\mu(x)
&\leq\int_{ (\widetilde \Omega_k\backslash \Omega_{k+1})\cap \overline Q }S_{\mathcal D}(f)^p(x)\lambda_1(x)d\mu(x)\\
&\leq 2^{p(k+1)}\lambda_1(\overline Q),
\end{align*}
that is, $$\Bigg\|\bigg(\sum_{\substack{  Q\in B_k\\ Q\subset \overline Q }}|\widehat{f}(Q)|^2 \frac{\mathsf{1}_Q(x)}{\mu(Q)}\bigg)^{1\over2}\Bigg\|_{L^p_{\lambda_1}(X)}\leq 2^{k+1}\lambda_1(\overline Q)^{1\over p}.$$

This gives that
\begin{align*}
\sum_k  \sum_{\substack{ \overline Q\in B_k,\\ \overline Q\ {\rm maximal}}} |\beta_{k,\overline Q}|
&=\sum_k  \sum_{\substack{ \overline Q\in B_k,\\ \overline Q\ {\rm maximal}}} \lambda'_2(\overline Q)^{{1\over p'}} \Bigg\|\bigg(\sum_{\substack{  Q\in B_k\\ Q\subset \overline Q }}|\widehat{f}(Q)|^2 \frac{\mathsf{1}_Q(x)}{\mu(Q)}\bigg)^{1\over2}\Bigg\|_{L^p_{\lambda_1}(X)}
\\
&\leq \sum_k 2^{k+1} \sum_{\substack{ \overline Q\in B_k,\\ \overline Q\ {\rm maximal}}}\lambda'_2(\overline Q)^{{1\over p'}} \lambda_1(\overline Q)^{1\over p}\\
&\lesssim \sum_k 2^{k} \sum_{\substack{ \overline Q\in B_k,\\ \overline Q\ {\rm maximal}}}\nu(\overline Q)
\lesssim \sum_k 2^{k} \nu(\widetilde \Omega_k)\\
&\lesssim \|f\|_{H^1_{\nu,d}(X)}.
\end{align*}
Hence, we see that $H^1_{\nu,d}(X)\subset H^1_{\lambda'_1,\lambda'_2,atom}(X)$.

The other direction is much simpler as we just need to check the uniform boundedness of $S_{\mathcal D}$ on each dyadic atom of the form in Case (2). To be more precise, let $Q$ be a dyadic cube in $\mathcal D$ and supp\,$a(x)\subset Q$, $\displaystyle \int_Q a(x)\, d\mu(x)=0$, $\displaystyle  \|a\|_{L^{p'}_{\lambda'_2}(X)}\leq \lambda_1(Q)^{-{1\over p}}$. Then by cancellation of $a$, we see that
\begin{align*}
S_{\mathcal D}(a)(x) =\bigg[\sum_{Q'\in \mathcal{D}, Q'\subset Q}|\widehat{a}(Q') |^2{\mathsf{1}_{Q'}(x)\over \mu(Q')}\bigg]^{1\over 2}.
\end{align*}
Hence
\begin{align*}
\|S_{\mathcal D}(a)\|_{L^1_\nu(X)}& =\|S_{\mathcal D}(a)\|_{L^1_\nu(Q)}= \int_{Q} S_{\mathcal D}(a)(x) \lambda_1^{1\over p}(x)\lambda_2^{-{1\over p}}(x) d\mu(x) \\
 &\leq \bigg(\int_{Q} S_{\mathcal D}(a)(x)^{p'}\lambda_2^{-{p'\over p}}(x)d\mu(x) \bigg)^{1\over p'} \bigg( \int_{Q} \lambda_1(x)d\mu(x) \bigg)^{1\over p}\\
 &\lesssim  \|a\|_{L^{p'}_{\lambda'_2}(X)} \lambda_1(Q)^{{1\over p}}\\
 &\lesssim1.
\end{align*}
This implies that $H^1_{\lambda'_1,\lambda'_2,atom}(X)\subset H^1_{\nu,d}(X)$. Thus, we see that
$H^1_{\nu,d}(X)= H^1_{\lambda'_1,\lambda'_2,atom}(X)$ and they have equivalent norms.

By using similar argument, we can obtain the equivalence of the other two Hardy spaces.
\end{proof}

We now show Proposition \ref{prop VMO}.
\begin{proof}[Proof of Proposition \ref{prop VMO}]
From Lemma \ref{equivvmo}, we see that the norms for the definitions of $\mathrm{VMO}_{\nu}(X)$, $\mathrm{VMO}_{\lambda'_1,\lambda'_2}(X)$ and $\mathrm{VMO}_{\lambda'_1,\lambda'_2}(X)$ are equivalent by noting that the Hardy space is the sum of a finite dyadic Hardy spaces \cite{KLPW}.  Moreover, using the standard argument via tent space or discrete sequence spaces, we see that the dual of $\mathrm{VMO}_{\nu}(X)$ is $H^1_{\nu}(X)$, the dual of $\mathrm{VMO}_{\lambda'_1,\lambda'_2}(X)$ is $H^1_{\lambda'_1,\lambda'_2}(X)$ and the dual of $\mathrm{VMO}_{\lambda_1,\lambda_2}(X)$ is $H^1_{\lambda_1,\lambda_2}(X)$. While in Lemma \ref{lem H1} we see that the three Hardy spaces are equivalent. Hence, we obtain that the three VMO spaces are equivalent.
\end{proof}

\section{Proof of Main Theorem \ref{thm main3}}

\subsection{Proof of Theorem \ref{thm main3} (i): $b\in {\rm VMO}_{\nu}(X) \Rightarrow \forall \mathcal S, \mathcal T_{\mathcal S,b} {\rm\ compact}  $}
 We begin to prove (i) of Theorem \ref{thm main3}.
  We denote by $\mathcal T_{\mathcal S,b}^*$ the adjoint operator of $T_{\mathcal S,b}$.

Recall from \cite{rm1}, we have that $\mathcal T_{\mathcal S,b}^*$ is bounded from $L^{p}_{\lambda_1}(X)$ to $L^p_{\lambda_2}(X)$. So we have $\mathcal T_{\mathcal S,b}$ is compact if and only if $\mathcal T_{\mathcal S_t,b}^*$ is compact from $L^{p}_{\lambda_1}(X)$ to $L^p_{\lambda_2}(X)$. Hence to prove Theorem \ref{thm main3}, it is enough to show that $\mathcal T_{\mathcal S,b}^*$ is compact from $L^{p}_{\lambda_1}(X)$ to $L^p_{\lambda_2}(X)$.

Our approach can be briefly summarized as  the following.
We decompose $\mathcal T_{\mathcal S,b}^* f(x) = T_{\epsilon, N_{\epsilon}}f(x) + T_{\epsilon} f(x)$, for all
$\epsilon>0$.

We will show that for all $\epsilon >0 $, there exists $N_{\epsilon}$ such that $ T_{\epsilon, N_{\epsilon}}f(x)$ is a sparse operator with finite range,
i.e.,
\begin{equation*}
  T_{\epsilon, N_{\epsilon}}f(x) = \sum_{k=1}^{N_\epsilon}a_k \chi_{Q_k(x)}
\end{equation*}
and we will show that the norm of $T_{\epsilon} f(x)$ is at most $\epsilon$, i.e.,

\begin{equation*}
\|T_{\epsilon} f(x)\|_{{L^p_{\lambda_2}}(X)} \leq \epsilon\|f\|_{{L^p_{\lambda_1}}(X)}.
\end{equation*}

Recall that $\mathcal T_{\mathcal S,b}^*(|f|)(x) $ is given by the following equation

\begin{equation}\label{estimatesi}
\mathcal T_{\mathcal S,b}^*(|f|)(x) =\sum_{Q\in \mathcal S} \bigg( {1\over\mu(Q)} \int_Q |b(y)-b_Q||f(y)|d\mu(y)\bigg) \chi_Q(x).
\end{equation}

%

For $\epsilon>0$, from Definitions~\ref{lemvmo}, \ref{lemvmo2 prime}, \ref{lemvmo2} and Proposition~\ref{prop VMO}, we choose number $N>0$, $\delta>0$ and cube $Q_N$ side length $N$ such that
$$
 \frac{1}{\nu(Q)}\int_{Q}|b(x)-b_{Q}|d\mu(x)<\epsilon,\quad\quad
\left(\frac{1}{\lambda_1(Q)}\int_{Q}|b(x)-b_{Q}|^p\lambda_2(x)d\mu(x)\right)^{\frac{1}{p}}<\epsilon,
$$
and
$$
 \left(\frac{1}{\lambda'_2(Q)}\int_{Q}|b(x)-b_{Q}|^{p'}\lambda'_1(x)d\mu(x)\right)^{\frac{1}{p'}}<\epsilon
$$
when $l(Q)>N$, $l(Q)<\delta$ and $Q\cap Q_N=\emptyset$.

 We now write $\mathcal T_{\mathcal S,b}^*(|f|)(x)$ as follows.
 \begin{align}\label{splitone}
\mathcal T_{\mathcal S,b}^*(|f|)(x) &= \sum_{Q \supset Q_N }\bigg(\frac{1}{\mu(Q)}\int_{Q}|b(y)-b_{Q}||f(y)|d\mu(y)\bigg)\chi_{Q}(x)\\
&\qquad+ \sum_{Q \cap  Q_N = \emptyset}\bigg(\frac{1}{\mu(Q)}\int_{Q}|b(y)-b_{Q}||f(y)|d\mu(y)\bigg)\chi_{Q}(x)
\nonumber\\
&\qquad + \sum_{\substack {Q \subset Q_N \\\ l(Q) <\delta}}\bigg(\frac{1}{\mu(Q)}\int_{Q}|b(y)-b_{Q}||f(y)|d\mu(y)\bigg)\chi_{Q}(x)
\nonumber\\
&\qquad+ \sum_{\substack{Q \subset Q_N\\\ l(Q)>\delta}}\bigg(\frac{1}{\mu(Q)}\int_{Q}|b(y)-b_{Q}||f(y)|d\mu(y)\bigg)\chi_{Q}(x)
\nonumber\\
& =: T_1f(x) + T_2f(x) + T_3f(x) + T_4f(x), \nonumber
\end{align}
where all $Q$ are in sparse family $\mathcal S$ and we omit $Q\in \mathcal S$ in each of the summation for brevity.

Given some $\epsilon> 0$, to obtain the compactness for our sparse operator $\mathcal T_{\mathcal S,b}^*(|f|)(x)$, we will show the norm of  $T_1f(x), T_2f(x), T_3f(x)$ is at most $\epsilon$ and $T_4f(x)$ is a compact operator.
\vspace{0.2 cm}

In fact, by noting that there are only finitely many cubes contained in $Q_N$ such that $\delta < l(Q) < N$, we obtain that  $T_4f(x)$ has finite range and hence it is compact.  



We will now show that the norm of  $T_1f(x), T_2f(x), T_3f(x)$ is at most $\epsilon$.

Let us start with the estimate for the norm of $T_3f(x)$; i.e.,
\begin{equation}\label{t3}
\|T_3f(x)\|_{{L^p_{\lambda_2}}(X)}\leq \epsilon\|f\|_{{L^p_{\lambda_1}}(X)}.
\end{equation}
From Lemma $\ref{lem b-bQ}$, we have the following
\begin{align}\label{estimateForb-bQ}
|b(y)-b_Q|\leq C\sum_{R\in \tilde{\mathcal S}, R\subset Q} \Omega(b,R)\chi_R(y), \quad{\rm a.e.}\ y\in Q.
\end{align}

Recall that $\nu = \lambda_1^{1/p}\lambda_2^{-1/p}$, thus for some $\epsilon >0$, by using \eqref{estimateForb-bQ} we have the following
\begin{align}\label{t3proof}
& T_3f(x) \leq \sum_{\substack {Q \subset Q_N \\\ l(Q) <\delta}}\sum_{\substack{R\in \tilde{\mathcal S}\\\ R\subset Q}}\bigg(\frac{1}{\mu(R)}\int_R|b(z)-b_R|d\mu(z)\frac{1}{\mu(Q)}\int_{R}|f(y)|d\mu(y)\bigg)\chi_Q(x)\\
& = \sum_{\substack {Q \subset Q_N \\\ l(Q) <\delta}}\sum_{\substack{R\in \tilde{\mathcal S}\\\ R\subset Q}}{1\over\nu(R)} \int_R|b(z)-b_R|d\mu(z)\bigg(\frac{1}{\mu(R)}\int_R |f(y)|\nu(R)d\mu(y)\bigg)\frac{1}{\mu(Q)}\chi_Q(x)\nonumber\\
& \leq \epsilon \sum_{\substack {Q \subset Q_N \\\ l(Q) <\delta}}\bigg(\sum_{\substack{R\in \tilde{\mathcal S}\\\ R\subset Q}}|f|_{R}\nu(R)\bigg)\frac{1}{\mu(Q)}\chi_Q(x)\nonumber\\
& \leq \epsilon \sum_{\substack {Q \subset Q_N \\\ l(Q) <\delta}}\frac{1}{\mu(Q)}\bigg(\int_Q \mathcal A_{\tilde{\mathcal S}}(|f|)(y)\nu(y)dy \bigg)\chi_Q(x)\nonumber\\
& \leq \epsilon\mathcal{A}_{\mathcal{S}}\bigg(\mathcal A_{\tilde{\mathcal S}}(|f|)\nu\bigg)(x).\nonumber
\end{align}
Here we have used from Definition~\ref{lemvmo} that for $b \in {\rm VMO}_\nu(X)$ such that when $l(Q)<\delta$, we have $\frac{1}{\nu(Q)}\int_Q|b(y)-b_Q|d\mu(y) < \epsilon$. Also we have used Definition \ref{D:Sparse Operator} to obtain the last equation above.

Now observe that from estimate~\eqref{sparseop} for the boundedness of sparse operator,  we have that for some constant $C$
\begin{equation}\label{eqsparseop}
\|\mathcal{A}_{\mathcal S} f\|_{L^p_{\lambda_{2}}(X)} \leq C [\lambda_2]_{A_p}^{\max\{1,{1\over p-1}\}}\|f\|_{L^p_{\lambda_2}(X)}.
\end{equation}
And thus
\begin{align}\label{finalt3c}
\|T_3f\|_{L^p_{\lambda_{2}}(X)} &\leq \epsilon \|\mathcal{A}_{\mathcal{S}}(\mathcal A_{\tilde{\mathcal S}}(|f|)\nu)\|_{{L^p_{\lambda_{2}}(X)}}\\
&\leq \epsilon [\lambda_2]_{A_p}^{\max\{1,{1\over p-1}\}}\|\mathcal A_{\tilde{\mathcal S}}(|f|)\nu\|_{L^p_{\lambda_{2}}(X)}= \epsilon[\lambda_2]_{A_p}^{\max\{1,{1\over p-1}\}}\|\mathcal A_{\tilde{\mathcal S}}(|f|)\|_{L^p_{\lambda_{1}}(X)}\nonumber\\
& \leq \epsilon([\lambda_1]_{A_p}[\lambda_2]_{A_p})^{\max\{1,{1\over p-1}\}}\|f\|_{L^p_{\lambda_1}(X)}.\nonumber
\end{align}
Then this finishes the proof for the control of the norm of $T_3f(x)$.


For $T_2f$, recall that
\begin{equation}
  T_2f(x) =:  \sum_{Q \cap Q_N = \emptyset}\bigg(\frac{1}{\mu(Q)}\int_{Q}|b(y)-b_{Q}||f(y)|d\mu(y)\bigg)\chi_{Q}(x).
\end{equation}
Following similar approach in the estimate for $T_3f$, we write $|b(y)-b_Q|$ as in \eqref{estimateForb-bQ}. Since $Q\cap Q_N = \emptyset$ and $R \subset Q$, we have for all $R \in \mathcal{\tilde{S}}$ in \eqref{estimateForb-bQ}, $R\cap Q_N = \emptyset$. According to Definition~\ref{lemvmo}, we have that
${1\over\nu(R)} \int_R|b(x)-b_R|d\mu(x) \leq \epsilon$.
Then following the same arguments in the estimate of the norm $T_3f(x)$, we obtain similar control for the norm of $T_2f(x)$, i.e.,
\begin{equation}\label{T2c}
 \|T_2(f)\|_{{L^p_{\lambda_2}}(X)}\leq \epsilon\|f\|_{{L^p_{\lambda_1}}(X)}.
\end{equation}

Now let us show the control for the norm of $T_1f(x)$, recall that
\begin{equation}
T_1f(x) = \sum_{Q \supset Q_N}\bigg(\frac{1}{\mu(Q)}\int_{Q}|b(y)-b_{Q}||f(y)|d\mu(y)\bigg)\chi_{Q}(x).
\end{equation}

We will start with a collection of sparse dyadic cubes $Q =  Q_1 \supset Q_2 \supset Q_3 \supset Q_4\cdots \supset Q_{\tau_{Q}} \supset Q_{\tau_{Q}+1}= Q_N$,
where $Q_i$ is the ``parent" of $Q_{i+1}$, $i= 1,2, \ldots, \tau_{Q}$. For the sake of the sparse property, if the parent of $Q_{i+1}$ has only one child $Q_{i+1}$, we should still denote by $Q_{i+1}$ the parent of $Q_{i+1}$ since they are the same dyadic cube indeed. Then repeat the process until we find $Q_{i}$ such that $Q_{i}$ has at least two children and $Q_{i+1}$ is one of them.  For each $Q_i$, $i= 1,2, \ldots, \tau_{Q}$, we denote all its dyadic children except $Q_{i+1}$ by $Q_{i,k}$, $k = 1,2.....,M_{Q_i}$ where $M_{Q_i}+1$ is the number of the children of $Q_i$ and less than uniform constant $M$ in \eqref{eq:children}. Hence for all $i= 0,1,2, \ldots, \tau_{Q}$ and $k = 1,2.....,M_{Q_i}$, $Q_{i,k}\cap Q_N = \emptyset$. Note that $Q_{i+1}$ and $Q_{i,k}$ have equivalent measures since it follows from \eqref{eq:contain} that
$$
\mu(Q_{i+1})\leq \mu(B(Q_i))\leq C\bigg(1+\frac{d(x_{Q_i},x_{Q_{i,k}})}{C_1\delta^{k_i}}\bigg)^n \bigg(\frac{C_1}{c_1\delta}\bigg)^n \mu(B(x_{Q_{i,k}}),c_1\delta^{{k_i}+1})\leq C2^n \bigg(\frac{C_1}{c_1\delta}\bigg)^n \mu(Q_{i,k}).
$$
And thus there exists uniform constant $0<\widetilde \eta <1$ such that $\mu(Q_{i+1})\leq \widetilde \eta \mu(Q_i)$, which ensure the sparse property of the collection of $\{Q_i\}_{i}$.
Then
\begin{align}
T_1f(x) &\leq \sum_{Q\supset Q_N}\bigg(\sum_{i=1}^{\tau_Q}\sum_{k=1}^{M_{Q_i}}{1\over \mu(Q)}\int_{Q_{i,k}}|b(y)-b_{Q_{i,k}}||f(y)|d\mu(y)\bigg)\chi_{Q}(x)\\
&\quad+\sum_{Q\supset Q_N}\bigg({1\over \mu(Q)}\int_{Q_{N}}|b(y)-b_{Q_{N}}||f(y)|d\mu(y)\bigg)\chi_{Q}(x)\nonumber\\
&\quad+\sum_{Q\supset Q_N}\bigg(\sum_{i=1}^{\tau_Q}\sum_{k=1}^{M_{Q_i}}|b_{Q_{i,k}}- b_Q|{1\over \mu(Q)}\int_{Q_{i,k}}|f(y)|d\mu(y)\bigg)\chi_{Q}(x)\nonumber\\
&\quad+\sum_{Q\supset Q_N}|b_{Q_N}-b_{Q}|{1\over \mu(Q)}\int_{Q_{N}}|f(y)|d\mu(y)\chi_{Q}(x)\nonumber\\
&=: I+II+III+IV.\nonumber
\end{align}

Our goal is to control each of these terms in the sum above to obtain the control for the norm of $T_1f(x)$, i.e.,
\begin{equation}\label{t1control}
\|T_1(f)\|_{{L^p_{\lambda_2}}(X)}\leq \epsilon\|f\|_{{L^p_{\lambda_1}}(X)}.
\end{equation}

Let us now begin with the estimate of the norm of $II$. Recall that $\lambda'_{1} = \lambda_1^{-1\over p-1}$ and $\lambda'_{2} = \lambda_2^{-1\over p-1}$.
For an appropriate choice of $g \in L ^{p'}_{\lambda '_2} (X)$ of norm one, we have
\begin{align}
\|II\|_{{L^p_{\lambda_2}}(X)}
&= \bigg\|\sum_{Q\supset Q_N}\bigg({1\over \mu(Q)}\int_{Q_{N}}|b(y)-b_{Q_{N}}||f(y)|d\mu(y)\bigg)\chi_{Q}(x)\bigg\|_{{L^p_{\lambda_2}}(X)}\nonumber\\
&= \sup_{\|g\|_{L^{p'}_{\lambda'_2}}\leq 1 }\bigg|\bigg\langle \sum_{Q\supset Q_N}\bigg({1\over \mu(Q)}\int_{Q_{N}}|b(y)-b_{Q_{N}}||f(y)|d\mu(y)\bigg)\chi_{Q}(x), g(x)\bigg\rangle\bigg|\nonumber\\
&\leq \sum_{Q\supset Q_N}{1\over \mu(Q)}\int_{Q_{N}}|b(y)-b_{Q_{N}}||f(y)|d\mu(y)\int_{Q}|g(x)|d\mu(x)\nonumber\\
&\leq \sum_{Q\supset Q_N}{1\over \mu(Q)}\bigg(\int_{Q_{N}}|b(y)-b_{Q_{N}}|^{p'}\lambda'_1(y)d\mu(y)\bigg)^{1\over p'}\bigg(\int_{Q_{N}}|f(x)|^{p}\lambda_1(x)d\mu(x)\bigg)^{1\over p}\nonumber\\
&\qquad\times \bigg(\int_{Q}|g(x)|^{p'}\lambda'_2(x)d\mu(x)\bigg)^{1\over p'}
\lambda_2(Q)^{1\over p}\nonumber\\
&\leq\sum_{Q\supset Q_N}{1\over \mu(Q)}\bigg({1\over \lambda'_2(Q_N)}\int_{Q_{N}}|b(y)-b_{Q_{N}}|^{p'}\lambda'_1(y)d\mu(y)\bigg)^{1\over p'} \|f\|_{{L^p_{\lambda_1}}(X)}\lambda'_2(Q_N)^{1\over p'}\lambda_2(Q)^{1\over p}\nonumber.
\end{align}

Observe that since $Q\supset Q_N$ and thus $l(Q)>N$, which gives
$$
 \left(\frac{1}{\lambda'_2(Q)}\int_{Q}|b(x)-b_{Q}|^{p'}\lambda'_1(x)d\mu(x)\right)^{\frac{1}{p'}}<\epsilon.
$$
Also recall that $\lambda_2$ is doubling and as $\lambda_2 \in A_p$, there exists some $\sigma>0$ such that $\lambda_2 \in A_{p-\sigma}$ and
\begin{equation}\label{eq:4.14}
  {\lambda_2(Q)\over \lambda_2(Q_N) }  \leq \bigg({\mu(Q)\over \mu(Q_N)}\bigg)^{p-\sigma}[\lambda_2]_{A_p}.
\end{equation}
And since all $Q$ are in sparse family $\mathcal S$, it follows from Corollary~\ref{cor:reversedoubling} that
\begin{equation}\label{eq:4.15}
  \sum_{Q\supset Q_N,Q\in \mathcal S}\bigg({\mu(Q_N)\over \mu(Q)}\bigg)^{\sigma \over p}\leq C.
\end{equation}
So it follows form \eqref{eq:4.14} and \eqref{eq:4.15} that
\begin{align}
    \|II\|_{{L^p_{\lambda_2}}(X)} &\leq \epsilon \|f\|_{{L^p_{\lambda_1}}(X)}\sum_{Q\supset Q_N}{\lambda'_2(Q_N)^{1\over p'}\lambda_2(Q_N)^{1\over p}\over \mu(Q_N)}{\mu(Q_N)\over \mu(Q) }{\lambda_2(Q)^{1\over p}\over \lambda_2(Q_N)^{1\over p}}\nonumber\\
    &\leq \epsilon \|f\|_{{L^p_{\lambda_1}}(X)}\sum_{Q\supset Q_N}[\lambda_2]_{A_p}^{1\over p} {\mu(Q_N)\over \mu(Q) }\bigg({\mu(Q)\over \mu(Q_N) }\bigg)^{p-\sigma\over p}[\lambda_2]_{A_p}^{1\over p}\nonumber\\
     &\leq \epsilon \|f\|_{{L^p_{\lambda_1}}(X)}[\lambda_2]_{A_p}^{2\over p} \sum_{Q\supset Q_N}\bigg({\mu(Q_N)\over \mu(Q)}\bigg)^{\sigma \over p}\nonumber\\
     &\leq \epsilon \|f\|_{{L^p_{\lambda_1}}(X)}[\lambda_2]_{A_p}^{2\over p}.\nonumber
\end{align}

This gives the control for the norm of $II$.

Let us now prove the control for the norm of $I$. We would like to change the order of the summation for $Q$ and $k$. Thus we may assume that $Q_{i,k}=\emptyset$ when $M\geq k>M_{Q_i}$ and the corresponding terms are $0$. So we have the following equality
\begin{align}
I = \sum_{Q\supset Q_N}\bigg(\sum_{i=1}^{\tau_Q}\sum_{k=1}^{M_{Q_i}}{1\over \mu(Q)}\int_{Q_{i,k}}|b(y)-b_{Q_{i,k}}||f(y)|d\mu(y)\bigg)\chi_{Q}(x) \\
 = \sum_{k=1}^{M}\sum_{Q\supset Q_N}\bigg(\sum_{i=1}^{\tau_Q}{1\over \mu(Q)}\int_{Q_{i,k}}|b(y)-b_{Q_{i,k}}||f(y)|d\mu(y)\bigg)\chi_{Q}(x).\nonumber
\end{align}

Fixing $k$, then for each $Q_{i,k}$ where $i = 1,.....,\tau_{Q}$, following similar approach in the estimate for $T_3f$, we write $|b-b_{Q_{i,k}}|$ as in \eqref{estimateForb-bQ}. Since $Q_{i,k}\cap Q_N = \emptyset$ and $R \subset Q_{i,k}$, we have for all $R \in \mathcal{\tilde{S}}$ in \eqref{estimateForb-bQ}, $R\cap Q_N = \emptyset$. Now according to Definition~\ref{lemvmo}, we have that
${1\over\nu(R)} \int_R|b(x)-b_R|d\mu(x) \leq \epsilon$.

So following similar proof as the proof for control of the norm of $T_3f(x)$ as showed in equations \eqref{t3proof} and \eqref{finalt3c}, we obtain the control for the norm of $I$ for some $\epsilon>0$, i.e.,
\begin{equation}\label{Ic}
    \|I\|_{L^p_{\lambda_2}(X)} \leq  \epsilon([\lambda_1]_{A_p}[\lambda_2]_{A_p})^{\max\{1,{1\over p-1}\}}\|f\|_{L^p_{\lambda_1}(X)}.
\end{equation}

We turn to the estimates for the norm of $III$ and $IV$.
%
%
Observe for each fixed $k$, for each $Q_{i,k}$ we will obtain the same estimate independent of the cube $Q_{i,k}$, as the control for the following norm
\begin{align}\label{termc}
\|A_{Q_{i,k}}\|_{L^p_{\lambda_2}(X)}
&\leq  \epsilon([\lambda_1]_{A_p}[\lambda_2]_{A_p})^{\max\{1,{1\over p-1}\}}\|f\|_{L^p_{\lambda_1}(X)},
\end{align}
where $$A_{Q_{i,k}}(x): = \sum_{Q\supset Q_N}\bigg(\sum_{i=1}^{\tau_Q}|b_{Q_{i,k}}- b_Q|{1\over \mu(Q)}\int_{Q_{i,k}}|f(y)|d\mu(y)\bigg)\chi_{Q}(x).$$

Using the equation \eqref{termc}, we obtain for control for the norm of $III$ and $IV$ since the same estimate holds for each $Q_{i,k}$ where $k\in \{1,2,....,M_{Q_i}\}$.

Our goal is now to prove the estimate in equation \eqref{termc}. Recall the definition of $Q_i$ and $Q_{i,k}$: $Q =  Q_1 \supset Q_2 \supset Q_3 \supset Q_4...... Q_{\tau_{Q}+1} = Q_N$,
where $Q_i$ is the ``parent" of $Q_{i+1}$, $i= 1,2, \ldots, \tau_{Q}$. The collection $\{Q_i\}_i$ are sparse. For each $Q_i$, $i= 1,2, \ldots, \tau_{Q}$, we denote all its dyadic children except $Q_{i+1}$ by $Q_{i,k}$, $k = 1,2,.....,M_{Q_i}$.

Observe
\begin{align}\label{controlgenterm}
    &|b_{Q_{i,k}}-b_{Q}|\leq |b_{Q_{i,k}}-b_{Q_{i-1}}|+|b_{Q_{i-1}}-b_{Q_{i-2}}|+........+|b_{Q_{2}}-b_{Q}|\\
   & \leq {1\over \mu(Q_{i,k})}\int_{Q_{i,k}}|b(x)-b_{Q_{i-1}}|d\mu(x) +.... +{1\over \mu(Q_{2})}\int_{Q_{2}}|b(x)-b_{Q}|d\mu(x)\nonumber\\
   &\leq C\sum_{j=1}^{i-1}{\nu(Q_j)\over \mu(Q_{j})}\bigg({1\over \nu(Q_{j})}\int_{Q_{j}}|b(x)-b_{Q_{j}}|d\mu(x)\bigg)\nonumber\\
   &\leq C\epsilon\sum_{j=1}^{i-1}{\nu(Q_j)\over \mu(Q_{j})}.\nonumber
\end{align}

In the last step above we used the fact that $l(Q_j)> N$ for all $j\in \{0,1,....,i-1\}$ because for all these $j $ we have $Q_j\supset Q_N$.

Using Equation \eqref{controlgenterm} we get the following
\begin{align}\label{termcproof}
A_{Q_{i,k}}
&\leq C\sum_{Q\supset Q_N}\sum_{i=1}^{\tau_Q}\sum_{j=1}^{i-1}\epsilon{\nu(Q_j)\over \mu(Q_{j})}{1\over \mu(Q)}\int_{Q_{i,k}}|f(y)|d\mu(y)\chi_{Q}(x).
\end{align}

Hence we have
\begin{align}\label{controlforiii}
&\|A_{Q_{i,k}}\|_{L^p_{\lambda_2}(X)} \\
&\leq \bigg\|\sum_{Q\supset Q_N}\sum_{i=1}^{\tau_Q}\sum_{j=1}^{i-1}\epsilon{\nu(Q_j)\over \mu(Q_{j})}{1\over \mu(Q)}\int_{Q_{i,k}}|f(y)|d\mu(y)\chi_{Q}(x)\bigg\|_{L^p_{\lambda_2}(X)}\nonumber\\
& \leq \sup_{g\in {L^{p'}_{\lambda'_2}}(X) }\bigg|\bigg\langle \sum_{Q\supset Q_N}\sum_{i=1}^{\tau_Q}\sum_{j=1}^{i-1}\epsilon{\nu(Q_j)\over \mu(Q_{j})}{1\over \mu(Q)}\int_{Q_{i,k}}|f(y)|d\mu(y)\chi_{Q}(x), g(x)\bigg\rangle \bigg|\nonumber\\
&\leq \epsilon \bigg(\sum_{Q\supset Q_N}\sum_{i=1}^{\tau_Q}\sum_{j=1}^{i-1}\bigg({\nu(Q_j)\over \mu(Q_{j})}\bigg)^p\bigg(\int_{Q_{i,k}}|f(y)|d\mu(y)\bigg)^p{1\over \mu(Q)^p}\bigg({\mu(Q)\over \mu(Q_{j,k})}\bigg)^{p\sigma'}\lambda_2(Q)\bigg)^{1\over p} \nonumber\\
&\qquad\times \bigg(\sum_{Q\supset Q_N}\sum_{i=1}^{\tau_Q}\sum_{j=1}^{i-1}\bigg({\mu(Q_{i,k})\over \mu(Q)}\bigg)^{p'\sigma'}\bigg(\int_{Q}|g(y)|d\mu(y)\bigg)^{p'}(\lambda_2(Q))^{-p'}\lambda_2(Q)\bigg)^{1\over p'} \nonumber\\
& \leq \epsilon A^{1\over p}B^{1\over p'},\nonumber
\end{align}
where $\sigma' = {\sigma \over 2p}$. Now observe that
\begin{align}\label{controlb}
    & B \leq \sum_{Q\supset Q_N}\bigg[\sum_{i=1}^{\tau_Q} \log\bigg({\mu(Q)\over \mu(Q_i)}\bigg)\bigg({\mu(Q_i)\over \mu(Q)}\bigg)^{p'\sigma'}\bigg]\bigg({1\over \lambda_2(Q)}\int_{Q}|g(y)|d\mu(y)\bigg)^{p'}\lambda_2(Q)\\
    &\leq C_1C_2 \sum_{Q\supset Q_N}\inf_{x\in Q}\mathcal{M}_{\lambda_2}^{p'}(|g|\lambda_2^{-1})(x)\lambda_2(E(Q))\nonumber\\
    &\leq C_1C_2 \sum_{Q\supset Q_N}\int_{E(Q)}\mathcal{M}_{\lambda_2}^{p'}(|g|\lambda_2^{-1})(x)\lambda_2(x)d\mu(x)\nonumber\\
    &\leq C_1C_2 \int_{\mathbb{R}^n}\mathcal{M}_{\lambda_2}^{p'}(|g|\lambda_2^{-1})(x)\lambda_2(x)d\mu(x)\nonumber\\
    &\leq C_1C_2 \|g\lambda_2^{-1}\|_{L^{p'}_{\lambda_2}(X)}^{p'}\nonumber\\
     &= C_1C_2 \|g\|_{L^{p'}_{\lambda'_2}(X)}^{p'},\nonumber
\end{align}
where we use the facts that $\{Q_i\}_i$ are sparse family and thus there is a constant $C_1$ such that $$\sum_{i=1}^{\tau_Q} \log\bigg({\mu(Q)\over \mu(Q_i)}\bigg)\bigg({\mu(Q_i)\over \mu(Q)}\bigg)^{p'\sigma'} \leq C_1$$
and by Lemma~\ref{lemcomparison} that there is a constant $C_2$ such that
$${\lambda_2(Q)\over \lambda_2(E(Q))} \leq C_2,$$
where $E(Q)$ is the set in the cube $Q$ in some $\eta$ Sparse collection of cubes $\mathcal{S}$ such that $\mu(E_Q)\geq \eta \mu(Q)$.

We also have the following estimate for $A$
\begin{align}\label{controla}
A&\leq \sum_{Q\supset Q_N}\sum_{i=1}^{\tau_Q}\sum_{j=1}^{i-1}{\lambda_1{(Q_j)}\lambda'_2{(Q_j)}^{p-1} \over \mu(Q_j)^p}\|f\|_{L^{p}_{\lambda_1}(Q_{i,k})}^{p}\lambda'_1(Q_{i,k})^{p-1}{1\over \mu(Q)^p}\bigg({\mu(Q)\over \mu(Q_{i,k})}\bigg)^{p\sigma'}\lambda_2(Q)\\
&\leq \sum_{Q\supset Q_N}\sum_{i=1}^{\tau_Q}\sum_{j=1}^{i-1}{\lambda_1{(Q_i)}\lambda'_1{(Q_i)}^{p-1} \over \mu(Q_i)^p}{\lambda_1(Q_j)\over \lambda_1(Q_i)}\bigg({\mu(Q_i)\over \mu(Q_j)}\bigg)^{p}{\lambda_2{(Q_j)}\lambda'_2{(Q_j)}^{p-1} \over \mu(Q_j)^p}\nonumber\\
&\qquad\times{\lambda_2(Q)\over \lambda_2(Q_j)}\bigg({\mu(Q_j)\over \mu(Q)}\bigg)^{p}\bigg({\mu(Q)\over \mu(Q_{i,k})}\bigg)^{p\sigma'}\|f\|_{L^{p}_{\lambda_1}(Q_{i,k})}^{p}\nonumber\\
&\leq \sum_{Q\supset Q_N}\sum_{i=1}^{\tau_Q}\sum_{j=1}^{i-1}[\lambda_1]_{A_p}^{2}[\lambda_2]_{A_p}^{2}\nonumber\\
&\qquad\times\bigg({\mu(Q_j)\over\mu(Q_i)}\bigg)^{p-\sigma}\bigg({\mu(Q_i)\over\mu(Q_j)}\bigg)^{p}\bigg({\mu(Q)\over\mu(Q_j)}\bigg)^{p-\sigma}\bigg({\mu(Q_j)\over\mu(Q)}\bigg)^{p}\bigg({\mu(Q)\over \mu(Q_{i,k})}\bigg)^{p\sigma'}\|f\|_{L^{p}_{\lambda_1}(Q_{i,k})}^{p}\nonumber\\
&\leq [\lambda_1]_{A_p}^{2}[\lambda_2]_{A_p}^{2}\sum_{i=1}^{\infty}\sum_{Q\supset Q_i}\log\bigg({\mu(Q)\over\mu(Q_i)}\bigg)\bigg({\mu(Q_i)\over \mu(Q)}\bigg)^{\sigma-p\sigma'}\|f\|_{L^{p}_{\lambda_1}(Q_{i,k})}^{p}\nonumber\\
&\leq C[\lambda_1]_{A_p}^{2}[\lambda_2]_{A_p}^{2}\sum_{i=1}^{\infty}\|f\|_{L^{p}_{\lambda_1}(Q_{i,k})}^{p}\nonumber\\
&\leq C[\lambda_1]_{A_p}^{2}[\lambda_2]_{A_p}^{2}\|f\|_{L^{p}_{\lambda_1}(X)}^{p}.\nonumber
\end{align}
Then \eqref{termc} follows from \eqref{controlb}, \eqref{controla} and \eqref{controlforiii}.

\subsection{Proof of Theorem \ref{thm main3} (i): $\forall \mathcal S, \mathcal T_{\mathcal S,b} {\rm\ compact} \Rightarrow [b,\mathcal T] {\rm\ compact}$}

\begin{thm}\label{thm main1}
Let $p\in(1,\infty)$ and  $\lambda_1,\lambda_2\in A_p$, $\nu:= \lambda_1^{1\over p}\lambda_2^{-{1\over p}}$. Suppose $b\in L^1_{\rm loc}(X)$,
and that $T$ is a Calder\'on--Zygmund operator.  Then the commutator $[b, T]$ is compact from $L^{p}_{\lambda_1}(X)$ to $L^p_{\lambda_2}(X)$ if $b\in {\rm VMO}_{\nu}(X)$.
\end{thm}
\begin{proof}
The main idea is similar to that in the proof of  Theorem~\ref{thm main3}. We decompose $[b,\mathcal T] f(x) = T_{\epsilon, N_{\epsilon}}f(x) + T_{\epsilon} f(x)$, for all
$\epsilon>0$.

We will show that for all $\epsilon >0 $, there exists $N_{\epsilon}$ such that $ T_{\epsilon, N_{\epsilon}}f(x)$ is a compact operator and we will show that the norm of $T_{\epsilon} f(x)$ is at most $\epsilon$, i.e.,
\begin{equation*}
\|T_{\epsilon} f(x)\|_{{L^p_{\lambda_2}}(X)} \leq \epsilon\|f\|_{{L^p_{\lambda_1}}(X)}.
\end{equation*}

Again, as in the proof of  Theorem~\ref{thm main3}, for $\epsilon>0$, from Definitions~\ref{lemvmo}, \ref{lemvmo2 prime}, \ref{lemvmo2} and Proposition~\ref{prop VMO}, we choose number $N>0$, $\delta>0$ and cube $Q_N$ side length $N$ such that
$$
 \frac{1}{\nu(Q)}\int_{Q}|b(x)-b_{Q}|d\mu(x)<\epsilon,\quad\quad
\left(\frac{1}{\lambda_1(Q)}\int_{Q}|b(x)-b_{Q}|^p\lambda_2(x)d\mu(x)\right)^{\frac{1}{p}}<\epsilon,
$$
and
$$
 \left(\frac{1}{\lambda'_2(Q)}\int_{Q}|b(x)-b_{Q}|^{p'}\lambda'_1(x)d\mu(x)\right)^{\frac{1}{p'}}<\epsilon
$$
when $l(Q)>N$, $l(Q)<\delta$ and $Q\cap Q_N=\emptyset$.

Decompose $3Q_N=\cup_j P_j$ where $P_j$ are dyadic cubes and $l(P_j)=\delta/4$. Then
\begin{align*}
[b,\mathcal T]f(x)\chi_{3Q_N}(x)&=\sum_j[b,\mathcal T]f(x)\chi_{P_j}(x)\\
&=\sum_j[b,\mathcal T](f\chi_{X\backslash 3P_j})(x)\chi_{P_j}(x)+\sum_j[b,\mathcal T](f\chi_{3P_j})(x)\chi_{P_j}(x).
\end{align*}
Recall $T_1f(x),T_2f(x)$ and $T_3f(x)$ in \eqref{splitone} in the proof of Theorem~\ref{thm main3}.
Repeat the argument in the proof of Theorem 1.1 of \cite{LOR}. We have
\begin{align*}
|\sum_j[b,\mathcal T](f\chi_{3P_j})(x)\chi_{P_j}(x)|&\leq C\sum_{\substack {Q \subset Q_N \\\ l(Q) <\delta}}\bigg(\frac{1}{\mu(Q)}\int_{Q}|b(y)-b_{Q}||f(y)|d\mu(y)\bigg)\chi_{Q}(x)\\
&\quad\quad+C\sum_{\substack {Q \subset Q_N \\\ l(Q) <\delta}}|b(x)-b_{Q}|\bigg(\frac{1}{\mu(Q)}\int_{Q}|f(y)|d\mu(y)\bigg)\chi_{Q}(x)\\
&=T_3f(x)+T_3^*f(x),
\end{align*}
and
\begin{align*}
|[b,\mathcal T]f(x)\chi_{X\backslash 3Q_N}(x)|&\leq C\sum_{Q\supset Q_N}\bigg(\frac{1}{\mu(Q)}\int_{Q}|b(y)-b_{Q}||f(y)|d\mu(y)\bigg)\chi_{Q}(x)\\
&\quad\quad+C\sum_{Q\supset Q_N}|b(x)-b_{Q}|\bigg(\frac{1}{\mu(Q)}\int_{Q}|f(y)|d\mu(y)\bigg)\chi_{Q}(x)\\
&\quad\quad+ C\sum_{Q\cap Q_N=\emptyset}\bigg(\frac{1}{\mu(Q)}\int_{Q}|b(y)-b_{Q}||f(y)|d\mu(y)\bigg)\chi_{Q}(x)\\
&\quad\quad+C\sum_{Q\cap Q_N=\emptyset}|b(x)-b_{Q}|\bigg(\frac{1}{\mu(Q)}\int_{Q}|f(y)|d\mu(y)\bigg)\chi_{Q}(x)\\
&=T_1f(x)+T_1^*f(x)+T_2f(x)+T_2^*f(x),
\end{align*}
where all $Q$ are in sparse family $\mathcal S$ and we omit $Q\in \mathcal S$ in each of the summation for brevity.

Denote by $T_{\epsilon} f(x):=\sum_j[b,\mathcal T](f\chi_{3P_j})(x)\chi_{P_j}(x)+[b,\mathcal T]f(x)\chi_{X\backslash 3Q_N}(x)$. It follows from the dual argument and the estimates of $T_1f(x),T_2f(x)$ and $T_3f(x)$ in the proof of Theorem~\ref{thm main3} that
\begin{equation*}
\|T_{\epsilon} f(x)\|_{{L^p_{\lambda_2}}(X)} \leq \epsilon\|f\|_{{L^p_{\lambda_1}}(X)}.
\end{equation*}

Then what remains is to prove that $ T_{\epsilon, N_{\epsilon}}f(x):=\sum_j[b,\mathcal T](f\chi_{X\backslash 3P_j})(x)\chi_{P_j}(x)$ is a compact operator. For fixed $\epsilon$, there are finite $P_j$ and thus it suffices to prove $[b,\mathcal T](f\chi_{X\backslash 3P})(x)\chi_{P}(x)$ is a compact operator for some cube $P$.
It follows from the Riesz--Kolmogorov theorem on doubling measure spaces \cite[Theorem 1]{GM} (see also \cite[Lemma 4.3]{Pxq}) that it suffices to prove uniformly for all $f$ with $\|f\|_{L^p_{\lambda_1}}\leq 1$:

(i) $\|[b,\mathcal T](f\chi_{X\backslash 3P})(x)\chi_{P}(x)\|_{L^p_{\lambda_2}}$ is bounded;

(ii)
$$
\lim_{R\to \infty} \int_{X\backslash B(x_0,R)}  |[b,\mathcal T](f\chi_{X\backslash 3P})(x)\chi_{P}(x)|^p \lambda_2(x)d\mu(x)=0;
$$

(iii)
$$
\lim_{r\to 0} \int_{X}  |[b,\mathcal T](f\chi_{X\backslash 3P})(x)\chi_{P}(x)-\left([b,\mathcal T](f\chi_{X\backslash 3P})\chi_{P}\right)_{B(x,r)}|^p \lambda_2(x)d\mu(x)=0.
$$

We note that (i) follows from the boundedness of $[b,\mathcal T]$. (ii) follows from the fact that $P$ is fixed and $P\subset B(x_0,R)$ when $R$ is large enough. For (iii), it follows from the absolute continuous property of the integral that it suffices to prove
\begin{align}\label{singularintegral(iii)}
\lim_{r\to 0} \int_{P}  |[b,\mathcal T](f\chi_{X\backslash 3P})(x)-\left([b,\mathcal T](f\chi_{X\backslash 3P})\right)_{B(x,r)}|^p \lambda_2(x)d\mu(x)=0.
\end{align}
The integral tends to $0$ is natural from  dominated convergence theorem and Lebesgue's differential theorem. What we should prove is that the convergence is uniform for $f$. We decompose $P$ to the union of dyadic cubes $B_j$ such that
all radius of $B_j$ are $r$ and $B(x,r)\subset 2B_j$ if $x\in B_j$. Now the integral in the left hand of \eqref{singularintegral(iii)} can be written as
\begin{align}\label{singularintegral(iii)2}
&\sum_j\int_{B_j}  |[b,\mathcal T](f\chi_{X\backslash 3P})(x)-\left([b,\mathcal T](f\chi_{X\backslash 3P})\right)_{B(x,r)}|^p \lambda_2(x)d\mu(x)\nonumber\\
&\leq \sum_j\int_{B_j}  |[b,\mathcal T](f\chi_{X\backslash 3P})(x)-\left([b,\mathcal T](f\chi_{X\backslash 3P})\right)_{B_j}|^p \lambda_2(x)d\mu(x)\nonumber\\
&\quad\quad+\sum_j\int_{B_j}  |\left([b,\mathcal T](f\chi_{X\backslash 3P})\right)_{B(x,r)}-\left([b,\mathcal T](f\chi_{X\backslash 3P})\right)_{B_j}|^p \lambda_2(x)d\mu(x)\nonumber\\
&=:I+II.
\end{align}
For term $I$, it follows from \cite[Corollary 2.2]{B} that
\begin{align*}
I&\leq \sum_j\int_{B_j}  |\left([b,\mathcal T](f\chi_{X\backslash 3P})\right)^{\#,B_j}(x)|^p \lambda_2(x)d\mu(x),
\end{align*}
where $(g)^{\#,Q}(x)$ is the sharp function restricted to $Q$ defined by
$$
(g)^{\#,Q}(x):=\sup\{\frac{1}{\mu(B)}\int_B |g-g_B|d\mu: x\in B\subset Q\}.
$$
Now we will somehow repeat the argument in the proof of \cite[Lemma 4.5]{B} to show the estimate of  $\left([b,\mathcal T](f\chi_{X\backslash 3P})\right)^{\#,B_j}(x)$. We skip the details and only point out the places where are different from that in the proof of \cite[Lemma 4.5]{B}. Note that for $x\in B_j$
\begin{align*}
&\frac{1}{\mu(B)}\int_B |[b,\mathcal T](f\chi_{X\backslash 3P})(y)-([b,\mathcal T](f\chi_{X\backslash 3P}))_B|d\mu(y)\\
&\leq \frac{2}{\mu(B)}\int_B |[b-b_B,\mathcal T](f\chi_{X\backslash 3P})(y)-\mathcal T((b-b_B)f\chi_{X\backslash 3P})(x)|d\mu(y)\\
&\leq \frac{2}{\mu(B)}\int_B |b(y)-b_B||\mathcal T(f\chi_{X\backslash 3P})(y)|d\mu(y)\\
&\quad\quad+\frac{2}{\mu(B)}\int_B|\mathcal T((b-b_B)f\chi_{X\backslash 3P})(y)-\mathcal T((b-b_B)f\chi_{X\backslash 3P})(x)|d\mu(y)\\
&=:I_1+I_2.
\end{align*}
For term $I_2$,
\begin{align*}
I_2
&\leq \frac{2}{\mu(B)}\int_{B}|\mathcal T((b-b_{B})f\chi_{X\backslash 3P})(x)-\mathcal T((b-b_{B})f\chi_{X\backslash 3P})(y)|d\mu(y)\\
&\leq \frac{2}{\mu(B)}\int_{B}\int_{X\backslash 3P}|K_{\mathcal T}(x,z)-K_{\mathcal T}(y,z)(b(z)-b_{B})f(z)|d\mu(z)d\mu(y)\\
&\leq C\frac{1}{\mu(B)}\int_{B}\int_{X\backslash 3P}{\bigg(\frac{d(x,y)}{d(x,z)}\bigg)^\sigma {1\over \mu(B(x,d(x,z)))}}|(b(z)-b_{B})f(z)|d\mu(z)d\mu(y)\\
&\leq Cr^\sigma\int_{X\backslash 3P}{\bigg(\frac{1}{d(x,z)}\bigg)^\sigma {1\over \mu(B(x,d(x,z)))}}|(b(z)-b_{B})f(z)|d\mu(z).
\end{align*}
It follows from the argument in the proof of \cite[Lemma 4.5]{B} that
$$
\int_{X\backslash 3P}{\bigg(\frac{1}{d(x,z)}\bigg)^\sigma {1\over \mu(B(x,d(x,z)))}}|(b(z)-b_{B})f(z)|d\mu(z)
\leq C K^*_{\lambda_1,\lambda_2}(b,f)(x),
$$
where $K^*_{\lambda_1,\lambda_2}(b,f)(x)$ is some kind of maximal function of $f$ such that
$$
\|K^*_{\lambda_1,\lambda_2}(b,f)(x)\|_{L_{\lambda_2}^p}\leq C\|b\|_{BMO_\nu}\|f\|_{L_{\lambda_1}^p}.
$$
The above inequalities give us
\begin{align*}
\sum_j\int_{B_j}  |\sup_{x\in B}I_2|^p \lambda_2(x)d\mu(x)&\leq Cr\sum_j\int_{B_j} |K^*_{\lambda_1,\lambda_2}(b,f)(x)|^p \lambda_2(x)d\mu(x)\\
&\leq Cr\int_{P} |K^*_{\lambda_1,\lambda_2}(b,f)(x)|^p \lambda_2(x)d\mu(x)\\
&\leq Cr\|b\|_{BMO_\nu}^p\|f\|^p_{L_{\lambda_1}^p}.
\end{align*}

For term $I_1$, it follows from \cite[Corollary 2.2]{B} that
\begin{align*}
I_1\leq K^*(b,\mathcal Tf,r)_{\lambda_1,\lambda_2}(x),
\end{align*}
where $K^*_{\lambda_1,\lambda_2}(b,f,r)(x)$ is some kind of maximal function of $f$ and defined for the supremum of the cubes containing $x$ and whose sidelength  are smaller than $r$ such that
$$
\|K^*_{\lambda_1,\lambda_2}(b,f,r)(x)\|_{L_{\lambda_2}^p}\leq C\|b\|_{BMO_\nu(r)}\|f\|_{L_{\lambda_1}^p}.
$$
The above inequalities give us
\begin{align*}
\sum_j\int_{B_j}  |\sup_{x\in B\subset B_j}I_1|^p \lambda_2(x)d\mu(x)&\leq C\sum_j\int_{B_j} |K^*_{\lambda_1,\lambda_2}(b,\mathcal Tf,r)(x)|^p \lambda_2(x)d\mu(x)\\
&\leq C\int_{P} |K^*_{\lambda_1,\lambda_2}(b,\mathcal Tf,r)(x)|^p \lambda_2(x)d\mu(x)\\
&\leq C\|b\|_{BMO_\nu(r)}^p\|f\|^p_{L_{\lambda_1}^p}.
\end{align*}
Finally note that $b\in VMO_\nu$ implies $\|b\|_{BMO_\nu(r)}\to 0$ when $r\to 0$ regardless of $f$. Thus we prove that
$I\to 0$ when $r\to 0$ uniformly for $f$. The proof of $II\to 0$ is similar and simpler. We skip it for brief.
Then \eqref{singularintegral(iii)} holds  and thus  $T_{\epsilon, N_{\epsilon}}$ is a compact operator. We complete the proof of Theorem~\ref{thm main1}.
\end{proof}

\subsection{Proof of Theorem \ref{thm main3} (ii): $[b,\mathcal T] {\rm\ compact}+{\rm non\, degenerate}\Rightarrow b\in {\rm VMO}_{\nu}(X) $}

Now consider the non-degenaracy condition on the kernal $K(x,y)$ of the operator $T$ below, which allows us to reverse argument for the compactness.
{\it There exist positive constant $c_0$ and $\overline C$ such that for every $x\in X$ and $r>0$, there exists
\,$y\in B(x, \overline C r)\backslash B(x,r)$ satisfying
\begin{equation}\label{e-assump cz ker low bdd weak}
|K(x, y)|\geq \frac1{c_0\mu(B(x,r))}.
\end{equation}
}
 To be more precise,
\begin{thm}\label{thm main1 prime}
Suppose $p\in(1,\infty)$ and  $\lambda_1,\lambda_2\in A_p$, $\nu:= \lambda_1^{1\over p}\lambda_2^{-{1\over p}}$. Suppose  $b\in L^1_{\rm loc}(X)$, $\mathcal T$ satisfies the non-degenerate  condition~\eqref{e-assump cz ker low bdd weak} above,  and $[b, \mathcal T]$ is compact from $L^{p}_{\lambda_1}(X)$ to $L^p_{\lambda_2}(X)$. Then we deduce that $b\in {\rm VMO}_{\nu}(X)$.
\end{thm}

We note that the proof of the above theorem follows from \cite{LaLi}. For the details we skip it here.

The proof of Theorem \ref{thm main3} is complete.

\section{Compact Bilinear sparse operator: Proof of Theorem \ref{thm bilinear}}

 We begin to prove the Theorem \ref{thm bilinear}.
We first prove $\mathcal T_{\mathcal S,b}^{B,*}$ is compact.  The compactness of $\mathcal T_{\mathcal S,b}^{B}$ can be archived by the dual argument for $p>1$ and a simple derivation for $1/2<p\leq 1$, which we put at the end of this section.

For $1/2<p\leq 1$, note that
\begin{align*}
\|\mathcal T_{\mathcal S,b}^{B,*}(f,g)\|_{L^p({\widehat w})}^p
& =\int_X \bigg(\sum_{Q\in \mathcal S} \frac{1}{\mu(Q)}|(b(y)-b_Q)f(y)|d\mu(y) g_Q \chi_Q(x)\bigg)^p {\widehat w}(x)d\mu(x)\\
&\leq \sum_{Q\in \mathcal S} \left(\frac{1}{\mu(Q)}|(b(y)-b_Q)f(y)|d\mu(y)\right)^p  g^p_Q \int_Q {\widehat w}(x)d\mu(x)\\
&\leq \sum_{Q\in \mathcal S} \left(\frac{1}{\mu(Q)}|(b(y)-b_Q)f(y)|d\mu(y)\right)^p \lambda_2^{p\over p_1}(Q) g_Q^p w^{p\over p_2}(Q)\\
&\leq \bigg(\sum_{Q\in \mathcal S} \left(\frac{1}{\mu(Q)}|(b(y)-b_Q)f(y)|d\mu(y)\right)^{p_1} \lambda_2(Q)\bigg)^{p\over p_1}\bigg(\sum_{Q\in \mathcal S} g_Q^{p_2} w(Q)\bigg)^{p\over p_2}\\
&\leq \Bigg( \int_X \bigg(\sum_{Q\in \mathcal S}\frac{1}{\mu(Q)}|(b(y)-b_Q)f(y)|d\mu(y)\chi_Q(x)\bigg)^{p_1} \lambda_2(x)d\mu(x)\Bigg)^{p\over p_1}\\
&\quad\quad\times \left(\sum_{Q\in \mathcal S} \int_{E(Q)} g_Q^{p_2} w(x)d\mu(x) \right)^{p\over p_2} \\
&\leq \|\mathcal T_{\mathcal S,b}^{*}f\|^p_{L^{p_1}_{\lambda_1}} \|\mathcal M g\|_{L^{p_2}_{w}}^p,
\end{align*}
which implies that the compactness of $\mathcal T_{\mathcal S,b}^{B,*}$ for $1/2<p\leq 1$ can be obtained from the compactness of the linear case $\mathcal T_{\mathcal S,b}^{*}$.

So we assume $p>1$ for what the following.
Our approach is similar to the linear case.
Recall that $\mathcal T_{\mathcal S,b}^{B,*}(f,g)(x) $ is given by the following equation

\begin{equation}\label{estimatesi7}
\mathcal T_{\mathcal S,b}^{B,*}(f,g)(x) =\sum_{Q\in \mathcal S} \bigg( {1\over\mu(Q)} \int_Q |b(y)-b_Q||f(y)|d\mu(y)\bigg) g_Q\chi_Q(x).
\end{equation}

%

For $\epsilon>0$, from Definitions~\ref{lemvmo}, \ref{lemvmo2 prime}, \ref{lemvmo2} and Proposition~\ref{prop VMO}, we choose number $N>0$, $\delta>0$ and cube $Q_N$ side length $N$ such that these conditions hold
\begin{gather*}
 \frac{1}{\nu(Q)}\int_{Q}|b(x)-b_{Q}|d\mu(x)<\epsilon,\qquad
 \left(\frac{1}{\lambda_1(Q)}\int_{Q}|b(x)-b_{Q}|^p\lambda_2(x)d\mu(x)\right)^{\frac{1}{p}}<\epsilon,
\\
 \left(\frac{1}{\lambda'_2(Q)}\int_{Q}|b(x)-b_{Q}|^{p'}\lambda'_1(x)d\mu(x)\right)^{\frac{1}{p'}}<\epsilon
\end{gather*}
when $l(Q)>N$, or $l(Q)<\delta$,  or $Q\cap Q_N=\emptyset$.

 We now write $\mathcal T_{\mathcal S,b}^{B,*}(f,g)(x)$ as follows.
 \begin{align}\label{splitone7}
\mathcal T_{\mathcal S,b}^{B,*}(f,g)(x) &= \sum_{Q \supset Q_N }\bigg(\frac{1}{\mu(Q)}\int_{Q}|b(y)-b_{Q}||f(y)|d\mu(y)\bigg)g_Q\chi_{Q}(x)\\
&\quad+ \sum_{Q \cap  Q_N = \emptyset}\bigg(\frac{1}{\mu(Q)}\int_{Q}|b(y)-b_{Q}||f(y)|d\mu(y)\bigg)g_Q\chi_{Q}(x)
\nonumber\\
&\quad + \sum_{\substack {Q \subset Q_N \\\ l(Q) <\delta}}\bigg(\frac{1}{\mu(Q)}\int_{Q}|b(y)-b_{Q}||f(y)|d\mu(y)\bigg)g_Q\chi_{Q}(x)
\nonumber\\
&\quad+ \sum_{\substack{Q \subset Q_N\\\ l(Q)>\delta}}\bigg(\frac{1}{\mu(Q)}\int_{Q}|b(y)-b_{Q}||f(y)|d\mu(y)\bigg)g_Q\chi_{Q}(x)
\nonumber\\
& =: T_1(f,g)(x) + T_2(f,g)(x) + T_3(f,g)(x) + T_4(f,g)(x). \nonumber
\end{align}
For $T_4(f,g)(x)$, note that
 there are only finitely many cubes contained in $Q_N$ such that $\delta < l(Q) < N$, which gives that $T_4(f,g)(x)$ has finite range,  and thus  it is a compact operator.


We will now show that the norm of  $T_1(f,g)(x), T_2(f,g)(x), T_3(f,g)(x)$ is at most $\epsilon$.
Let us start with the estimate for the norm of $T_3(f,g)(x)$; i.e.,
\begin{equation}\label{t37}
\|T_3(f,g)(x)\|_{L^p_{\widehat w}(X)}\leq \epsilon\|f\|_{L^{p_1}_{\lambda_1}(X)}\|g\|_{L^{p_2}_{w}(X)}.
\end{equation}

Recall that $\nu = \lambda_1^{1/p_1}\lambda_2^{-1/p_1}$, thus for some $\epsilon >0$, by  Lemma $\ref{lem b-bQ}$, we have the following
\begin{align}\label{t3proof7}
& T_3(f,g)(x) \leq \sum_{\substack {Q \subset Q_N \\\ l(Q) <\delta}}\sum_{\substack{R\in \tilde{\mathcal S}\\\ R\subset Q}}\bigg(\frac{1}{\mu(R)}\int_R|b(z)-b_R|d\mu(z)\frac{1}{\mu(Q)}\int_{R}|f(y)|d\mu(y)\bigg)g_Q\chi_Q(x)\\
& = \sum_{\substack {Q \subset Q_N \\\ l(Q) <\delta}}\sum_{\substack{R\in \tilde{\mathcal S}\\\ R\subset Q}}{1\over\nu(R)} \int_R|b(z)-b_R|d\mu(z)\bigg(\frac{1}{\mu(R)}\int_R |f(y)|\nu(R)d\mu(y)\bigg)\frac{1}{\mu(Q)}g_Q\chi_Q(x)\nonumber\\
& \leq \epsilon \sum_{\substack {Q \subset Q_N \\\ l(Q) <\delta}}\bigg(\sum_{\substack{R\in \tilde{\mathcal S}\\\ R\subset Q}}|f|_{R}\nu(R)\bigg)\frac{1}{\mu(Q)}g_Q\chi_Q(x)\nonumber\\
& \leq \epsilon \sum_{\substack {Q \subset Q_N \\\ l(Q) <\delta}}\frac{1}{\mu(Q)}\bigg(\int_Q \mathcal A_{\tilde{\mathcal S}}(|f|)(y)\nu(y)dy \bigg)g_Q\chi_Q(x)\nonumber\\
& \leq \epsilon\mathcal{A}^B_{\mathcal{S}}\bigg(\mathcal A_{\tilde{\mathcal S}}(|f|)\nu,g\bigg)(x).\nonumber
\end{align}
Here we have used from Definition~\ref{lemvmo} that for $b \in {\rm VMO}_\nu(X)$ such that when $l(Q)<\delta$, we have $\frac{1}{\nu(Q)}\int_Q |b(y)-b_Q|d\mu(y) < \epsilon$. Here $\mathcal{A}^B_{\mathcal{S}}$ is the classical bilinear sparse operator.
\vspace{0.2 cm}

Now observe that from classical weighted boundedness of sparse operator and bilinear sparse operator,  we have that
\begin{align}\label{finalt3c7}
&\|T_3(f,g)(x)\|_{L^p_{\widehat w}(X)} \leq \epsilon \|\mathcal{A}^B_{\mathcal{S}}(\mathcal A_{\tilde{\mathcal S}}(|f|)\nu,g)(x)\|_{L^p_{\widehat w}(X)}\\
&\leq \epsilon [\overrightarrow{w}]_{A_{\overrightarrow{p}}}^{\alpha}\|\mathcal A_{\tilde{\mathcal S}}(|f|)\nu\|_{L^{p_1}_{\lambda_{2}}(X)}\|g\|_{L^{p_2}_{w}(X)}\nonumber\\
&= \epsilon[\overrightarrow{w}]_{A_{\overrightarrow{p}}}^{\alpha}\|\mathcal A_{\tilde{\mathcal S}}(|f|)\|_{L^{p_1}_{\lambda_{1}}(X)}\|g\|_{L^{p_2}_{w}(X)}\nonumber\\
& \leq \epsilon[\overrightarrow{w}]_{A_{\overrightarrow{p}}}^{\alpha}([\lambda_1]_{A_{p_1}})^{\max\{1,{1\over p_1-1}\}}\|f\|_{L^{p_1}_{\lambda_1}(X)}\|g\|_{L^{p_2}_{w}(X)}.\nonumber
\end{align}
Then this finishes the proof for the control of the norm of $T_3(f,g)(x)$.


For $T_2(f,g)$, recall that
\begin{equation}
  T_2(f,g)(x) = \sum_{Q \cap Q_N = \emptyset}\bigg(\frac{1}{\mu(Q)}\int_{Q}|b(y)-b_{Q}||f(y)|d\mu(y)\bigg)g_Q\chi_{Q}(x).
\end{equation}
Following similar approach in the estimate for $T_3(f,g)$, we write $|b-b_Q|$ as in \eqref{estimateForb-bQ}. Since $Q\cap Q_N = \emptyset$ and $R \subset Q$, we have for all $R \in \mathcal{\tilde{S}}$ in \eqref{estimateForb-bQ}, $R\cap Q_N = \emptyset$. Now according to Definition~\ref{lemvmo}, we have that
${1\over\nu(R)} \int_B|b(x)-b_R|d\mu(x) \leq \epsilon$.
Then following the same arguments as we did for the control of the norm $T_3(f,g)(x)$, we obtain similar estimate for the norm of $T_2(f,g)(x)$, i.e.,
\begin{equation}\label{T2c7}
 \|T_2(f,g)\|_{L^p_{\widehat w}(X)}\leq \epsilon\|f\|_{L^{p_1}_{\lambda_1}(X)}\|g\|_{L^{p_2}_{w}(X)}.
\end{equation}

Now let us show the control for the norm of $T_1(f,g)(x)$, recall that
\begin{equation}
T_1(f,g)(x) = \sum_{Q \supset Q_N}\bigg(\frac{1}{\mu(Q)}\int_{Q}|b(y)-b_{Q}||f(y)|d\mu(y)\bigg)g_Q\chi_{Q}(x).
\end{equation}

We will start with a collection of sparse dyadic cubes $Q =  Q_1 \supset Q_2 \supset Q_3 \supset Q_4\cdots \supset Q_{\tau_{Q}} \supset Q_{\tau_{Q}+1}= Q_N$,
where $Q_i$ is the ``parent" of $Q_{i+1}$, $i= 1,2, \ldots, \tau_{Q}$. For the sake of the sparse property, if the parent of $Q_{i+1}$ has only one child $Q_{i+1}$, we should still denote by $Q_{i+1}$ the parent of $Q_{i+1}$ since they are the same dyadic cube indeed. Then repeat the process until we find $Q_{i}$ such that $Q_{i}$ has at least two children and $Q_{i+1}$ is one of them.  For each $Q_i$, $i= 1,2, \ldots, \tau_{Q}$, we denote all its dyadic children except $Q_{i+1}$ by $Q_{i,k}$, $k = 1,2,....,M_{Q_i}$ where $M_{Q_i}+1$ is the number of the children of $Q_i$ and less than uniform constant $M$ in \eqref{eq:children}. Hence for all $i= 0,1,2, \ldots, \tau_{Q}$ and $k = 1,2,....,M_{Q_i}$, $Q_{i,k}\cap Q_N = \emptyset$. Note that $Q_{i+1}$ and $Q_{i,k}$ have equivalent measures since it follows from \eqref{eq:contain} that
$$
\mu(Q_{i+1})\leq \mu(B(Q_i))\leq C\bigg(1+\frac{d(x_{Q_i},x_{Q_{i,k}})}{C_1\delta^{k_i}}\bigg)^n \bigg(\frac{C_1}{c_1\delta}\bigg)^n \mu(B(x_{Q_{i,k}}),c_1\delta^{{k_i}+1})\leq C2^n \bigg(\frac{C_1}{c_1\delta}\bigg)^n \mu(Q_{i,k}).
$$
And thus there exists uniform constant $0<\widetilde \eta <1$ such that $\mu(Q_{i+1})\leq \widetilde \eta \mu(Q_i)$, which ensure the sparse property of the collection of $\{Q_i\}_{i}$.
Then
\begin{align}
T_1(f,g)(x) &\leq \sum_{Q\supset Q_N}\bigg(\sum_{i=1}^{\tau_Q}\sum_{k=1}^{M_{Q_i}}{1\over \mu(Q)}\int_{Q_{i,k}}|b(y)-b_{Q_{i,k}}||f(y)|d\mu(y)\bigg)g_Q\chi_{Q}(x)\\
&\quad+\sum_{Q\supset Q_N}\bigg({1\over \mu(Q)}\int_{Q_{N}}|b(y)-b_{Q_{N}}||f(y)|d\mu(y)\bigg)g_Q\chi_{Q}(x)\nonumber\\
&\quad+\sum_{Q\supset Q_N}\bigg(\sum_{i=1}^{\tau_Q}\sum_{k=1}^{M_{Q_i}}|b_{Q_{i,k}}- b_Q|{1\over \mu(Q)}\int_{Q_{i,k}}|f(y)|d\mu(y)\bigg)g_Q\chi_{Q}(x)\nonumber\\
&\quad+\sum_{Q\supset Q_N}|b_{Q_N}-b_{Q}|{1\over \mu(Q)}\int_{Q_{N}}|f(y)|d\mu(y)g_Q\chi_{Q}(x)\nonumber\\
&=: I+II+III+IV.\nonumber
\end{align}

Our goal is to control each of these terms in the sum above to obtain the control for the norm of $T_1f(x)$, i.e.,

\begin{equation}\label{t1control7}
 \|T_1(f,g)\|_{L^p_{\widehat w}(X)}\leq \epsilon\|f\|_{L^{p_1}_{\lambda_1}(X)}\|g\|_{L^{p_2}_{w}(X)}.
\end{equation}

Let us now begin with the estimate of the norm of $II$. Recall that $\lambda'_{1} = \lambda_1^{-1\over {p_1}-1}$, $\lambda'_{2} = \lambda_2^{-1\over {p_1}-1}$, $w'=w^{-1\over p_2-1}$ and ${\widehat w}'={\widehat w}^{-1\over p-1}$.
\begin{align}
&\|II\|_{L^p_{\widehat w}(X)} \\
&= \bigg\|\sum_{Q\supset Q_N}\bigg({1\over \mu(Q)}\int_{Q_{N}}|b(y)-b_{Q_{N}}||f(y)|d\mu(y)\bigg)g_Q\chi_{Q}(x)\bigg\|_{L^p_{\widehat w}(X)}\nonumber\\
&= \sup_{\|h\|_{L^{p'}_{{\widehat w}'}}\leq 1 }|\langle \sum_{Q\supset Q_N}\bigg({1\over \mu(Q)}\int_{Q_{N}}|b(y)-b_{Q_{N}}||f(y)|d\mu(y)\bigg)g_Q\chi_{Q}(x), h(x)\rangle|\nonumber\\
&\leq \sup_{\|h\|_{L^{p'}_{{\widehat w}'}}\leq 1 }\sum_{Q\supset Q_N}{1\over \mu(Q)}\int_{Q_{N}}|b(y)-b_{Q_{N}}||f(y)|d\mu(y)g_Q\int_{Q}|h(x)|d\mu(x)\nonumber\\
&\leq \sup_{\|h\|_{L^{p'}_{{\widehat w}'}}\leq 1 }\sum_{Q\supset Q_N}{1\over \mu(Q)}\bigg(\int_{Q_{N}}|b(y)-b_{Q_{N}}|^{p_1'}\lambda'_1(y)d\mu(y)\bigg)^{1\over p_1'}\bigg(\int_{Q_{N}}|f(x)|^{p_1}\lambda_1(x)d\mu(x)\bigg)^{1\over p_1}\nonumber\\
&\qquad\times \frac{1}{\mu(Q)}\bigg(\int_{Q}|g(x)|^{p_2}w(x)d\mu(x)\bigg)^{1\over p_2}
{w'}(Q)^{1\over p_2'}\bigg(\int_{Q}|h(x)|^{p'}{\widehat w}'(x)d\mu(x)\bigg)^{1\over p'}
{\widehat w}(Q)^{1\over p}\nonumber\\
&\leq \sup_{\|h\|_{L^{p'}_{{\widehat w}'}}\leq 1 }\sum_{Q\supset Q_N}{1\over \mu(Q)}\bigg({1\over \lambda'_2(Q_N)}\int_{Q_{N}}|b(y)-b_{Q_{N}}|^{p_1'}\lambda'_1(y)d\mu(y)\bigg)^{1\over p_1'}\nonumber\\
&\qquad\times \|f\|_{{L^{p_1}_{\lambda_1}}(X)}\frac{1}{\mu(Q)}\|g\|_{{L^{p_2}_{w}}(X)}{w'}(Q)^{1\over p_2'}\|h\|_{L^{p'}_{{\widehat w}'}}\lambda'_2(Q_N)^{1\over p_1'}{\widehat w}(Q)^{1\over p}\nonumber.
\end{align}

Observe that since $Q\supset Q_N$ and thus $l(Q)>N$,
$$
 \left(\frac{1}{\lambda'_2(Q)}\int_{Q}|b(x)-b_{Q}|^{p_1'}\lambda'_1(x)d\mu(x)\right)^{\frac{1}{p_1'}}<\epsilon.
$$
Recall that  there exists some $\sigma>0$ such that $\lambda_2 \in A_{p_1-\sigma}$ as $\lambda_2 \in A_{p_1}$, and that
\begin{equation}
  {\lambda_2(Q)\over \lambda_2(Q_N) }  \leq \bigg({\mu(Q)\over \mu(Q_N)}\bigg)^{p_1-\sigma}[\lambda_2]_{A_{p_1}}.
\end{equation}
And noting that $1={p\over p_1}+{p\over p_2}$, by H\"older's inequality we have
$$
{\widehat w}(Q)^{1\over p}=\left(\int_Q \lambda_2^{p\over p_1}w^{p\over p_2}d\mu\right)^{1\over p}
\leq \left(\int_Q \lambda_2d\mu\right)^{1\over p_1} \left(\int_Q wd\mu\right)^{1\over p_2}=\lambda_2(Q)^{1\over p_1} w(Q)^{1\over p_2}.
$$

So we have
\begin{align}
    \|II\|_{L^p_{\widehat w}(X)} &\leq \epsilon \|f\|_{{L^{p_1}_{\lambda_1}}(X)}\|g\|_{{L^{p_2}_{w}}(X)}\sum_{Q\supset Q_N}{\lambda'_2(Q_N)^{1\over {p_1}'}\lambda_2(Q_N)^{1\over p_1}\over \mu(Q_N)}{\mu(Q_N)\over \mu(Q) }{\lambda_2(Q)^{1\over p_1}\over \lambda_2(Q_N)^{1\over p_1}}\nonumber\\
    &\hskip3cm \times \frac{{\widehat w}(Q)^{1\over p}}{\lambda_2(Q)^{1\over p_1} w(Q)^{1\over p_2}} \frac{w'(Q)^{1\over p_2'}w(Q)^{1\over p_2}}{\mu(Q)}\nonumber\\
    &\leq \epsilon  \|f\|_{{L^{p_1}_{\lambda_1}}(X)}\|g\|_{{L^{p_2}_{w}}(X)}\sum_{Q\supset Q_N}[\lambda_2]_{A_{p_1}}^{1\over p_1} {\mu(Q_N)\over \mu(Q) }\bigg({\mu(Q)\over \mu(Q_N) }\bigg)^{p_1-\sigma\over p_1}[\lambda_2]_{A_{p_1}}^{1\over p_1}[w]_{A_{p_2}}^{1\over p_2}\nonumber\\
     &\leq \epsilon \|f\|_{{L^{p_1}_{\lambda_1}}(X)}\|g\|_{{L^{p_2}_{w}}(X)}[\lambda_2]_{A_{p_1}}^{2\over p_1}[w]_{A_{p_2}}^{1\over p_2} \sum_{Q\supset Q_N}\bigg({\mu(Q_N)\over \mu(Q)}\bigg)^{\sigma \over p_1}\nonumber\\
     &\leq \epsilon  \|f\|_{{L^{p_1}_{\lambda_1}}(X)}\|g\|_{{L^{p_2}_{w}}(X)}[\lambda_2]_{A_{p_1}}^{2\over p_1}[w]_{A_{p_2}}^{1\over p_2}.\nonumber
\end{align}

This gives the control for the norm of $II$.

Let us now prove the control for the norm of $I$, observe that
\begin{align}
I &= \sum_{Q\supset Q_N}\bigg(\sum_{i=1}^{\tau_Q}\sum_{k=1}^{M_{Q_i}}{1\over \mu(Q)}\int_{Q_{i,k}}|b(y)-b_{Q_{i,k}}||f(y)|d\mu(y)\bigg)g_Q\chi_{Q}(x) \\
 &= \sum_{k=1}^{M_{Q_i}}\sum_{Q\supset Q_N}\bigg(\sum_{i=1}^{\tau_Q}{1\over \mu(Q)}\int_{Q_{i,k}}|b(y)-b_{Q_{i,k}}||f(y)|d\mu(y)\bigg)g_Q\chi_{Q}(x).\nonumber
\end{align}

Fixing $k$, then for each $Q_{i,k}$ where $i = 1,.....,\tau_{Q}$, following similar approach in the estimate for $T_3(f,g)$, we write $|b-b_{Q_{i,k}}|$ as in \eqref{estimateForb-bQ}. Since $Q_{i,k}\cap Q_N = \emptyset$ and $R \subset Q_{i,k}$, we have for all $R \in \mathcal{\tilde{S}}$ in \eqref{estimateForb-bQ}, $R\cap Q_N = \emptyset$. Now according to Definition~\ref{lemvmo}, we have that
${1\over\nu(R)} \int_R|b(x)-b_R|d\mu(x) \leq \epsilon$.

So following similar proof as the proof for control of the norm of $T_3(f,g)(x)$ as showed in equations \eqref{t3proof7} and \eqref{finalt3c7}, we obtain the control for the norm of $I$ for some $\epsilon>0$, i.e.,

\begin{equation}\label{Ic7}
    \|I\|_{L^p_{\widehat w}(X)} \leq  \epsilon[\overrightarrow{w}]_{A_{\overrightarrow{p}}}^{\alpha}([\lambda_1]_{A_{p_1}})^{\max\{1,{1\over p_1-1}\}}\|f\|_{L^{p_1}_{\lambda_1}(X)}\|g\|_{L^{p_2}_{w}(X)}.
\end{equation}

We now turn to  the estimates for the norm of $III$ and $IV$.
%
Observe for each fixed $k$, for each $Q_{i,k}$ we will obtain the same estimate independent of the cube $Q_{i,k}$, as the control for the following norm
\begin{align}\label{termc7}
\|A_{Q_{i,k}}\|_{L^p_{\widehat w}(X)}
&\leq  \epsilon[\overrightarrow{w}]_{A_{\overrightarrow{p}}}^{\alpha}([\lambda_1]_{A_{p_1}})^{\max\{1,{1\over p_1-1}\}}\|f\|_{L^{p_1}_{\lambda_1}(X)}\|g\|_{L^{p_2}_{w}(X)},
\end{align}
where
$$ A_{Q_{i,k}}(x) =\sum_{Q\supset Q_N}\bigg(\sum_{i=1}^{\tau_Q}|b_{Q_{i,k}}- b_Q|{1\over \mu(Q)}\int_{Q_{i,k}}|f(y)|d\mu(y)\bigg)g_Q\chi_{Q}(x).$$

Then the estimates of the norms of $III$ and $IV$ follow from \eqref{termc7}, since the same estimate holds for each $Q_{i,k}$ where $k\in \{1,2,...,M_{Q_i}\}$.

Thus, it suffices to prove \eqref{termc7}. Recall the definition of $Q_i$ and $Q_{i,k}$: $Q =  Q_1 \supset Q_2 \supset Q_3 \supset Q_4...... Q_{\tau_{Q}+1} = Q_N$,
where $Q_i$ is the ``parent" of $Q_{i+1}$, $i= 1,2, \ldots, \tau_{Q}$. The collection $\{Q_i\}_i$ are sparse. For each $Q_i$, $i= 1,2, \ldots, \tau_{Q}$, we denote all its dyadic children except $Q_{i+1}$ by $Q_{i,k}$, $k = 1,2.....,M_{Q_i}$. By using \eqref{controlgenterm} we have that
\begin{align*}
    |b_{Q_{i,k}}-b_{Q}|\leq C\epsilon\sum_{j=1}^{i-1}{\nu(Q_j)\over \mu(Q_{j})},
\end{align*}
which implies
\begin{align}\label{termcproof7}
A_{Q_{i,k}}
&\leq C\sum_{Q\supset Q_N}\sum_{i=1}^{\tau_Q}\sum_{j=1}^{i-1}\epsilon{\nu(Q_j)\over \mu(Q_{j})}{1\over \mu(Q)}\int_{Q_{i,k}}|f(y)|d\mu(y)g_Q\chi_{Q}(x).
\end{align}

Hence we have
\begin{align}\label{controlforiii7}
&\|A_{Q_{i,k}}\|_{L^p_{\widehat w}(X)} \\
&\leq \bigg\|\sum_{Q\supset Q_N}\sum_{i=1}^{\tau_Q}\sum_{j=1}^{i-1}\epsilon{\nu(Q_j)\over \mu(Q_{j})}{1\over \mu(Q)}\int_{Q_{i,k}}|f(y)|d\mu(y)g_Q\chi_{Q}(x)\bigg\|_{L^p_{\widehat w}(X)}\nonumber\\
& \leq \sup_{h\in L^{p'}_{{\widehat w}'}(X)}\bigg|\bigg\langle \sum_{Q\supset Q_N}\sum_{i=1}^{\tau_Q}\sum_{j=1}^{i-1}\epsilon{\nu(Q_j)\over \mu(Q_{j})}{1\over \mu(Q)}\int_{Q_{i,k}}|f(y)|d\mu(y)g_Q\chi_{Q}(x), h(x)\bigg\rangle \bigg|\nonumber\\
&\leq \epsilon \bigg(\sum_{Q\supset Q_N}\sum_{i=1}^{\tau_Q}\sum_{j=1}^{i-1}\bigg({\nu(Q_j)\over \mu(Q_{j})}\bigg)^{p_1}\nonumber\\
&\qquad\times\bigg(\int_{Q_{i,k}}|f(y)|d\mu(y)\bigg)^{p_1}{1\over \mu(Q)^{p_1}}\bigg({\mu(Q)\over \mu(Q_{j,k})}\bigg)^{{p_1}\sigma'}{w}(Q)^{-\frac{p_1}{p_2}}{\widehat w}(Q)^{(1-\frac{1}{p'})p_1}\bigg)^{1\over {p_1}} \nonumber\\
&\qquad\times \bigg(\sum_{Q\supset Q_N}\sum_{i=1}^{\tau_Q}\sum_{j=1}^{i-1}\bigg({\mu(Q_{i,k})\over \mu(Q)}\bigg)^{{p_2}\sigma'/2}\bigg(\frac{1}{\mu(Q)}\int_{Q}|g(y)|d\mu(y)\bigg)^{{p_2}}{ w}(Q)\bigg)^{1\over {p_2}} \nonumber\\
&\qquad\times \bigg(\sum_{Q\supset Q_N}\sum_{i=1}^{\tau_Q}\sum_{j=1}^{i-1}\bigg({\mu(Q_{i,k})\over \mu(Q)}\bigg)^{p'\sigma'/2}\bigg(\int_{Q}|h(y)|d\mu(y)\bigg)^{p'}({\widehat w}(Q))^{-p'}{\widehat w}(Q)\bigg)^{1\over p'} \nonumber\\
& \leq \epsilon A^{1\over {p_1}}B^{1\over p_2}D^{1\over p'},\nonumber
\end{align}
where $\sigma' = {\sigma \over 2p}$. Now observe that
\begin{align}\label{controlD7}
    & D \leq \sum_{Q\supset Q_N}\bigg[\sum_{i=1}^{\tau_Q} \log\bigg({\mu(Q)\over \mu(Q_i)}\bigg)\bigg({\mu(Q_i)\over \mu(Q)}\bigg)^{p'\sigma'}\bigg]\bigg({1\over {\widehat w}(Q)}\int_{Q}|h(y)|d\mu(y)\bigg)^{p'}{\widehat w}(Q)\\
    &\leq C_1C_2 \sum_{Q\supset Q_N}\inf_{x\in Q}\mathcal{M}_{{\widehat w}}^{p'}(|h|{\widehat w}^{-1})(x){\widehat w}(E(Q))\nonumber\\
    &\leq C_1C_2 \sum_{Q\supset Q_N}\int_{E(Q)}\mathcal{M}_{{\widehat w}}^{p'}(|h|{\widehat w}^{-1})(x){\widehat w}(x)d\mu(x)\nonumber\\
    &\leq C_1C_2 \int_{\mathbb{R}^n}\mathcal{M}_{{\widehat w}}^{p'}(|h|{\widehat w}^{-1})(x){\widehat w}(x)d\mu(x)\nonumber\\
    &\leq C_1C_2 \|h{\widehat w}^{-1}\|_{L^{p'}_{{\widehat w}}(X)}^{p'}\nonumber\\
     &= C_1C_2 \|h\|_{L^{p'}_{{\widehat w}'}(X)}^{p'},\nonumber
\end{align}
where we use the facts that $$\sum_{i=1}^{\tau_Q} \log\bigg({\mu(Q)\over \mu(Q_i)}\bigg)\bigg({\mu(Q_i)\over \mu(Q)}\bigg)^{p'\sigma'} \leq C_1$$ with $C_1$ an absolute positive constant,
 that
$$
\mu(E(Q))=\int_{E(Q)}  {\widehat w}^{1\over 2p} \lambda_2^{-\frac{p}{p_1}\frac{1}{2p}} w^{-\frac{p}{p_2}\frac{1}{2p}}d\mu
\leq {\widehat w}(E(Q))^{1\over 2p} \lambda'_2(E(Q))^{1\over 2p'_1} w'(E(Q))^{1\over 2p'_2}
$$
and that
$$
{{\widehat w}(Q)\over {\widehat w}(E(Q))} \leq \frac{{\widehat w}(Q) \lambda'_2(E(Q))^{p\over p'_1} w'(E(Q))^{p\over p'_2}}{\mu(E(Q))^{2p}}\leq C[\lambda_2]^{\frac{1}{p_1}}_{A_{p_1}} [w]^{\frac{1}{p_2}}_{A_{p_2}}.
$$
Here $E(Q)$ is the measurable subset of $Q$ in some $\eta$ Sparse collection of cubes $\mathcal{S}$ such that $\mu(E_Q)\geq \eta \mu(Q)$.

Similarly we can estimate $B$ term as
\begin{align}\label{controlb7}
    & B \leq \sum_{Q\supset Q_N}\bigg[\sum_{i=1}^{\tau_Q} \log\bigg({\mu(Q)\over \mu(Q_i)}\bigg)\bigg({\mu(Q_i)\over \mu(Q)}\bigg)^{p_2\sigma'}\bigg]\bigg(\frac{1}{\mu(Q)}\int_{Q}|g(y)|d\mu(y)\bigg)^{{p_2}}{ w}(Q)\\
    &\leq C_1C_2 \sum_{Q\supset Q_N}\inf_{x\in Q}\mathcal{M}^{p_2}(|g|)(x){ w}(E(Q))\nonumber\\
    &\leq C_1C_2 \sum_{Q\supset Q_N}\int_{E(Q)}\mathcal{M}^{p_2}(|g|)(x){ w}(x)d\mu(x)\nonumber\\
    &\leq C_1C_2 \int_{\mathbb{R}^n}\mathcal{M}^{p_2}(|g|)(x){ w}(x)d\mu(x)\nonumber\\
    &\leq C_1C_2 [w]_{A_{p_2}}\|g\|_{L^{p_2}_{{w}}(X)}^{p_2}.\nonumber
\end{align}

We also have the following estimate for $A$
\begin{align}\label{controla7}
A&\leq \sum_{Q\supset Q_N}\sum_{i=1}^{\tau_Q}\sum_{j=1}^{i-1}{\lambda_1{(Q_j)}\lambda'_2{(Q_j)}^{{p_1}-1} \over \mu(Q_j)^{p_1}}\|f\|_{L^{{p_1}}_{\lambda_1}(Q_{i,k})}^{{p_1}}\lambda'_1(Q_{i,k})^{{p_1}-1}{1\over \mu(Q)^{p_1}}\bigg({\mu(Q)\over \mu(Q_{i,k})}\bigg)^{{p_1}\sigma'}\lambda_2(Q)\nonumber\\
&\leq \sum_{Q\supset Q_N}\sum_{i=1}^{\tau_Q}\sum_{j=1}^{i-1}{\lambda_1{(Q_i)}\lambda'_1{(Q_i)}^{{p_1}-1} \over \mu(Q_i)^{p_1}}{\lambda_1(Q_j)\over \lambda_1(Q_i)}\bigg({\mu(Q_i)\over \mu(Q_j)}\bigg)^{{p_1}}{\lambda_2{(Q_j)}\lambda'_2{(Q_j)}^{{p_1}-1} \over \mu(Q_j)^{p_1}}\nonumber\\
&\qquad\times{\lambda_2(Q)\over \lambda_2(Q_j)}\bigg({\mu(Q_j)\over \mu(Q)}\bigg)^{{p_1}}\bigg({\mu(Q)\over \mu(Q_{i,k})}\bigg)^{{p_1}\sigma'}\|f\|_{L^{{p_1}}_{\lambda_1}(Q_{i,k})}^{{p_1}}\nonumber\\
&\leq \sum_{Q\supset Q_N}\sum_{i=1}^{\tau_Q}\sum_{j=1}^{i-1}[\lambda_1]_{A_{p_1}}^{2}[\lambda_2]_{A_{p_1}}^{2}\nonumber\\
&\qquad\times\bigg({\mu(Q_j)\over\mu(Q_i)}\bigg)^{{p_1}-\sigma}\bigg({\mu(Q_i)\over\mu(Q_j)}\bigg)^{{p_1}}\bigg({\mu(Q)\over\mu(Q_j)}\bigg)^{{p_1}-\sigma}\bigg({\mu(Q_j)\over\mu(Q)}\bigg)^{{p_1}}\bigg({\mu(Q)\over \mu(Q_{i,k})}\bigg)^{{p_1}\sigma'}\|f\|_{L^{{p_1}}_{\lambda_1}(Q_{i,k})}^{{p_1}}\nonumber\\
&\leq [\lambda_1]_{A_{p_1}}^{2}[\lambda_2]_{A_{p_1}}^{2}\sum_{i=1}^{\infty}\sum_{Q\supset Q_i}\log\bigg({\mu(Q)\over\mu(Q_i)}\bigg)\bigg({\mu(Q_i)\over \mu(Q)}\bigg)^{\sigma-{p_1}\sigma'}\|f\|_{L^{{p_1}}_{\lambda_1}(Q_{i,k})}^{{p_1}}\nonumber\\
&\leq [\lambda_1]_{A_{p_1}}^{2}[\lambda_2]_{A_{p_1}}^{2}\|f\|_{L^{{p_1}}_{\lambda_1}(X)}^{{p_1}}.
\end{align}

Then \eqref{termc7} follows from  \eqref{controlb7}, \eqref{controla7} and \eqref{controlforiii7}.
This completes the proof of the compactness of $\mathcal T_{\mathcal S,b}^{B,*}$.

Now let us consider $\mathcal T_{\mathcal S,b}^{B}$. For $p>1$, the compactness of $\mathcal T_{\mathcal S,b}^{B}$ can be archived by the dual argument and the result of $\mathcal T_{\mathcal S,b}^{B,*}$.
For $1/2<p\leq 1$, note that
\begin{align*}
\|\mathcal T_{\mathcal S,b}^{B}(f,g)\|_{L^p({\widehat w})}^p& =\int_X \bigg(\sum_{Q\in \mathcal S} |b(x)-b_Q|f_Q g_Q \chi_Q(x)\bigg)^p {\widehat w}(x)d\mu(x)\\
&\leq \sum_{Q\in \mathcal S}\int_X \left( |b(x)-b_Q|f_Q g_Q \chi_Q(x)\right)^p {\widehat w}(x)d\mu(x)\\
&=\sum_{Q\in \mathcal S} f^p_Q g^p_Q  \int_Q  (|b(x)-b_Q|)^p \lambda_2^{p\over p_1}(x) w^{p\over p_2}(x)d\mu(x)\\
&\leq \sum_{Q\in \mathcal S} f^p_Q g^p_Q  \left(\frac{1}{\lambda_1(Q)}\int_Q  (|b(x)-b_Q|)^{p_1} \lambda_2(x) d\mu(x)\right)^{p\over p_1} \lambda_1(Q)^{p\over p_1}w(Q)^{p\over p_2}
\end{align*}
and
\begin{align*}
\sum_{Q\in \mathcal S} f^p_Q g^p_Q \lambda_1(Q)^{p\over p_1}w(Q)^{p\over p_2} &\leq \bigg(\sum_{Q\in \mathcal S} f^{p_1}_Q  \lambda_1(Q)\bigg)^{p\over p_1} \bigg(\sum_{Q\in \mathcal S} g^{p_2}_Q w(Q)\bigg)^{p\over p_2}\\
&\leq \|\mathcal M f\|^p_{L^{p_1}_{\lambda_1}} \|\mathcal M g\|^p_{L^{p_2}_{w}}\\
&\leq C[\lambda_1]^{1\over p}_{A_{p_1}} [\lambda_2]^{1\over p}_{A_{p_2}} \|f\|^p_{L^{p_1}_{\lambda_1}} \|g\|^p_{L^{p_2}_{w}}.
\end{align*}
Thus we can follow the similar argument for linear case or $\mathcal T_{\mathcal S,b}^{B,*}$, just noting that when the sidelength of $Q$ is small enough or large enough or $Q$ is far away, 
$$\left(\frac{1}{\lambda_1(Q)}\int_Q  (|b(x)-b_Q|)^{p_1} \lambda_2(x) d\mu(x)\right)^{1\over p_1}<\epsilon$$ 
for any given $\epsilon >0$.

This completes the proof of the whole theorem.

\bigskip
\bigskip

{\bf Acknowledgments:} P. Chen is supported by NNSF of China, Grant No. 12171489 and Guangdong Natural Science Foundation 2022A1515011157. Lacey is a 2020 Simons Fellow, and is supported in part by grant  from the US National Science Foundation, DMS-1949206.
J. Li is supported by ARC DP 220100285.

\bigskip

\medskip

\vspace{0.3cm}

Peng Chen, Department of Mathematics, Sun Yat-sen University, Guangzhou, 510275, China.

\smallskip

{\it E-mail}: \texttt{chenpeng3@mail.sysu.edu.cn}

\vspace{0.3cm}

Michael Lacey, School of Mathematics, Georgia Institute of Technology, Atlanta GA 30332, USA
{\it E-mail}: \texttt{lacey@math.gatech.edu}

\vspace{0.3cm}

Ji Li, School of Mathematical and Physical Sciences, Macquarie University, NSW, 2109, Australia.

\smallskip

{\it E-mail}: \texttt{ji.li@mq.edu.au}

\vspace{0.3cm}

Manasa N. Vempati, Department of Mathematics, Louisiana State University, Baton Rouge, LA, USA.

\smallskip
{\it E-mail}: \texttt{nvempati@lsu.edu}

\end{document}